\newcommand{\E}{\mathbb E}
\newcommand{\R}{\mathbb R} 
\newcommand{\C}{\mathcal C}
\newcommand{\A}{\mathcal A} 
\newcommand{\n}{\mathbb N} 
\newcommand{\N}{\mathbb N}
\newcommand{\dd}{\text{d}}
\newcommand{\X}{\mathbf{X}}
\newcommand{\po}{\left(}
\newcommand{\pf}{\right)}
\DeclareMathOperator \ch {ch} 
\DeclareMathOperator \sh {sh} 
      \theoremstyle{plain}
\newtheorem{theorem}{Theorem}
\newtheorem{proposition}{Proposition}
\newtheorem{lemma}{Lemma}
\newtheorem{remark}{Remark}
\numberwithin{equation}{section} 
\numberwithin{lemma}{section} 
\numberwithin{remark}{section} 
\numberwithin{example}{section}
\numberwithin{proposition}{section}
\date{}
\author{Lucas Journel\footnote{Institut de Mathématiques, Université de Neuchâtel, 11 Rue Emile-Argand, 2000 Neuchâtel, Suisse.\linebreak \textit{Email: }lucas.journel[AT]unine.ch},$\quad $ Pierre Le Bris\footnote{Institut des Hautes Etudes Scientifiques, 35 route de Chartres, CS 40001  91893 Bures-sur-Yvette, France.\linebreak \textit{Email: }lebris[AT]ihes.fr}}
\title{On uniform in time propagation of chaos in metastable cases: the Curie-Weiss model}
\begin{document}

\maketitle

\begin{abstract}
Many low temperature particle systems in mean-field interaction are ergodic with respect to a unique invariant measure, while their (non-linear) mean-field limit may possess several steady states. In particular, in such cases, propagation of chaos (i.e. the convergence of the particle system to its mean-field limit as $n$, the number of particles, goes to infinity) cannot hold uniformly in time since the long-time behaviors of the two processes are a priori incompatible.

However, the particle system may be metastable, and the time needed to exit the basin of attraction of one of the steady states of its limit, and go to another, is exponentially (in $n$) long. Before this exit time, the particle system reaches a (quasi-)stationary distribution, which we expect to be a good approximation of the corresponding non-linear steady state.

Our goal is to study the typical metastable behavior of the empirical measure of such mean-field systems, starting in this work with the Curie-Weiss model. We thus show uniform in time propagation of chaos of the spin system conditioned to keeping a positive magnetization.
\end{abstract}

%
%
%
%

\section{Introduction}

\subsection{About metastability and propagation of chaos}

Consider a system of $n$ particles in mean-field interaction. In some low temperature cases, on which we  say more later, the  system is ergodic with respect to a unique invariant measure, while its (non-linear) mean-field limit possesses several steady states. Even though this should a priori prevent the convergence of the particle system to its limit (i.e. as $n\rightarrow\infty$) uniformly in time, in those cases, the particle system is metastable, and the time needed to exit a  basin of attraction of one of the steady states of its limit, and go to another, is exponentially (in $n$) long. Before this exit time, the particle system reaches a (quasi-)stationary distribution, which we expect to still be a good approximation of the corresponding non-linear steady state. The goal of this article is to give a proof of concept for this idea on a toy model.

For a given mean-field particle system $\X_t = (X_t^1,\dots,X_t^n)$, we define its empirical measure as 
\begin{equation}
    \pi(\X_t) = \frac{1}{n}\sum_{i=1}^n \delta_{X^i_t}.
\end{equation}
This is a probability measure that counts the number of particles in any given set. As $n$ goes to infinity, we expect the empirical measure to converge to the solution of a non-linear PDE of the form
\begin{equation}\label{eq:non-linear-limit-gen}
\partial_t\rho = L_\rho^*\rho,
\end{equation}
provided $\pi(\X_0)$ converges to $\rho_0$ and the particle system is exchangeable. This is the propagation of chaos phenomenon, which has been extensively studied (see \cite{Kac56, McK66, Szn91, Mel96} for some historical milestones), and we refer to \cite{CD22-1,CD22-2} for a recent in-depth review. As we have said, a sine qua non for uniform in time propagation of chaos is the stability of the non-linear limit, in the sense that there must be at most one solution to
\begin{equation}\label{eq:steady-states-gen}
L_\rho^*\rho =0    
\end{equation}
and, if there is a solution to \eqref{eq:steady-states-gen}, that any solution of~\eqref{eq:non-linear-limit-gen} should converge to this steady state. In generic cases, this would imply that the particles system converges to its own invariant distribution at a rate which is independent of the number of particles. 

In this work, we are interested in the case where~\eqref{eq:steady-states-gen} admits several solutions, and some of them are locally stable. In this case, the propagation of chaos could still hold, but would not be uniform in time. The idea is that, while the solution of the non-linear limit would not leave the basin of attraction it started in, the empirical measure of the particle system could still go from one basin to another, though in a time that is exponentially long in $n$. This can be shown via the Eyring-Kramers formula, see for instance \cite[Chap. 13]{BdH15} for the Curie-Weiss model. Before exiting such a basin, the law of the empirical measure can converge towards a quasi-stationary distribution (QSD). This is known as metastability, and is a well-documented phenomenon for many stochastic processes (see for instance~\cite{LMS21,AKT23,BG16,DgLLpN20,DgLLpN22}). To be more precise, denoting $\mathcal{P}$ the set of probability measures on the space we consider, let us define some basin $B_{\nu}$,
\[
B_{\nu} = \left\{ \rho_0 \in \mathcal P, \, \lim_{t\rightarrow\infty}\rho = \nu\right\}
\]
where $\rho$ is the solution to~\eqref{eq:non-linear-limit-gen} with initial condition $\rho_0$, for a given solution $\nu\in \mathcal{P}$ to~\eqref{eq:steady-states-gen}, and write, for some large enough sub-domain $D\subset B_{\nu}$
\[
\tau_n = \inf\left\{t\geqslant 0, \pi(\X_t) \notin D \right\}.
\]
Then the QSD, defined as the limit
\begin{equation}\label{eq:def-QSD-intro}
\nu^n_\infty = \lim_{t\rightarrow\infty} \mathcal Law(\pi(\X_t)|\tau_n>t),
\end{equation}
corresponds to the law of $\pi(\X_t)$ for large time scales, however smaller than the typical exit time. In particular, it is expected that 
\[
\E_{\mu\sim\nu^n_\infty}\po \mu(f) \pf \simeq \nu(f)
\] 
in the limit $n\rightarrow\infty$, as the typical exit time goes to infinity.
This article, which studies the case of the Curie-Weiss model, is the first in a line of works which aims at showing that the evolution of such particle systems inside a basin of attraction can be described uniformly in time by the non-linear limit, as shown by a convergence of the form
\begin{align*}
\sup_{t\geqslant 0}\left|\E\left(\frac{1}{n}\sum_{i=1}^nf(X_t^i)\middle|\tau_n>t\right)-\rho_t(f)\right|\xrightarrow[n\rightarrow \infty]{} 0.
\end{align*}
Note that the domain of attraction of an equilibrium is usually unknown, so that we cannot take $D=B_{\nu}$. Even if $B_{\nu}$ is known explicitly (as it is the case for the Curie-Weiss model), for technical reasons, we shall consider sub-domains. However, for our result to hold, $D$ must be also a metastable set for the dynamic of the empirical measure.

%
%
%
%

\subsection{The Curie-Weiss model and main results}

Let us now introduce the particular model that we are interested in, the Curie-Weiss model. Fix $n\in\n$ and consider a set of $n$ individuals. To each individual is associated a spin (or opinion), denoted  ${X^{i,n}_t\in\{-1,1\}}$, that evolves with time $t\geqslant 0$. Let $\textbf{X}_t=(X^{1,n}_t,...,X^{n,n}_t)$ and let $\beta>0$ be a parameter, called the inverse temperature, that measures the tendency of each individual to change spin. Finally, denote $\mathcal{S}_n=\{-1,1\}^n$ the set of spin configurations. We consider a \textit{spin-flip dynamics} on $\mathcal{S}_n$, commonly known as Glauber dynamics, according to which, any time $t$, the system may switch its i-th spin at rate 
\[
\exp\left(-\frac{\beta}{n} X^{i,n}_t\sum_{j=1}^nX^{j,n}_t\right).
\]
This creates a dynamics where the spins try and align themselves with the others.
Denote the empirical measure
\begin{equation*}
    \pi^n_t=\frac{1}{n}\sum_{i=1}^n\delta_{X^{i,n}_t},
\end{equation*}
which, because of the binary nature of the state space, can be identified with the empirical mean, also known as the \textit{magnetization} of the system, 
\begin{equation*}
    m_t^n:=\pi_t^n(x\mapsto x) = \frac{1}{n}\sum_{i=1}^nX^{i,n}_t.
\end{equation*}
From the spin-flip dynamics, we obtain that this quantity is a Markov chain on the finite state space
\begin{align*}
    E_n=\left\{x\ :\ \exists i\in\{0,...,n\}, x=-1+\frac{2i}{n}\right\},
\end{align*}
with transition rates for the empirical measure, or equivalently for the magnetization, given by
\begin{align*}
    \lambda_+^n(m)=n\frac{1-m}{2}e^{\beta m}\ \ \ \text{ and }\lambda_-^n(m)=n\frac{1+m}{2}e^{-\beta m},
\end{align*}
so that the generator of the process $m_t^n$ is
\begin{equation}\label{eq:gen_mag}
    \mathcal{A}_nf(m)=\lambda_+^n(m)\left(f\left(m+\frac{2}{n}\right)-f\left(m\right)\right)+\lambda_-^n(m)\left(f\left(m-\frac{2}{n}\right)-f\left(m\right)\right).
\end{equation}
Formally, we can check that, as $n\rightarrow\infty$, the generator $\mathcal A_n$ converges towards
\[
\A f(m) = \left((1-m)e^{\beta m}-(1+m)e^{-\beta m}  \right)f'(m) = -g'(m)f'(m),
\]
where
\begin{equation}\label{eq:pot-CW}
g(m) = C - \frac{1}{\beta}\left(1+\frac{1}{\beta} + m\right)e^{-\beta m} - \frac{1}{\beta}\left(1+\frac{1}{\beta} - m\right)e^{\beta m},
\end{equation}
for $-1\leqslant m\leqslant 1$, and the constant $C\geqslant0$ above is chosen such that $\min g=0$. In particular, the propagation of chaos for the Curie-Weiss model reads as the convergence of the magnetization $m^n$ to the solution of 
\begin{equation}\label{eq:non-linear-limit}
    \frac{d}{dt}\overline m_t=-g'(\overline{m}_t)=(1-\overline m_t)e^{\beta \overline m_t}-(1+\overline m_t)e^{-\beta \overline m_t},
\end{equation}
see Proposition~\ref{prop:PoCnonUnif} below. The solution of this ordinary differential equation (ODE) is denoted $\overline m_t$ in all of this work. The drift $g$ acts as a confining potential and one can check that, if $\beta\leqslant 1$, $g$ admits a unique minimizer at $m=0$, and if $\beta>1$, $g$ admits a minimum reached in $m_+$ and $m_-$, which satisfy $m_-=-m_+$, as well as one unique additional critical point at $m=0$, see Figure~\ref{fig:potentiel_g} or~\cite{BdH15}. Hence, at low temperature ($\beta>1$), for all $\overline m_0>0$, $\overline m_t$ converges to $m_+$. However, $m^n_t$, for all $t>0$, has a positive density on the entire state space $E_n$, which prevents the uniformity in time in the convergence of $m^n$ to $\overline m_t$, as made precise again in Proposition~\ref{prop:PoCnonUnif}. 


\begin{figure}
	\begin{subfigure}{.5\textwidth}
    	\centering
	\includegraphics[width=\linewidth,,height=0.75\linewidth]{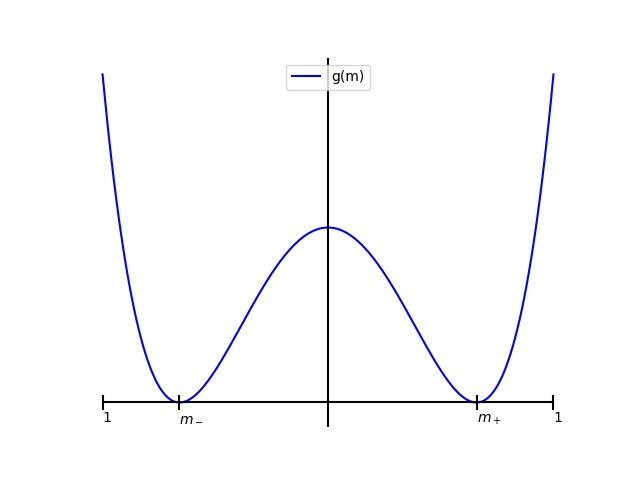}
	 \end{subfigure}
	 \begin{subfigure}{.5\textwidth}
    	\centering
   	\includegraphics[width=\linewidth,,height=0.75\linewidth]{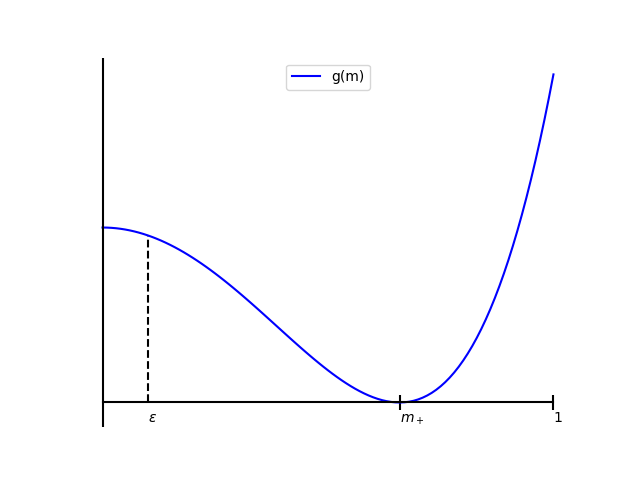}
	 \end{subfigure}
	\caption{\textbf{Left:} Potential $g$ defined in \eqref{eq:pot-CW} for $\beta=1.2$. There are two minimizers denoted $m_-$ and $m_+$. \textbf{Right:} The process is conditioned to staying in $[\varepsilon,1]$.}
 \label{fig:potentiel_g}
\end{figure}

In order to state our results, let us now fix a few notations. For a random variable $m$, we define $\mathcal{L}(m)$ the probability distribution of $m$. Additionally, for an event $A$, we define  $\mathcal{L}(m\big|A)$ the law of $m$ conditionally on the event $A$. We similarly define $\mathbb{E}\left(\cdot\big| A\right)$ the conditional expectation. For a random process $m:[0,\infty[\mapsto[-1,1]$, we define $\mathbb{E}_{\nu}$ the expectation with respect to the process $m$ with initial distribution $m_0\sim\nu$. We similarly define $\mathbb{P}_{\nu}$. Whenever $\nu$ is a Dirac mass at a point $x$, we may, with a slight abuse of notations, write $\mathbb{E}_{x}=\mathbb{E}_{\delta_x}$.

Let us also define (and give some classical results on) the various distances between probability measures that we use throughout the article. For $\mu,\nu\in\mathcal{P}(\mathbb{R})$, denote $\Pi\left(\mu,\nu\right)$ the set of probability measures $\pi\in\mathcal{P}(\mathbb{R}\times\mathbb{R})$ with marginal distributions $\mu$ and $\nu$. We define the \textit{Wasserstein} (1 and 2) and the \textit{Total Variation} distances by
\begin{align}
\mathcal{W}_1\left(\mu,\nu\right):=&\inf_{\pi\in \Pi\left(\mu,\nu\right)}\int_\mathbb{R}|x-y|d\pi(x,y)=\inf_{(X,Y)\sim \pi,\ \pi\in \Pi\left(\mu,\nu\right)}\mathbb{E}|X-Y|\nonumber\\
=&\sup_{f : \|f\|_{\textrm{lip}}\leqslant 1}\int f(x)(d\mu-d\nu)(x),\nonumber\\
\mathcal{W}_2\left(\mu,\nu\right):=&\left(\inf_{\pi\in \Pi\left(\mu,\nu\right)}\int_\mathbb{R}|x-y|^2d\pi(x,y)\right)^{1/2}=\inf_{(X,Y)\sim \pi,\ \pi\in \Pi\left(\mu,\nu\right)}\mathbb{E}\left(|X-Y|^2\right)^{1/2},\nonumber\\
d_{TV}(\mu,\nu):=&\sup_{f : \|f\|_{\infty}\leqslant 1}\int f(x)(d\mu-d\nu)(x).\label{eq:def_distances}
\end{align}
The equivalence between the given definitions for each distance is classical and can be found for instance in \cite{Vil09}.
We may now state the first convergence results.


\begin{proposition}[Finite time PoC] \label{prop:PoCnonUnif}
Denote $\mathcal{L}(m_t^n)$ (resp. $\mathcal{L}(\overline{m}_t)$) the law of the process $m_t^n$ (resp. $\overline{m}_t$ for which, even though it is deterministic, we allow for a random initial condition $\overline{m}_0$). There exist $c,C>0$ such that for all $t\geqslant 0$ and $n\in\mathbb{N}$
\begin{equation}\label{eq:PoC_pas_unif}
\mathcal{W}_2(\mathcal{L}(m_t^n),\mathcal{L}(\overline{m}_t))\leqslant e^{ct}\left(\mathcal{W}_2(\mathcal{L}(m_0^n),\mathcal{L}(\overline{m}_0))+\frac{C}{\sqrt{n}}\right).
\end{equation}
Furthermore, if $\beta<1$, there exist $c,C>0$ such that for all $t\geqslant0$ and for all $n\in\mathbb{N}$
\begin{align*}
\mathcal{W}_2(\mathcal{L}(m_t^n),\mathcal{L}(\overline{m}_t))\leqslant e^{-ct}\mathcal{W}_2(\mathcal{L}(m_0^n),\mathcal{L}(\overline{m}_0))+\frac{C}{\sqrt{n}},
\end{align*}
and if $\beta>1$ we have
\begin{equation}\label{eq:contrexemple_PoC_unif}
 \exists \eta>0,\ \forall n\in\mathbb{N},\ \forall m_0^n=\overline{m}_0>0,\quad \sup_{t\geqslant0}\mathcal{W}_2(\mathcal{L}(m_t^n),\delta_{\overline{m}_t})\geqslant \eta
\end{equation}
\end{proposition}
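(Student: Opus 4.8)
For the first two bounds the plan is a coupling argument followed by Grönwall. I would fix $n$, take $(X_0,Y_0)$ an optimal coupling achieving $\mathcal{W}_2(\mathcal{L}(m_0^n),\mathcal{L}(\overline m_0))$, let $X_t = m_t^n$ evolve from $X_0$ under the generator $\mathcal{A}_n$ of~\eqref{eq:gen_mag}, and let $Y_t = \overline m_t$ solve~\eqref{eq:non-linear-limit} from $Y_0$; then $(X_t,Y_t)$ couples the two time-$t$ laws, so it suffices to control $u_t := \E\big[(X_t-Y_t)^2\big] \geqslant \mathcal{W}_2(\mathcal{L}(m_t^n),\mathcal{L}(\overline m_t))^2$. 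Applying Dynkin's formula to $(x,y)\mapsto(x-y)^2$ --- the jump variable entering through $\mathcal{A}_n$, the ODE variable through the classical term $-2(X_t-Y_t)\dot Y_t$ --- and using $\tfrac2n(\lambda_+^n-\lambda_-^n)(m)=-g'(m)$ together with $\lambda_+^n(m)+\lambda_-^n(m)\leqslant K_\beta n$ on $[-1,1]$, a short computation gives $\mathcal{A}_n\big[(\cdot-y)^2\big](x)=-2(x-y)g'(x)+r_n(x)$ with $0\leqslant r_n(x)\leqslant 4K_\beta/n$, hence
\begin{equation*}
\frac{d}{dt}u_t \;=\; -2\,\E\big[(X_t-Y_t)(g'(X_t)-g'(Y_t))\big] + \E[r_n(X_t)], \qquad 0\leqslant \E[r_n(X_t)]\leqslant \frac{4K_\beta}{n}.
\end{equation*}

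From here~\eqref{eq:PoC_pas_unif} follows by bounding the first term in absolute value by $2\|g''\|_{L^\infty([-1,1])}u_t$ and invoking Grönwall, then $\sqrt{a+b}\leqslant\sqrt a+\sqrt b$ and the optimality of $(X_0,Y_0)$ to pass back to $\mathcal{W}_2$, with $c=\|g''\|_{L^\infty([-1,1])}$. The refinement for $\beta<1$ needs the one-sided estimate $(a-b)(g'(a)-g'(b))\geqslant 2(1-\beta)(a-b)^2$, i.e.\ $g''\geqslant 2(1-\beta)>0$ on $[-1,1]$; differentiating~\eqref{eq:pot-CW} twice yields the convenient form
\begin{equation*}
g''(m) \;=\; 2(1-\beta)\cosh(\beta m) + 2\beta m\,\sinh(\beta m) \;\geqslant\; 2(1-\beta),
\end{equation*}
since $\beta m\sinh(\beta m)\geqslant 0$. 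Then $\tfrac{d}{dt}u_t\leqslant -4(1-\beta)u_t + 4K_\beta/n$, and Grönwall gives the exponentially contracting bound with $c=2(1-\beta)$.

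For the lower bound~\eqref{eq:contrexemple_PoC_unif} in the regime $\beta>1$, the plan is to let $t\to\infty$. Fix any common initial condition $m_0^n=\overline m_0\in E_n\cap(0,1]$; since $\overline m_0>0$ the ODE solution satisfies $\overline m_t\to m_+$. Meanwhile $m_t^n$ is an irreducible continuous-time Markov chain on the finite set $E_n$, so $\mathcal{L}(m_t^n)$ converges as $t\to\infty$ to its unique invariant probability measure $\mu_n$, in total variation and hence in $\mathcal{W}_2$ since $E_n\subset[-1,1]$. The involution $m\mapsto -m$ intertwines $\mathcal{A}_n$ with itself, because $\lambda_+^n(-m)=\lambda_-^n(m)$ and $\lambda_-^n(-m)=\lambda_+^n(m)$; thus $\mu_n$ is symmetric and $\int x\,d\mu_n(x)=0$. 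Passing to the limit and using Jensen's inequality,
\begin{equation*}
\sup_{t\geqslant 0}\mathcal{W}_2(\mathcal{L}(m_t^n),\delta_{\overline m_t}) \;\geqslant\; \lim_{t\to\infty}\mathcal{W}_2(\mathcal{L}(m_t^n),\delta_{\overline m_t}) \;=\; \mathcal{W}_2(\mu_n,\delta_{m_+}) \;\geqslant\; \Big|\int x\,d\mu_n(x)-m_+\Big| \;=\; m_+,
\end{equation*}
so $\eta=m_+$, depending only on $\beta$, works.

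Most of this is routine (the Dynkin computation and the two Grönwall steps), and I expect the only genuinely computational step to be the convexity bound $g''\geqslant 2(1-\beta)$ for $\beta<1$; this is also the crux, since for $\beta>1$ the potential $g$ is concave near $0$, so the exponential prefactor in~\eqref{eq:PoC_pas_unif} cannot be dispensed with --- and~\eqref{eq:contrexemple_PoC_unif} is precisely that obstruction. One should also be slightly careful bookkeeping $r_n$: because $\lambda_\pm^n$ grow exactly linearly in $n$, the discrete-Laplacian term $\tfrac{4}{n^2}(\lambda_+^n+\lambda_-^n)$ is genuinely $O(1/n)$, which is what produces the $C/\sqrt n$ error. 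The argument for~\eqref{eq:contrexemple_PoC_unif} is, by contrast, soft: it uses only ergodicity of a finite chain and the symmetry of its invariant measure.
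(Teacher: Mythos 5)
Your proposal is correct and follows essentially the same route as the paper: the same Dynkin/generator computation on $(x-y)^2$ producing the drift term $-2(x-y)(g'(x)-g'(y))$ plus an $O(1/n)$ remainder, the same lower bound $g''\geqslant 2(1-\beta)$ for $\beta<1$, and the same symmetry-of-the-invariant-measure argument for $\beta>1$. The only cosmetic difference is in~\eqref{eq:contrexemple_PoC_unif}, where you pass to the limit $t\to\infty$ (getting the slightly sharper $\eta=m_+$) whereas the paper works at a large finite time $t_1$ and obtains $\eta=m_+/4$.
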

This proposition, which is proved in Section~\ref{s-sec:qques_resultats_CW}, tells us that, for any compact time interval, the process $m^n$ can be well approximated by $\overline m$, or conversely. However, on large time scales and for $\beta>1$, they would exhibit very different behaviors. The time scale is given by Eiring-Kramers formula, \cite[Chap. 13]{BdH15}. Indeed, for $\beta>1$, the potential $g$ exhibits two wells (around $m_+$ and $m_-$, see Figure~\ref{fig:potentiel_g}) and the limit $\overline{m}$ with initial condition $\overline{m}_0>0$ is bound to stay in the positive well. The magnetization $m^n$ will however almost surely leave the positive well to explore the negative one, as its unique stationary distribution, to which it converges, is symmetric. Our goal is now to describe the fact that, however long it takes for the process $m^n$ starting from a positive initial condition to go to $[-1,0]$, it will still be close to $\overline{m}$ as long as it remains in $[0,1]$. To this end, let us once and for all fix $0<\varepsilon<m_+$ and write the stopping time
\begin{align*}
\tau_n=\inf\{t\geqslant 0\text{ s.t. }m_t^n\leqslant \varepsilon\}.
\end{align*}
For the sake of simplicity, we assume $\varepsilon\in \R\setminus\mathbb{Q}$ to ensure that for all $n\in\n$, $\varepsilon\notin E_n$. Throughout the article, we call $\tau_n$ a \textit{death time}, as we force the process $m^n$ to stay greater than $\varepsilon$. Conditionally on the event $\left\{\tau_n >t\right\}$, the state space becomes
\[
E_n^{\varepsilon} = E_n \cap [\varepsilon,1].
\]
We then consider the law Curie-Weiss model, conditionally on its \textit{survival},
\begin{equation*}
\nu^n_t=\mathcal L(m_t^n|\tau_n>t).
\end{equation*}
as it is \textit{killed} when it leaves $E_n^{\varepsilon}$.  Our first result concerns the long-time convergence of $\nu^n$ towards a quasi-stationary distribution, with an explicit (in $n$) rate of convergence.


\begin{theorem}[Long-time convergence to QSD]\label{thm:long-time-behavior} 
If $\beta>1$, for all $n\in \mathbb{N}$, there exist a unique distribution $\nu^n_{\infty}$ on $E_n^{\varepsilon}$, called quasi-stationary distribution (QSD), such that the following holds:

Let $\eta\in]\varepsilon,m_+[$. There exist $c, C>0$ such that, for any $n\in\mathbb{N}$ and any initial condition $m_0^n\in[\eta,1]\cap E^\varepsilon_n$, we have for all $t\geqslant 0$
\begin{equation}\label{eq:cv_qsd}
d_{TV}(\nu^n_t,\nu^n_{\infty})\leqslant C n e^{-c t}.
\end{equation}
\end{theorem}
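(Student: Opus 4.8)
The plan is to treat the killed (sub-Markovian) magnetization process as a finite irreducible structure and invoke Perron--Frobenius theory. Conditionally on $\{\tau_n > t\}$, the process $m^n$ lives on the finite state space $E_n^\varepsilon = E_n \cap [\varepsilon,1]$, with killing at the boundary (the transition from the smallest state above $\varepsilon$ down through $\varepsilon$ is a killing rate). Let $Q_n$ denote the associated sub-generator, i.e.\ the matrix obtained by restricting $\A_n$ to $E_n^\varepsilon$ and discarding the mass that flows out; equivalently $Q_n$ is the generator $\A_n$ restricted to functions supported on $E_n^\varepsilon$. The first step is to check that the killed process is irreducible on $E_n^\varepsilon$ (every interior state communicates with every other before being killed, since the rates $\lambda_\pm^n$ are strictly positive on the relevant range), so that $-Q_n$ is an irreducible matrix with nonnegative off-diagonal entries. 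Then the classical Perron--Frobenius / Krein--Rutman argument gives a simple eigenvalue $-\lambda_n$ of $Q_n$ with largest real part, a strictly positive left eigenvector which (normalized to a probability measure) is the QSD $\nu^n_\infty$, and a strictly positive right eigenvector $h_n$. Uniqueness of $\nu^n_\infty$ follows from simplicity of this eigenvalue.

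The second step is the quantitative convergence~\eqref{eq:cv_qsd}. Here I would use the standard spectral representation: writing $\lambda_n = \lambda_n^{(1)} < \mathrm{Re}\,\lambda_n^{(2)} \le \dots$ for the eigenvalues of $-Q_n$ ordered by real part, one has for a nice initial condition
\[
d_{TV}(\nu^n_t,\nu^n_\infty) \le C(m_0^n,n)\, e^{-(\lambda_n^{(2)}-\lambda_n^{(1)})t},
\]
so that everything reduces to (a) a lower bound on the spectral gap $\gamma_n := \lambda_n^{(2)}-\lambda_n^{(1)}$ of the killed generator, which should be bounded below by a constant $c>0$ uniformly in $n$, and (b) a bound on the prefactor $C(m_0^n,n)$ by $Cn$ uniformly over $m_0^n \in [\eta,1]\cap E_n^\varepsilon$. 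For (a) I would not try to compute the gap exactly; instead I would compare the killed dynamics to the non-killed one (whose gap is controlled, near the well $m_+$, by the convexity of $g$ there, $g''(m_+)>0$), using a Doob $h$-transform: conjugating $Q_n$ by the ground state $h_n$ produces a genuine (conservative) Markov generator $\widetilde{\A}_n$ whose invariant measure is $\nu^n_\infty$ and whose spectral gap equals $\gamma_n$; one then proves a Poincaré inequality for $\widetilde{\A}_n$ with constant uniform in $n$, e.g.\ by a Hardy-type / Lyapunov-function argument exploiting that $\nu^n_\infty$ concentrates near $m_+$ and the drift is strongly confining there. For the prefactor (b), the factor $n$ is the price of passing from a well-localized starting point to the QSD measured in total variation on a space of $\sim n$ points; concretely it comes from bounding $\min_{E_n^\varepsilon} h_n$ from below by something like $c/n$ (the ground state is tiny near the boundary $\varepsilon$, being killed there), together with the expansion of $\delta_{m_0^n}$ along the eigenbasis.

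I expect step (a), the uniform-in-$n$ lower bound on the spectral gap of the killed process, to be the main obstacle. The subtlety is that killing at $\varepsilon$ distorts the spectrum, and one must rule out a spurious small eigenvalue associated with the ``escape toward $\varepsilon$'' mode; the key point ensuring this does not happen is that $\varepsilon$ is strictly inside the basin of $m_+$ (below $m_+$, above $0$), so the deterministic flow $\overline m$ pushes mass away from $\varepsilon$ toward $m_+$, and the probability that the process started from $[\eta,1]$ is killed before relaxing near $m_+$ is of order $e^{-cn}$ — hence negligible and not gap-reducing. Making this rigorous uniformly in $n$ would, I think, go through a Lyapunov function $V(m)$ for $\widetilde\A_n$ that is small near $m_+$ and large near $\varepsilon$, giving a Lyapunov--Poincaré estimate with $n$-independent constants; the remaining pieces — irreducibility, Perron--Frobenius, the spectral decomposition, and tracking the polynomial-in-$n$ prefactor — are comparatively routine.
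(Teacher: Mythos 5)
Your architecture matches the paper's: Perron--Frobenius on $E_n^\varepsilon$ gives the eigenpair $(b_n, h_n)$ and the QSD (Lemma~\ref{lem:existence-hn}), and the Doob $h$-transform $P^n_t f = e^{b_n t}h_n^{-1}M^n_t(h_n f)$ of display~\eqref{eq:def_h_transform} is the right object; you also correctly isolate the two things to prove, namely a uniform-in-$n$ rate and a polynomial prefactor. The gap is in the proposed mechanism. You want a Poincar\'e inequality for the transformed generator followed by an $L^2(\tilde{\nu}^n_\infty)\to$ TV step. But the prefactor in that chain is the initial $\chi^2$-distance $\|\delta_{m_0^n}/\tilde{\nu}^n_\infty - 1\|_{L^2(\tilde{\nu}^n_\infty)}\approx \tilde{\nu}^n_\infty(m_0^n)^{-1/2}$, and the QSD concentrates at scale $n^{-1/2}$ around $m_+$, so $\tilde{\nu}^n_\infty(m_0^n)$ is exponentially small in $n$ at any fixed $m_0^n$ at distance $O(1)$ from $m_+$. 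Your bound would therefore carry an exponential, not linear, prefactor in $n$, unless you supply an additional smoothing or weighting step which you do not describe. The paper avoids this precisely by working in an $L^\infty$-type framework: Harris' theorem (Proposition~\ref{prop:yathhm}) applied to $P^n$ with the Lyapunov function $V_n=h_n^{-1}g$ (Lemma~\ref{lem:lyapunov-CW-killed}) and a Doeblin minorization on a small set $K_n$ of width $O(n^{-1/2})$ (Lemma~\ref{lem:densite-CW-killed}); since $h_n\to 1$ on $[\eta,1]$ (Lemma~\ref{lem:estime-hn}), $V_n(m_0^n)$ is $O(1)$ uniformly in $n$ and the starting point costs nothing.

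Two further corrections. First, the factor $n$ in the prefactor does not come from a bound $\min_{E_n^\varepsilon} h_n\sim 1/n$: the paper proves no such bound, and indeed $h_n$ may decay exponentially near $\varepsilon$, which is exactly why initial data is restricted to $[\eta,1]$ with $\eta>\varepsilon$. It comes from the weight $\xi_n\sim n$ in the weighted TV distance $d^{TV}_{\xi_n,V_n,E^\varepsilon_n}$, forced by the Lyapunov drift inequality $P^n_\tau V_n\leq a V_n + b/n$ (hence $K=b/n$ and $\xi_n=\alpha_0/K\sim n$), itself a reflection of the CLT scale $n^{-1/2}$ of the jump process. Second, the unkilled Glauber magnetization is metastable with an exponentially small spectral gap, so its gap is \emph{not} controlled by $g''(m_+)>0$; that strict convexity (Lemma~\ref{lem:mini-non-degenere}) controls local confinement inside the positive well, and it is this local information that the paper's Lyapunov function, and any workable Poincar\'e inequality for the killed process, must exploit.
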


Notice that the QSD $\nu^n_\infty$ does not exactly correspond to the previous definition~\eqref{eq:def-QSD-intro}, but both objects are related as, in our case, $\pi^n_t$ is a function of $m^n_t$. From this estimate, combined with non uniform in time propagation of chaos results (see Section~\ref{sec:PoC}), we finally obtain the desired result.


\begin{theorem}\label{thm:final_result}
Assume $\beta>1$ and let $\eta\in]\varepsilon, m_+[.$ There exist $C, \alpha>0$ such that for any Lipschitz continuous function $f:[0,1]\mapsto\R$, and any $m_0^n\in E^\varepsilon_n\cap[\eta,1]$
\begin{align*}
\sup_{t\geqslant 0}\left|\E_{m_0^n}\left(f(m_t^n)|\tau_n>t\right)-f(\overline{m}_t)\right|\leqslant C\frac{\|f\|_{\infty}+\|f\|_{\textrm{lip}}}{n^\alpha},
\end{align*}
where $\overline{m}$ is the solution to~\eqref{eq:non-linear-limit} with initial condition $m_0^n$.
\end{theorem}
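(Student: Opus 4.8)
The plan is to split the time half-line at a threshold $T_n$ of order $\log n$ and to control each regime with one of the two main ingredients already at our disposal: the finite-time propagation of chaos of Section~\ref{sec:PoC} (a version of Proposition~\ref{prop:PoCnonUnif} adapted to the killed process) on $[0,T_n]$, and the long-time convergence to the quasi-stationary distribution, Theorem~\ref{thm:long-time-behavior}, on $[T_n,\infty[$. Throughout I write $\nu^n_t=\mathcal L(m_t^n\,|\,\tau_n>t)$ and recall $\overline m_0=m_0^n\in[\eta,1]$, so that $\overline m_t$ is the ODE trajectory started at $m_0^n$.

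\emph{Short times.} For $t\le T_n$ I would first strip off the conditioning. Since $\mathcal L(m_t^n)=(1-p_t)\,\nu^n_t+p_t\,\mathcal L(m_t^n\,|\,\tau_n\le t)$ with $p_t=\mathbb P_{m_0^n}(\tau_n\le t)$, we get $d_{TV}(\mathcal L(m_t^n),\nu^n_t)\le p_t$, hence $\big|\E_{m_0^n}(f(m_t^n)\,|\,\tau_n>t)-\E_{m_0^n}f(m_t^n)\big|\le\|f\|_\infty\, p_t$. Because $m_0^n\ge\eta>\varepsilon$ and $-g'$ is bounded below by a positive constant in a neighbourhood of $\varepsilon$, an elementary large-deviation (martingale) estimate gives $p_t\le C(1+t)e^{-cn}$, which over $[0,T_n]$ is $\le C(\log n)e^{-cn}$, smaller than any negative power of $n$. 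For the remaining term $\big|\E_{m_0^n}f(m_t^n)-f(\overline m_t)\big|\le\|f\|_{\textrm{lip}}\,\mathcal W_2(\mathcal L(m_t^n),\delta_{\overline m_t})$ I would use the propagation of chaos of Section~\ref{sec:PoC}: since $\overline m_0=m_0^n$ the initial error vanishes, and the essential point is that the (killed) dynamics stays near the convex well of $g$ around $m_+$, so that, away from a bounded initial layer, the generic growth factor $e^{ct}$ of~\eqref{eq:PoC_pas_unif} is replaced by a contraction; this yields $\big|\E_{m_0^n}f(m_t^n)-f(\overline m_t)\big|\le C\|f\|_{\textrm{lip}}\,n^{-1/2}$ uniformly on $[0,T_n]$. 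Altogether, on $[0,T_n]$ the quantity of interest is $\le C\,(\|f\|_\infty+\|f\|_{\textrm{lip}})\,n^{-1/2}$.

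\emph{Long times.} For $t\ge T_n$ I would use $\mathcal L(m_t^n\,|\,\tau_n>t)=\nu_t^n$ and the triangle inequality
\[
\big|\E_{m_0^n}(f(m_t^n)\,|\,\tau_n>t)-f(\overline m_t)\big|\le \|f\|_\infty\,d_{TV}(\nu^n_t,\nu^n_\infty)+\big|\nu^n_\infty(f)-f(m_+)\big|+\|f\|_{\textrm{lip}}\,|m_+-\overline m_t|.
\]
By Theorem~\ref{thm:long-time-behavior} the first term is $\le C\|f\|_\infty\, n\,e^{-ct}\le C\|f\|_\infty\, n\,e^{-cT_n}$. By exponential stability of the equilibrium $m_+$ of~\eqref{eq:non-linear-limit}, uniform over initial data in $[\eta,1]$ (the only other critical point is $0\notin[\eta,1]$ and $-g'(1)<0$), the last term is $\le C\|f\|_{\textrm{lip}}e^{-\lambda t}\le C\|f\|_{\textrm{lip}}e^{-\lambda T_n}$. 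The middle term expresses that the quasi-stationary distribution concentrates on the stable equilibrium; I would bound it by $\mathcal W_1(\nu^n_\infty,\delta_{m_+})\le C\,n^{-1/2}$, obtained either by a direct Laplace-type analysis of $\nu^n_\infty$ (which behaves like a Gaussian of variance $\sim 1/(n g''(m_+))$ centred at $m_+$ and restricted to $[\varepsilon,1]$) or, since $\nu^n_\infty=\lim_{t\to\infty}\nu^n_t$ in total variation, by applying the short-time estimate to the process started near $m_+$ and letting $t\to\infty$.

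\emph{Conclusion and main difficulty.} Choosing $T_n=\theta\log n$ with $\theta>(1+\alpha)/\min(c,\lambda)$ makes both $n\,e^{-cT_n}$ and $e^{-\lambda T_n}$ at most $n^{-\alpha}$, so both regimes give a bound $\le C\,(\|f\|_\infty+\|f\|_{\textrm{lip}})\,n^{-\alpha}$ and the theorem follows. The crux is the short-time step: with only the raw bound~\eqref{eq:PoC_pas_unif} the factor $e^{ct}$ restricts the first regime to $t$ of order $\log n$ with a small implicit constant, and since the QSD estimate of Theorem~\ref{thm:long-time-behavior} carries a factor $n$ it only becomes small for $t\gtrsim\log n$ with a larger constant, so one risks an uncovered intermediate band of times of order $\log n$. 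Overcoming this requires precisely the refined propagation of chaos of Section~\ref{sec:PoC}, exploiting that the killed process is confined near the convex well of $g$ around $m_+$ so that its finite-time control does not deteriorate and stays valid up to times polynomial — in fact exponential — in $n$, leaving ample overlap with the range where Theorem~\ref{thm:long-time-behavior} is effective. A secondary technical point is the quantitative concentration $\mathcal W_1(\nu^n_\infty,\delta_{m_+})=O(n^{-1/2})$ of the quasi-stationary distribution.
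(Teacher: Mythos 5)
Your proposal follows the same overall architecture as the paper's proof and correctly identifies both the time decomposition and the crucial difficulty: with only the raw bound~\eqref{eq:PoC_pas_unif} (factor $e^{ct}$) and the QSD convergence of Theorem~\ref{thm:long-time-behavior} (factor $ne^{-ct}$), there is a priori an uncovered band of times of order $\log n$, so a refined finite-time estimate exploiting the convexity of $g$ near $m_+$ is needed. This is indeed the content of~\eqref{eq:control_int}, and the paper's conclusion proceeds exactly by triangulating through $\nu^n_\infty$ and $\delta_{m_+}$ on long times, as you do. Your approach for bounding $|\nu^n_\infty(f)-f(m_+)|$ via the quasi-stationarity identity (the second alternative you sketch) is precisely what the paper makes rigorous in the derivation of~\eqref{eq:les_deux_sont_proches}, taking $t\sim\log n$.

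However, two points in your write-up are imprecise and should be corrected. First, the intermediate control~\eqref{eq:control_int} does \emph{not} give $O(n^{-1/2})$ uniformly on $[0,T_n]$: for $t$ of order $1$ the right-hand side contains an $e^{-c_2 t}$ term that is not small (it quantifies the transient phase before the process is trapped in the convex well), and there is also a conditioning cost $e^{c_3 t/\sqrt n}$ so the estimate degrades beyond $t\sim\sqrt n$ — the validity range is polynomial, not ``exponential in $n$'' as you claim. Hence you cannot collapse the analysis into a single ``short times'' regime; you still need the raw bound~\eqref{eq:non-unif-poc} for an initial layer $t\lesssim\log n$, then~\eqref{eq:control_int} for $\log n\lesssim t\lesssim n^{1/4}$, and only then the QSD regime, which is the three-regime splitting the paper uses. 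Second, your claim $p_t\leqslant C(1+t)e^{-cn}$ via a direct large-deviations martingale argument is stronger than what the paper proves or needs; the survival lower bound actually used is $\mathbb P_{m^n_0}(\tau_n>t)\geqslant Ce^{-ct/\sqrt n}$ (coming from $h_n$ and $b_n$, Lemmas~\ref{lem:estime-bn} and~\ref{lem:estime-hn}, see~\eqref{eq:gen_chaos_int_3}), which yields $p_t=O(t/\sqrt n)$ on these time scales — weaker than exponential in $n$ but sufficient. Neither imprecision is fatal to the strategy, but as written your short-time step does not close.
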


Let us give a few remarks on those results.


\begin{remark}

\begin{itemize}
    \item It can be notice that since the state space $E_n^\varepsilon$ is finite, the existence of a unique QSD, as well as the convergence of the conditional law towards this QSD, are immediate from the Perron-Frobenius Theorem, see Lemma~\ref{lem:existence-hn} for details. This however does not yield any explicit rate of convergence, and hence we could not deduce Theorem~\ref{thm:final_result} from it.

    \item Note also that, in Theorem~\ref{thm:long-time-behavior}, we start by fixing a parameter $\eta\in]\varepsilon,m_+[$ and then consider initial conditions $m_0^n\in[\eta,1]\cap E^\varepsilon_n$. The parameter $\eta$ serves to give an additional margin away from the killing point, and allows for some uniformity on the initial condition. This simplifies several calculations, mainly because we need to give controls on the death probability $\mathbb{P}\left(\tau_n>t\right)$ (where, recall, $\tau_n$ is the first hitting time of $[0,\varepsilon]$) and thus starting from the left-most point would create some difficulties.
\end{itemize}
\end{remark}

In the literature, the names \textit{Curie-Weiss} model or \textit{Glauber dynamics} seem to encapsulate a number of different models of spin dynamics, which however do share a similar phase transition and metastable behavior. We choose to consider a continuous time jump process similar to that of \cite{CDP12, Kra16, CK17}, but the results and methods could be adapted to other type of magnetization dynamics (in discrete times for instance). The main feature of interest to us is that it corresponds to a well known metastable process, which can be described by a finite dimensional secondary process, the magnetization.

The Curie-Weiss model, or equivalent models, and the large deviations from its limit have been widely studied  (see for instance \cite{EN78, CK17}, \cite[Chap. 4]{Ell85} or \cite[Chap. 2]{FV17}), and in particular the metastability is well-known \cite{CGOV84}. For other works on the metastability of the Glauber dynamics in related models, let us mention for instance \cite{BEGK01} (which estimates transition times for a class of random walks in multi-well potentials are obtained), \cite{BEGK02} (in a more abstract setting, but still in a discrete state space), \cite{BM02} (for the related Ising model in dimensions 2 and 3), \cite{BMP21} (study of the exit time for the dilute Curie–Weiss model) or \cite{Bas19}.

Concerning the long-time convergence of the system, the work \cite{LLP10} shows that the mixing time -for a form of Glauber dynamics restricted to positive magnetizations- is of order $n\log n$. Note that this is not in contradiction with Theorem~\ref{thm:long-time-behavior}, as we study the convergence of the magnetization whereas \cite{LLP10} considers the law of the process on the entire state space $\mathcal{S}_n$. The idea of studying the dependency in $n$ of the rate of the long time convergence of a particle system in order to show uniform in time propagation of chaos may also be found in~\cite{Mon23,JM22,JM25} for instance.

Finally, while uniform in time propagation of chaos (or mean-field limit) has attracted a lot of attention in the recent years (see for instance \cite{DEGZ20, GM21, GLBM22, LLF23, CLRW24, Sch24} in the case of regular interactions), to the best of our knowledge, this is the first time that such a result is obtained through the use of the quasi-stationary distribution for a conditioned particle system. The explicit and quantitative distance of the magnetization to the non-linear process is therefore our main new contribution.

%
%
%
%

\subsection{Outline}\label{s-sec:methode}

Let us now describes the method of proof for both Theorems~\ref{thm:long-time-behavior}~and~\ref{thm:final_result}. Denote $M^n$ the non conservative semi-group associated to the killed process $(m^n_t)_{t\geqslant0}$, defined as
\begin{align*}
    M^n_tf(m) = \mathbb E_m\po f(m^n_t)\mathds{1}_{\tau_n>t}\pf.
\end{align*}
The proof of Theorem~\ref{thm:long-time-behavior} relies on a so-called Doob transform of $M^n$, already used in the study of long-time behavior of non-conservative semi-group as in~\cite{FRS21} or \cite{BCGM22}. Using Perron-Frobenius Theorem, we show, in Lemma~\ref{lem:existence-hn}, the existence of some eigen-elements $(b_n,h_n)$ such that
\[
M^n_th_n=e^{-b_nt}h_n,\qquad h_n >0.
\]
These elements allow us to define the Doob transform
\[
P^n_tf(m)=e^{b_nt}h_n^{-1}M^n_t(h_nf)(m).
\]
There, $P^n$ is a Markov semi-group, to which we apply the classical Meyn-Tweedie approach: we show a local Doeblin condition around the stable state $m_+$ (see Lemmas~\ref{lem:densite-CW}~and~\ref{lem:densite-CW-killed}), as well as the existence of a Lyapunov function (see Lemma~\ref{lem:lyapunov-CW-killed}) which tends to bring the process close to $m_+$. These results heavily  rely on the metastability of the process, which translates at the level of the eigen-elements into $b_n\rightarrow 0$ and $h_n \rightarrow 1$. Indeed, as shown in Lemma~\ref{lem:estime-bn}, $b_n$ represents the rate at which the process reaches $\varepsilon$, and in the limit $n\rightarrow\infty$, we obtain that $M^n$ becomes a Markov semi-group and that $h_n$ converges towards the associated eigenfunction, i.e. the constant $1$ under our normalisation.

Concerning the long-time convergence to the QSD, the fact that we do not obtain a uniform in $n$ convergence result is a consequence of the method we apply. 
Note that the limit $(\overline{m}_t)_t$ is deterministic. As a consequence, the set on which we create density (i.e. on which we prove the Doeblin condition) becomes smaller and smaller as $n$ grows (in fact, it is of the form $K_n=\left[m_+-n^{-1/2},m_++n^{-1/2}\right]$). This comes from the fact that a central limit theorem would yield that $m^n_t\simeq \overline{m}_t + Gn^{-1/2}$, where $G$ is some Gaussian random variable. Our Lyapunov function on the contrary does not scale with $n$ (we morally use $g$), meaning that the time necessary for it to bring the process  to the compact set $K_n$ grows with $n$, hence the non uniformity in $n$ of the long-time convergence result.
 
Thanks to Theorem~\ref{thm:long-time-behavior}, we show that the QSD converges (with an explicit rate of convergence) towards $\delta_{m_+}$. Then the proof of Theorem~\ref{thm:final_result} is as follows: for $t\gtrsim n^\alpha$ ($\alpha>0$), $\nu_t^n$ and $\delta_{m_t}$ are close to $\nu_\infty^n$ and $\delta_{m_+}$ respectively, which are themselves close to each other, and for small $t\lesssim \ln(n)$ one can use Proposition~\ref{prop:PoCnonUnif} to control the difference between $m^n$ and $\overline{m}$. It remains to show that $m^n_t$ is close to $\overline{m}_t$ for $\ln(n)\lesssim t\lesssim n^\alpha$. To this end, we show an additional control, estimate~\eqref{eq:control_int} below.

\paragraph{Future directions.}

The Curie-Weiss model is one specific model. We hope nonetheless that this work may serve as well as a proof of concept for the study of many other metastable mean-field models. One such system that we hope would exhibit similar behaviors is given by the SDE
\[
    dX^i_t = -\nabla U\po X^i_t \pf dt - \frac{1}{N}\sum_{j=1}^N \nabla W\po X^i_t - X^j_t \pf dt + \sqrt{2\sigma}dB_t, \qquad i \in \llbracket 1,N\rrbracket,
\]
where $W(x) = x^2$ and $U$ is a double-well potential (e.g. $U(x) = (1-x^2)^2$). In this case, the limit $N\rightarrow\infty$ is described by the McKean-Vlasov process
\begin{align*} 
\left\{ \begin{array}{l}
    d\overline{X}_t = -\nabla U\po \overline{X}_t \pf dt - \nabla W\ast\overline{\rho}_t\po \overline{X}_t \pf dt + \sqrt{2\sigma}dB_t, \\ 
    \overline{\rho}_t = \mathcal{L}(\overline{X}_t).
    \end{array}
\right.
\end{align*} 
For low $\sigma$, for each well of $U$, there exists a locally stable equilibrium of this non-linear process, as proved recently in~\cite{MR24,Z25} (see also \cite{Bas20}). The next question would then be to show that if the empirical measure of the particle systems is close enough to one of these equilibria, then we recover a uniform in time convergence for the killed process. Note that the transition time for such a model has been studied in \cite{BM21}.

\paragraph{Organization of the paper.}
In Section~\ref{sec:inter-result} we start by proving various results on the classical Curie-Weiss model, among which we prove Proposition~\ref{prop:PoCnonUnif}, before studying the eigen-elements of the non conservative semi-group $M^n$. Section~\ref{sec:cv_vers_qsd} is then dedicated to the proof of Theorem~\ref{thm:long-time-behavior}.
Finally, in Section~\ref{sec:PoC}, we obtain two results of propagation of chaos, namely a non uniform in time propagation of chaos result for the conditioned process~\eqref{eq:non-unif-poc} and a generation-of-chaos-like result~\eqref{eq:control_int}, from which we deduce Theorem~\ref{thm:final_result}.

%
%
%
%

\paragraph{Notations.}

We gather here a list of some notations that the reader can refer to if need be. 
\begin{itemize}
    \item $n\in\n$ is the number of spins.
    \item $\beta\geqslant 0$ is the inverse temperature of the system. In most part of the article we assume $\beta>1$.
    \item $m^n_t$ is the magnetization of a system of size $n$ at time $t\geqslant0$. It is a jump process on $E_n=\left\{-1,-1+\frac{2}{n},...,1-\frac{2}{n},1\right\}$ defined by its generator $\mathcal{A}_n$ and its transition rates $\lambda^n_+, \lambda^n_-:E_n\mapsto \mathbb{R}^+$ \eqref{eq:gen_mag}.
    \item $\overline{m}_t$ is the limit process, defined by the ODE \eqref{eq:non-linear-limit} and its generator $\overline{\mathcal{A}}$.
    \item $g:[-1,1]\mapsto\mathbb{R}^+$ is the underlying potential of the processes defined in \eqref{eq:pot-CW}. $m_+=-m_-\in[0,1]$ are then the points at which $g$ reaches its minimum $0$ for $\beta>1$.
    \item $\varepsilon\in]0,m_+[$ is a small margin, defining a subdomain $E^\varepsilon_n=E_n\cap [\varepsilon,1]$. $\eta\in ]\varepsilon,1]$ usually denotes another margin to obtain results that are uniform in the initial condition.
    \item $\tau_n=\inf\{t\geqslant 0\ \text{s.t. }\ m^n_t\leqslant\varepsilon\}$ is a stopping time defining a death time for the process $m^n$.
    \item For a random variable $m$, we define $\mathcal{L}(m)$ the probability distribution of $m$. Additionally, for an event $A$, we define  $\mathcal{L}(m\big|A)$ the law of $m$ conditionally on the event $A$. We similarly define $\mathbb{E}\left(\cdot\big| A\right)$ the conditional expectation.
    \item For a random process $m:[0,\infty[\mapsto[-1,1]$, we define $\mathbb{E}_{\nu}$ the expectation with respect to the process $m$ with initial distribution $m_0\sim\nu$. We similarly define $\mathbb{P}_{\nu}$. Whenever $\nu$ is a Dirac mass at a point $m$, we may, with a slight abuse of notations, write $\mathbb{E}_{m}=\mathbb{E}_{\delta_m}$.
    \item We write $\nu^n_t=\mathcal{L}\left(m^n_t\big|\tau_n>t\right)$ and denote $\nu^n_\infty$ the (unique) quasi-stationary distribution (i.e. the limit of $\nu^n_t$ for $t\rightarrow\infty$) defined in Theorem~\ref{thm:long-time-behavior}.
    \item For a function $f:[-1,1]\mapsto \mathbb{R}$, we define the norms
    \begin{align*}
        \|f\|_{\infty}=\sup_{x\in[-1,1]}|f(x)|\qquad \text{and }\qquad \|f\|_{\textrm{lip}}=\sup_{x,y\in[-1,1], x\neq y}\left|\frac{f(x)-f(y)}{x-y}\right|.
    \end{align*}
    \item $\mathcal{W}_1,\mathcal{W}_2,d_{TV}:\mathcal{P}(\mathbb{R})\times\mathcal{P}(\mathbb{R})\mapsto \mathbb{R}^+$ are respectively the Wasserstein-1, Wasserstein-2, and Total Variation distances defined in \eqref{eq:def_distances}. Similarly $d^{TV}_{\xi, V, E}:\mathcal{P}(\mathbb{R})\times\mathcal{P}(\mathbb{R})\mapsto \mathbb{R}^+$ is a weighted Total Variation distance given in \eqref{eq:def_TV_weighted}.
    \item For a probability measure $\nu\in\mathcal{P}([-1,1])$ and a function $f:[-1,1]\mapsto \mathbb{R}$ we write $\nu(f)=\int_{-1}^1f(x)d\nu(x)$. With a slight abuse of notation, whenever $\nu$ is defined on a finite space $E$, we may write for $m\in E$ the explicit value of the probability $\nu(m)$.
    \item For a semi-group $P$, we denote its effect on a function $f$ by $P_tf(x)=\mathbb{E}_x(f(m_t))$, and on a measure by
    $\nu P_t=\mathcal{L}(m_t)$ where $m_0\sim \nu$. Note that in particular, for $\nu=\delta_m$, we have $\delta_mP_t(f)=P_tf(m)$. 
    \item $M^n$ is the semi-group of the killed process, defined by $M^n_tf(m)=\mathbb{E}_m\left(f(m^n_t)\mathds{1}_{\tau_n>t}\right)$.
    \item $h_n:E^\varepsilon_n\mapsto ]0,1]$ is the right eigenvector of $M^n$ defined in Lemma~\ref{lem:existence-hn}. $b_n>0$ is a constant defining its eigenvalue. 
    \item $P^n$ is a conservative semi-group, the h-transform of $M^n$ defined in \eqref{eq:def_h_transform} using $h_n$ and $b_n$.
    \item $K_n\subset]\varepsilon,1]$ is a compact set and $\nu_n\in\mathcal{P}(E^\varepsilon_n)$ is a probability distribution, both defined in Lemma~\ref{lem:densite-CW}.
    \item $V_n:E^\varepsilon_n\mapsto \mathbb{R}_+$ is a Lyapunov function defined in Lemma~\ref{lem:lyapunov-CW-killed}.
    \item $(\mu^n_t)_{t\geqslant0}$ and $(\overline{\mu}_t)_{t\geqslant0}$ are two auxiliary processes defined in Section~\ref{sec:aux_proc}. $U$ is their underlying potential, and $\mathcal{B}_n$ and $\mathcal{B}$ their respective generators.
    \item Throughout the document, $C$ and $c$ (or possibly $C_i$ and $c_i$ for some $i\in\n$) are positive constants, that may change from one line to the next, and their exact values are of no interest to us. We simply insist on the fact that they are independent of $n$.
    \item $\mathds{1}$ can denote the constant function equal to 1.
\end{itemize}

%
%
%
%

\section{Some intermediate results}\label{sec:inter-result}

This section is devoted to some results that we need in order to prove Theorem~\ref{thm:long-time-behavior}. We first prove in Section~\ref{s-sec:qques_resultats_CW} some results on the non-conditioned Curie-Weiss process. Then, in Section~\ref{sec:eigentrucs}, we prove the existence of a left eigenvector $h_n$ for $M^n$, as well as estimates on the eigenvalue $b_n$, on $h_n$ and on the death time $\tau_n$.

%
%
%
%

\subsection{Results on the Curie-Weiss model}\label{s-sec:qques_resultats_CW}

We first prove Proposition~\ref{prop:PoCnonUnif}. 


\begin{proof}[Proof of Proposition~\ref{prop:PoCnonUnif}]
Recall the definition of the function $g$ given in~\eqref{eq:pot-CW}, and write
\[
f_k(m) = m^k,\qquad  k\in \left\{1,2\right\}.
\]
A direct computations yields that
\begin{align*}
\mathcal A_nf_1(m) =& \mathcal Af_1(m) = -g'(m),\\ \mathcal A_nf_2(m) =& \mathcal Af_2(m) + 4\frac{\lambda^n_+(m)+\lambda^n_-(m)}{n^2}= -2 g'(m)m + 4\frac{\lambda^n_+(m)+\lambda^n_-(m)}{n^2}\leqslant  -2g'(m)m +\frac{8e^{\beta}}{n}.
\end{align*}
Thus, we have, for any initial conditions $m^n_0,\overline{m}_0\in[-1,1]$
\begin{align*}
\frac{d}{dt}\E\left|m_t^n-\overline{m}_t\right|^2=&\frac{d}{dt}\left(\E\left|m_t^n\right|^2-2 \overline{m}_t\E m_t^n+(\overline{m}_t)^2\right)\\
\leqslant&-2 \E \left(g'(m_t^n)m_t^n\right)+2\E \left(g'(m_t^n)\overline{m}_t\right)+2\E \left(g'(\overline{m}_t)m_t^n\right)-2\E g'(\overline{m}_t)\overline{m}_t  + \frac{8e^{\beta}}{n}\\
=&-2\E \left((g'(m_t^n)-g'(\overline{m}_t))(m_t^n-\overline{m}_t)\right)+\frac{8e^{\beta}}{n}.
\end{align*}

\begin{description}
\item[$\bullet$]
Fix $\beta<1$. In this case 
\[
g''(m) = 2(1-\beta)\ch(\beta m) + 2\beta m\sh(\beta m) \geqslant 2(1-\beta),
\]
so that a convexity inequality yields
\[
\frac{d}{dt}\E\left|m_t^n-\overline{m}_t\right|^2 \leqslant -4(1-\beta) \E\left|m_t^n-\overline{m}_t\right|^2 +\frac{8e^{\beta}}{n}.
\]
A direct computation then yields 
\begin{align*}
\frac{d}{dt}\left[e^{4(1-\beta)t}\left(\E\left|m_t^n-\overline{m}_t\right|^2-\frac{2e^{\beta}}{(1-\beta)n}\right)\right]\leqslant0,
\end{align*}
and thus
\begin{align}\label{eq:couplage-poc-beta<1}
\E\left|m_t^n-\overline{m}_t\right|^2\leqslant e^{-4(1-\beta)t}\E\left|m_0^n-\overline{m}_0\right|^2+\frac{2e^{\beta}}{(1-\beta)n}.
\end{align}
Now fix an optimal coupling $(m^n_0,\overline m_0)$ of $\mathcal{L}(m_0^n)$ and $\mathcal{L}(\overline{m}_0)$:
\[
\mathcal{W}_2(\mathcal{L}(m_0^n),\mathcal{L}(\overline{m}_0))^2 = \E\left|m_0^n-\overline{m}_0\right|^2. 
\]
By definition of the Wasserstein distance, \eqref{eq:couplage-poc-beta<1} yields
\[
\mathcal{W}_2(\mathcal{L}(m_t^n),\mathcal{L}(\overline{m}_t))^2 \leqslant \E\left|m_t^n-\overline{m}_t\right|^2 \leqslant e^{-4(1-\beta)t}\mathcal{W}_2(\mathcal{L}(m_0^n),\mathcal{L}(\overline{m}_0))^2+\frac{2e^{\beta}}{(1-\beta)n}
\]
and concludes for the uniform in time propagation of chaos in the non-metastable case.

\item[$\bullet$] Fix now $\beta>1$. The potential $g$ is no longer convex, but it is still a smooth function on the compact interval $[-1,1]$, and thus there exists $C>0$ such that for all $m,m'\in[-1,1]$
\begin{align*}
|g'(m)-g'(m')|\leqslant C|m-m'|.
\end{align*}
Therefore
\begin{equation*}
\frac{d}{dt}\E\left|m_t^n-\overline{m}_t\right|^2\leqslant 2C\E\left|m_t^n-\overline{m}_t\right|^2+\frac{4e^{\beta}}{n},
\end{equation*}
and Gronwall's Lemma concludes the proof of propagation of chaos in the metastable case.

\item[$\bullet$] In addition, for $\beta>1$, let us show that we cannot improve this result. Fix $n\in\n$, $m_0^n=\overline m_0>0$, and write $f:m\mapsto m$. The process $(m^n_t)_t$ is an irreducible Markov process in a finite state space, hence admits a unique stationary distribution $\mu_{n,\infty}$ (which is a function defined on $E^n$). By symmetry, we know that $x\in E^n\mapsto\mu_{n,\infty}(x)$ is an even function, so that $\mu_{n,\infty}(f) = 0$. In particular, we may find $t_0>0$ such that
\[
\forall\,t\geqslant t_0,\qquad \left|\mathbb{E}f(m^n_t)\right|\leqslant  \frac{m_+}{4}.
\]
Note that a priori $t_0$ may depend on $n$, but this is not an issue. Moreover, we have that
\begin{align*}
f(\overline{m}_t)\xrightarrow[t\rightarrow\infty]{} f(m_+)=m_+\quad \text{ by continuity},
\end{align*}
so that we may find $t_1\geqslant t_0$ such that for all $t\geqslant t_1$, $\overline{m}_t\geqslant \frac{m_+}{2}$.
This way, for $t\geqslant t_1$
\begin{align*}
\mathcal{W}_2(\mathcal{L}(m^n_t),\delta_{\overline{m}_t})\geqslant \mathcal{W}_1(\mathcal{L}(m^n_t),\delta_{\overline{m}_t})\geqslant \left|\mathbb{E}f(m^n_t)-f(\overline{m}_t)\right|=  \left|\mathbb{E}\po m^n_t-\overline{m}_t\pf \right|
\geqslant  \frac{m_+}{4} =:\eta,
\end{align*}
which concludes the proof of~\eqref{eq:contrexemple_PoC_unif}.
\end{description}
\end{proof}

Let us now prove a local Doeblin condition for the Curie-Weiss process. This will be of use in the study of the long-time behavior of the sub-Markovian semi-group $M^n$.


\begin{lemma}\label{lem:densite-CW}
Fix $\omega>0$ and write 
\begin{align*}
K_n = \left[m_+-\frac{\omega}{\sqrt{n}},m_++\frac{\omega}{\sqrt{n}}\right]\cap E^\varepsilon_n.
\end{align*}
For all $t\geqslant 0$, there exists $c>0$ such that for all $n\in\mathbb{N}$, there exists a probability measure $\nu_n$ on $E_n^\varepsilon$ such that for all $m\in K_n$ and $\varepsilon<a<b<1$
\begin{align*}
\mathbb P_m\po m^n_t \in [a,b], \tau_n>t \pf\geqslant c \nu_n([a,b]),
\end{align*}
and $\nu_n(K_n)=1$.
\end{lemma}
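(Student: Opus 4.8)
The plan is to establish the local Doeblin (minorization) condition by a direct coupling-free probabilistic argument: from any starting point $m\in K_n$, in a fixed time $t$ the killed chain $m^n$ has, with probability bounded below uniformly in $n$, made a prescribed number of jumps that both keep it inside $[\varepsilon,1]$ and spread its law over all of $K_n$. Concretely, I would first record the key scales. Since the rates $\lambda^n_\pm(m)$ are of order $n$ on the relevant region (bounded above by $ne^\beta$ and below by a positive multiple of $n$ away from $m=\pm1$), the process typically makes $\Theta(n)$ jumps in time $t$; the natural step size is $2/n$, so in time $t$ the magnetization can move by $O(1)$ but its fluctuations around the deterministic flow are $O(n^{-1/2})$, which is exactly why $K_n$ must have width $\omega/\sqrt n$. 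The strategy is to compare $m^n$ started in $K_n$ with a suitable reference measure supported on $K_n$.

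The main steps, in order, would be: (1) Choose $t$ fixed. Observe that $m_+$ is a stable fixed point of the limiting ODE \eqref{eq:non-linear-limit}, i.e. $g'(m_+)=0$ and $g''(m_+)>0$, so on $K_n$ (for $n$ large, and trivially for $n$ small since $E_n^\varepsilon$ is finite) the drift pushes the process back toward $m_+$ and away from $\varepsilon$. Hence for $m\in K_n$ the event $\{\tau_n>t\}$ has probability bounded below uniformly in $n$: a crude large-deviation / martingale estimate (using that $m^n_s - m^n_0 - \int_0^s g'(m^n_u)\,du$ wait — rather $+\int_0^s$, using $\mathcal A_n f_1 = -g'$) stays within $O(n^{-1/2}\log n)$ of the deterministic trajectory with overwhelming probability, and that trajectory stays well above $\varepsilon$. (2) On this high-probability survival event, show the conditional law of $m^n_t$ has a density (w.r.t. counting measure on $E_n^\varepsilon$) that is bounded below on $K_n$ by $c/|K_n|$ for a constant $c$ independent of $n$. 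This is the heart of the matter. I would do it by an explicit path-counting argument: fix a target point $m'\in K_n$ at graph-distance $k = O(\sqrt n)$ from $m$; there is an explicit sequence of up/down jumps reaching $m'$ while staying in $K_n$ (first move monotonically toward $m'$, the path length being at most $2\omega\sqrt n/2 = \omega\sqrt n$ steps), and the probability of realizing exactly that jump sequence in time $t$ — each jump having rate $\Theta(n)$ — is of order, after Stirling, $\exp(-\Theta(\sqrt n)\cdot)$...

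Here is where the real care is needed, and it is the \textbf{main obstacle}: naively the probability of hitting one specific point via one specific path decays like $e^{-c\sqrt n}$, which is \emph{not} bounded below. The resolution — and the point of choosing $K_n$ of width $n^{-1/2}$ rather than $o(n^{-1/2})$, and of allowing the free reference measure $\nu_n$ rather than Lebesgue — is that one must bound below $\mathbb P_m(m^n_t\in[a,b],\tau_n>t)$ by $c\,\nu_n([a,b])$ for a \emph{single} well-chosen $\nu_n$, not pointwise by a uniform density. So I would instead argue: the survival-conditioned law of $m^n_t$ is, by a local central limit theorem for the jump chain (or by comparison with the Ornstein–Uhlenbeck limit of $\sqrt n(m^n_t-\overline m_t)$ near the stable point $m_+$), comparable on the scale $n^{-1/2}$ to a fixed Gaussian-type profile; define $\nu_n$ to be precisely (a discretization on $E_n^\varepsilon$ of) that profile restricted and renormalized to $K_n$, so that $\nu_n(K_n)=1$ automatically. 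Then $\mathbb P_m(m^n_t\in[a,b],\tau_n>t)\ge (\text{survival prob})\times(\text{conditional mass of }[a,b])\ge c\,\nu_n([a,b])$, with $c$ uniform in $n$ because the ratio of the true conditional density to the reference profile is bounded below uniformly on $K_n$ — the control being uniform precisely because starting points $m\in K_n$ and the target window are all at distance $O(n^{-1/2})$, the regime where the LCLT gives two-sided Gaussian bounds. The small-$n$ case is handled trivially since $E_n^\varepsilon$ is a finite irreducible state space, so every state is reachable in time $t$ with positive probability, and one takes the minimum over the finitely many such $n$.

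A clean alternative I would keep in reserve, avoiding the LCLT, is to prove the minorization not at a single deterministic time but to exploit that only the \emph{existence} of $\nu_n$ with $\nu_n(K_n)=1$ and a uniform $c$ is required: split $[0,t]$ into a first phase $[0,t/2]$ on which the drift brings the process from anywhere in $K_n$ to a fixed sub-box $K_n' = [m_+ - \tfrac{\omega}{2\sqrt n}, m_+ + \tfrac{\omega}{2\sqrt n}]$ with probability bounded below (short-time estimate, drift contractive near $m_+$), and a second phase $[t/2,t]$ on which, from the smaller box, one achieves a genuine spread over all of $K_n$; then take $\nu_n(\cdot)=\mathbb P_{m_+}(m^n_{t/2}\in\cdot\mid \tau_n>t/2)$-type object renormalized to $K_n$. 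Either way the architecture is: uniform-in-$n$ lower bound on survival $\times$ uniform-in-$n$ lower bound on spread relative to a cleverly chosen reference, with the $n^{-1/2}$ scaling of $K_n$ being exactly what makes both bounds $n$-independent.
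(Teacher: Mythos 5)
You have correctly diagnosed the central difficulty (pointwise path-counting yields $e^{-c\sqrt n}$, which is useless; the $n^{-1/2}$ width of $K_n$ and the freedom in choosing $\nu_n$ are what make a uniform-in-$n$ bound possible), and your overall architecture — a uniform lower bound on survival multiplied by a uniform lower bound on spread relative to a well-chosen reference measure — is the right shape. The survival half of your plan is sound and close to what is actually done. But the spread half, which is the heart of the lemma, is not carried out: both of your proposals defer precisely this step to an unproved ingredient.

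In route (a) you invoke a local central limit theorem for the conditional law of the killed chain at a fixed time on the scale $n^{-1/2}$. That is exactly what needs to be proved, and it is not off-the-shelf: you would have to show that the survival conditioning does not distort the law of $\sqrt n(m_t^n-m_+)$ on that scale uniformly in the starting point in $K_n$, which is a statement of the same strength as the lemma itself. In route (b) you say that from the smaller box $K_n'$ one should ``achieve a genuine spread over all of $K_n$,'' but you do not explain how to establish that spread, nor how to compare the conditional laws started from two different points of $K_n$; defining $\nu_n$ as the conditional law started from $m_+$ only helps if you can show comparability with the law started from an arbitrary $m\in K_n$, which is again the whole content of the lemma. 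So there is a genuine gap: the mechanism for the minorization is missing.

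The paper closes this gap by a different and more elementary device: a three-process coupling $(\underline m^n, m^n, \overline m^n)$ with $\underline m^n\leq m^n\leq\overline m^n$ until they leave $\overline K_n$, where $\underline m^n$ and $\overline m^n$ are coupled by reflection with constant rates $\lambda^n_{\inf},\lambda^n_{\sup}$ bracketing $\lambda^n_\pm$ on $\overline K_n$. The rescaled gap $\sqrt n\,(\overline m^n-\underline m^n)$ is then shown (via generator convergence, Kallenberg Thm.\ 19.25) to converge in law to a one-dimensional diffusion, whence coalescence before exiting has probability bounded below uniformly in $n$. Crucially, this avoids any density comparison or LCLT: once the three processes coalesce inside $\overline K_n$, one simply takes $\nu_n$ to be the conditional law of the common value given coalescence and survival, so the minorization is automatic. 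This is the concrete spread mechanism that your sketch leaves open. If you want to pursue your LCLT route instead, you would need to actually establish a two-sided Gaussian estimate for the killed kernel on the $n^{-1/2}$ scale uniformly over $m\in K_n$; absent that, the proof is incomplete.
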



\begin{proof}
Let us denote
\[
\overline K_n = \left[m_+-\frac{3\omega}{\sqrt{n}},m_++\frac{3\omega}{\sqrt{n}}\right]\cap E^\varepsilon_n,
\]
and
\[
\underline m_0^n = \min K_n ,\qquad \overline m_0^n = \max K_n.
\]
Up to changing $\overline m_0^n$ into $\overline m_0^n + 2/n$, we assume that $n(\overline m_0^n-\underline m_0^n)$ is a multiple of $4$ for all $n\in \N$.
For all $m\in K_n$, we construct a coupling $(\underline m_t^n,m_t^n,\overline m_t^n)$ such that:
\begin{enumerate}
\item $m^n$ is the Curie-Weiss process with initial condition $m$.
\item Almost surely, for all $0\leqslant t \leqslant S$
\begin{equation*}
\underline m_t^n \leqslant m_t^n \leqslant \overline m_t^n,
\end{equation*}
where 
\[
S = \inf\left\{t\geqslant 0, \underline m_t^n \notin \overline K_n\text{ or }\overline m_t^n \notin \overline K_n  \right\}.
\]
\item For all $s\leqslant t$
\[
\underline m_s^n = \overline m_s^n \implies \underline m_t^n = \overline m_t^n.
\]
\item For all $t >0$
\[
\liminf_{n\rightarrow\infty} \mathbb P\po \underline m_t^n = \overline m_t^n, S>t \pf  > 0.
\]
\end{enumerate}
Suppose that such a coupling exists. Then, for all $m\in K_n$ we have
\begin{align*}
    \mathbb P_m\po m^n_t \in [a,b], \tau_n>t \pf &\geqslant \mathbb P\po \underline m_t^n \in [a,b], \underline m_t^n = \overline m_t^n, S>t \pf \\ & \geqslant \mathbb P\po \underline m_t^n = \overline m_t^n, S>t \pf \mathbb P\left( \underline m_t^n \in [a,b]\  \Big|\ \underline m_t^n = \overline m_t^n,  S>t \right) \\ &\geqslant c\nu_n\po[a,b]\pf,
\end{align*}
which would conclude the proof with $c = \inf_{n} \mathbb P\po \underline m_t^n = \overline m_t^n, S>t \pf$, and, by construction, $\nu_n(K_n)=1$. To construct such a coupling, first notice that, using $\lambda^n_+(m_+) = \lambda^n_-(m_+)$ by definition of $m_+$, there exists $r>0$ such that for all $m\in \overline K_n$
\[
\lambda^n_{\inf} := n\frac{1+m_+}{2}e^{-\beta m_+}\po 1-\frac{r}{\sqrt{n}}\pf \leqslant \lambda_-^n(m),\lambda_+^n(m) \leqslant n\frac{1+m_+}{2}e^{-\beta m_+}\po 1+\frac{r}{\sqrt{n}}\pf =: \lambda^n_{\sup}.
\]
Then, for $t\leqslant S$, define the coupling as a Markov process on $E_n^3$ with jump rates, for $m_1<m_2<m_3$:
\begin{equation*}
\begin{array}{ll}
(m_1,m_1,m_1) \rightarrow \left(m_1 + 2/n,m_1 + 2/n,m_1 + 2/n\right) &: \lambda^{n}_+(m_1),\\
(m_1,m_1,m_1) \rightarrow \left(m_1 - 2/n,m_1 - 2/n,m_1 - 2/n\right) &: \lambda^{n}_-(m_1),\\
(m_1,m_3,m_3) \rightarrow \left(m_1 - 2/n,m_3 + 2/n,m_3 + 2/n\right) &: \lambda^{n}_+(m_3),\\
(m_1,m_3,m_3) \rightarrow \left(m_1 + 2/n,m_3 - 2/n,m_3 - 2/n\right) &: \lambda^{n}_{\inf},\\
(m_1,m_3,m_3) \rightarrow \left(m_1 ,m_3 - 2/n,m_3\right) &: \lambda^{n}_-(m_3)-\lambda^{n}_{\inf},\\
(m_1,m_3,m_3) \rightarrow \left(m_1 - 2/n,m_3 ,m_3 + 2/n\right) &: \lambda^{n}_{\sup}-\lambda^{n}_-(m_3),\\
(m_1,m_1,m_3) \rightarrow \left(m_1 - 2/n,m_1 - 2/n,m_3 + 2/n\right) &: \lambda^{n}_-(m_1),\\
(m_1,m_1,m_3) \rightarrow \left(m_1 + 2/n,m_1 + 2/n,m_3 - 2/n\right) &: \lambda^{n}_{\inf},\\
(m_1,m_1,m_3) \rightarrow \left(m_1 ,m_1 + 2/n,m_3 \right) &: \lambda^{n}_+(m_1) - \lambda^{n}_{\inf},\\
(m_1,m_1,m_3) \rightarrow \left(m_1 - 2/n,m_1 ,m_3 + 2/n\right) &: \lambda^{n}_{\sup} - \lambda^{n}_-(m_1),\\
(m_1,m_2,m_3) \rightarrow \left(m_1 - 2/n,m_2 ,m_3 + 2/n\right) &: \lambda^{n}_{\sup},\\
(m_1,m_2,m_3) \rightarrow \left(m_1 + 2/n,m_2 ,m_3 - 2/n\right) &: \lambda^{n}_{\inf},\\
(m_1,m_2,m_3) \rightarrow \left(m_1 ,m_2 - 2/n,m_3 \right) &: \lambda^{n}_-(m_2),\\
(m_1,m_2,m_3) \rightarrow \left(m_1 ,m_2 + 2/n,m_3 \right) &: \lambda^{n}_+(m_2).
\end{array}
\end{equation*}
The idea behind the coupling is that when $\underline m^n_t < m^n_t < \overline m^n_t$, $m^n_t$ is independent from $(\underline m^n_t, \overline m^n_t)$ and $(\underline m^n_t, \overline m^n_t)$ is a coupling by reflection of two jump processes, $\underline m^n$ (resp. $\overline m^n$) jumping to the right (resp. left) with rate $\lambda_{\inf}$ and  to the left (resp. right) with rate $\lambda_{\sup}$. When $m_t^n$ is equal to $\underline m_t^n$ or $\overline m_t^n$, they are coupled in order to keep the first two conditions, and finally when the three processes touch, they stay equal for all time to a process evolving as the Curie-Weiss dynamic. Condition~1,~2 and~3 are direct from the jump rates. Let us now show the fourth condition. Write 
\[
\Delta_t = \overline m_t^n - \underline m_t^n, \qquad T = \inf\left\{ t \geqslant 0, \Delta_t = 0\right\}.
\]
Because $(\underline m^n_t, \overline m^n_t)$ is a coupling by reflection and thus, until $\underline m^n_t=\overline m^n_t$, their mean $\frac{\overline m^n_t-\underline m^n_t}{2}$ is constant, we have that 
\[
S\mathds 1_{S<T} = \inf\left\{ t\geqslant 0, \Delta_t = \alpha_n\right\}\mathds 1_{S<T},
\]
for some $\alpha_n \approx 6\omega/\sqrt{n}$.
For all $t\leqslant S\wedge T$, $\Delta$ is a jump process on $\left\{0, 4/n, 8/n, \dots, \alpha_n - 4/n, \alpha_n\right\}$, with $\Delta_0 = \overline m^n_0 - \underline m^n_0 \approx 2\omega/\sqrt{n}$ and jump rates
\[
\Delta \rightarrow \Delta + 4/n: \, \lambda^{n}_{\sup},\qquad  \Delta \rightarrow \Delta - 4/n: \, \lambda^{n}_{\inf}.
\]
Condition~4 now reads
\[
\liminf_{n\rightarrow\infty} \mathbb P\po \tau_0 < t\wedge \tau_{\sqrt{n}\alpha_{n}} \pf >0,\qquad \tau_i = \inf\left\{t\geqslant 0, \, \sqrt{n}\tilde \Delta_t = i \right\},
\]
where $\po \tilde \Delta_t \pf_{t\geqslant 0}$ is a jump process on $\frac{4}{n}\N$, constructed in such a way that $\tilde \Delta_t = \Delta_t$ for all $0\leqslant t \leqslant T \wedge S$. 
Compute the generator of $\sqrt{n}\tilde \Delta$, denoted here and only here $\mathcal{G}_n$, for a function $f\in\mathcal{C}^\infty([0,1],\mathbb{R})$ (i.e. the set of smooth functions which is, in the sense of \cite[Theorem 19.10]{Kal02}, a core for the generators of both jump and diffusion processes) 
\begin{align*}
\mathcal{G}_n f(m)=&\lambda^{n}_{\sup}\left(f \left(m+\frac{4}{\sqrt{n}}\right)-f(m)\right)+\lambda^{n}_{\inf}\left(f\left(m-\frac{4}{\sqrt{n}}\right)-f(m)\right)\\
=&\frac{n}{2}(1+m_+)e^{-\beta m_+}\left(1+\frac{r}{\sqrt{n}}\right)\left(f\left(m+\frac{4}{\sqrt{n}}\right)-f(m)\right)\\
&+\frac{n}{2}(1+m_+)e^{-\beta m_+}\left(1-\frac{r}{\sqrt{n}}\right)\left(f\left(m-\frac{4}{\sqrt{n}}\right)-f(\sqrt{n}m)\right)\\
=&4r(1+m_+)e^{-\beta m_+}f'(m)+8(1+m_+)e^{-\beta m_+}f''(m)+ \underset{n\rightarrow\infty}{o}\left(1\right).
\end{align*}
From \cite[Theorem 19.25]{Kal02}, we obtain that
\begin{align*}
\mathcal Law\po\left(\sqrt{n}\tilde \Delta_s\right)_{0\leqslant s\leqslant t}\pf \xrightarrow[n\rightarrow \infty]{} \mathcal Law\po \left(X_s\right)_{0\leqslant s\leqslant t} \pf,
\end{align*}
where $\left(X_s\right)_{0\leqslant s\leqslant t}$ is the (unique strong) solution of the SDE
\begin{align*}
dX_t= 4r(1+m_+)e^{-\beta m_+}dt+4\sqrt{(1+m_+)e^{-\beta m_+}}dB_t,\qquad X_0= 2\omega,
\end{align*}
and $B$ is a standard Brownian motion.
Since $\sqrt{n}\alpha_n \rightarrow 6\omega$, Condition~4 is implied by 
\[
\liminf_{n\rightarrow\infty} \mathbb P\po \tau_0 < t\wedge \tau_{5\omega} \pf > 0.
\]
Additionally, the functions defined on $\mathcal C([0,T],\mathbb{R})$ by $x\mapsto \inf_{[0,T]}x$ and $x\mapsto \sup_{[0,T]}x$ are continuous. Hence, because the minimum and the maximum of a diffusion are atomless random variables, we have the convergence
\[
\mathbb P_{\Delta_0}\po \tau_0 < t\wedge \tau_{5} \pf  = \mathbb P_{\tilde \Delta_0}\po \inf_{[0,t]} \sqrt{n} \tilde \Delta < 0,\quad \sup_{[0,t]} \sqrt{n} \tilde \Delta < 5\omega \pf \xrightarrow[n\rightarrow\infty]{} \mathbb P_{2\omega}\po \overline\tau_0 < t\wedge \overline\tau_{5\omega} \pf >0,
\]
where
\[
\overline\tau_x = \inf\left\{t\geqslant 0, \, X_t = x \right\},
\]
which concludes the proof.
\end{proof}

This last lemma shows that $m_+$ is not a degenerate minima of $g$.

\begin{lemma}\label{lem:mini-non-degenere}
    We have $g''(m_+) > 0$.
\end{lemma}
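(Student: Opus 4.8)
The plan is to compute $g''$ explicitly and evaluate it at $m_+$, using the characterization of $m_+$ as a critical point of $g$. From the proof of Proposition~\ref{prop:PoCnonUnif} we already have
\[
g''(m) = 2(1-\beta)\ch(\beta m) + 2\beta m \sh(\beta m).
\]
At a generic point this can be negative (which is exactly why the metastable case is interesting), so the argument must use that $m=m_+$ is not arbitrary: it is the positive solution of $g'(m_+)=0$.

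First I would record what the critical point equation says. Since $g'(m) = (1+m)e^{-\beta m} - (1-m)e^{\beta m}$, the equation $g'(m_+)=0$ reads $(1+m_+)e^{-\beta m_+} = (1-m_+)e^{\beta m_+}$, equivalently
\[
e^{2\beta m_+} = \frac{1+m_+}{1-m_+}, \qquad\text{i.e.}\qquad \tanh(\beta m_+) = m_+,
\]
the familiar Curie--Weiss fixed-point relation. Next I would rewrite $g''(m_+)$ in a form where this relation can be substituted. Dividing by $2\ch(\beta m_+)>0$, the sign of $g''(m_+)$ is that of
\[
(1-\beta) + \beta m_+ \tanh(\beta m_+) = (1-\beta) + \beta m_+^2 = 1 - \beta(1-m_+^2),
\]
using $\tanh(\beta m_+)=m_+$. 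So it remains to show $\beta(1-m_+^2) < 1$.

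The key step is therefore the strict inequality $\beta(1-m_+^2)<1$, which I would obtain from a convexity/monotonicity property of the fixed-point map. Consider $\varphi(m) = \tanh(\beta m) - m$ on $(0,1)$; it satisfies $\varphi(0)=0$, $\varphi'(0)=\beta-1>0$ (since $\beta>1$), and $\varphi(1)=\tanh\beta - 1 < 0$, so $\varphi$ has a zero $m_+\in(0,1)$. Because $\tanh$ is strictly concave on $(0,\infty)$, the function $\varphi$ is strictly concave on $(0,1)$; hence it has at most two zeros on $[0,1)$, namely $0$ and $m_+$, and being concave with $\varphi>0$ just to the right of $0$ it must be \emph{decreasing} at its second zero, i.e. $\varphi'(m_+) < 0$. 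But $\varphi'(m_+) = \beta(1-\tanh^2(\beta m_+)) - 1 = \beta(1-m_+^2) - 1$, so $\beta(1-m_+^2) < 1$ exactly as needed. Combining with the previous display gives $g''(m_+) = 2\ch(\beta m_+)\bigl(1-\beta(1-m_+^2)\bigr) > 0$.

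The only delicate point is making the "decreasing at the second zero" claim rigorous — i.e. ruling out $\varphi'(m_+)=0$ to get the strict inequality. This follows because if $\varphi'(m_+)=0$ then, by strict concavity, $m_+$ would be the unique maximizer of $\varphi$ and $\varphi$ would be $<\varphi(m_+)=0$ elsewhere on $(0,1)\setminus\{m_+\}$, contradicting $\varphi>0$ on some interval $(0,\delta)$ (which holds since $\varphi'(0)=\beta-1>0$). Hence $\varphi'(m_+)<0$ strictly. Alternatively, one can argue directly: $e^{2\beta m_+}=(1+m_+)/(1-m_+)$ combined with the elementary inequality $e^{x} > \frac{1+\tanh(x/2)}{1-\tanh(x/2)}\cdot(\text{correction})$... but the concavity argument above is cleanest and avoids case analysis. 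Everything else is a routine substitution, so I expect no real obstacle beyond this strictness bookkeeping.
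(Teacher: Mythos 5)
Your proof is correct, and it reaches the key inequality by a genuinely different route than the paper. Both arguments boil down to showing $1-\beta+\beta m_+^2>0$, i.e.\ $\beta(1-m_+^2)<1$ (your prefactor $2\ch(\beta m_+)$ agrees with the paper's $\tfrac{2e^{\beta m_+}}{1+m_+}$, both equal $\tfrac{2}{\sqrt{1-m_+^2}}$ via $e^{2\beta m_+}=\tfrac{1+m_+}{1-m_+}$). The paper establishes the inequality in the equivalent form $m_+>\sqrt{1-1/\beta}$ by evaluating $g'$ at that specific point and running an explicit monotonicity computation for the ratio $f(\beta)=\lambda_+^n/\lambda_-^n$ as a function of $\beta$, showing $f(1)=1$ and $f'>0$. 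You instead pass to the $\tanh$ fixed-point form and recognize $\beta(1-m_+^2)-1$ as $\varphi'(m_+)$ for $\varphi(m)=\tanh(\beta m)-m$; strict concavity of $\varphi$ (from strict concavity of $\tanh$ on $(0,\infty)$) together with $\varphi(0)=0$, $\varphi'(0)=\beta-1>0$, $\varphi(m_+)=0$ forces $\varphi'(m_+)<0$, since otherwise $\varphi'$ would be positive on all of $(0,m_+)$ and $\varphi(m_+)$ could not return to $0$. This is a cleaner, more conceptual argument that makes the strictness automatic; the paper's version is more hands-on but avoids having to invoke a concavity principle. Your strictness "bookkeeping" worry at the end is actually a non-issue with the monotone-$\varphi'$ phrasing just given, which is slightly tighter than the unique-maximizer version you wrote.
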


\begin{proof}
By definition of $m_+$, we have
\[
(1+m_+)e^{-\beta m_+} = (1-m_+)e^{\beta m_+}.    
\]
Using this equality, we get
\begin{align*}
g''(m_+) =& \beta m_+ \po e^{\beta m_+} - e^{-\beta m_+} \pf + (1-\beta) \po e^{\beta m_+} + e^{-\beta m_+} \pf \\ 
=& e^{\beta m_+}\left[\beta m_+ \po 1- \frac{1-m_+}{1+m_+}\pf + (1-\beta)\po 1+ \frac{1-m_+}{1+m_+} \pf  \right] \\ 
=& \frac{2e^{\beta m_+}}{1 + m_+} \po 1 - \beta + \beta m_+^2 \pf.
\end{align*}
Thus, it only remains to show that $m_+ > \sqrt{1 - \frac{1}{\beta}}$, or equivalently that $m=\sqrt{1 - \frac{1}{\beta}}$ implies $g'(m) <0$, i.e. $\lambda_-^n(m) < \lambda_+^n(m)$. Denote
\begin{align*}
f(\beta) := \frac{\lambda_+^n\left(\sqrt{1 - \frac{1}{\beta}}\right)}{\lambda_-^n\left(\sqrt{1 - \frac{1}{\beta}}\right)} = \frac{1-\sqrt{1 - \frac{1}{\beta}}}{1 + \sqrt{1 - \frac{1}{\beta}}}e^{2\beta \sqrt{1 - \frac{1}{\beta}}} = \beta \po 1 - \sqrt{1 - \frac{1}{\beta}}\pf^2e^{2\beta\sqrt{1 - \frac{1}{\beta}}} .
\end{align*}
Note that $f(1) = 1$ and, for $\beta>1$, a standard computation yields
\begin{align*}
f'(\beta) = 2\beta e^{2\beta\sqrt{1- \frac{1}{\beta}}}\left(\sqrt{1- \frac{1}{\beta}}-1\right)^2\sqrt{1- \frac{1}{\beta}}>0,
\end{align*}
implying that $\lambda_+^n\left(\sqrt{1 - \frac{1}{\beta}}\right)>\lambda_-^n\left(\sqrt{1 - \frac{1}{\beta}}\right)$ for $\beta>1$, i.e. $m_+ > \sqrt{1 - \frac{1}{\beta}}$, which concludes the proof. 
\end{proof}

%
%
%
%

\subsection{Estimates on the eigen-elements}\label{sec:eigentrucs}

Recall the definition of the semi-group of the killed process
\[
M^n_tf(m)=\mathbb{E}_m\left(f(m^n_t)\mathds{1}_{\tau_n>t}\right).
\]
Since the goal is to study a Doob-transform of this semi-group, as explained in Section~\ref{s-sec:methode}, we need the existence of a right-eigenvector $h_n$. This is a consequence of Perron-Frobenius theorem. However, the existence in itself is not enough, and we show estimates on $h_n$ as well as on its corresponding eigenvalue.


\begin{lemma}\label{lem:existence-hn}
For all $n\in\mathbb{N}$, there exists a unique function $h_n:E_n^\varepsilon\to\R_+^*$, and a constant $b_n>0$, such that 
\begin{align*}
M^n_th_n=e^{-b_nt}h_n, \quad \|h_n\|_{\infty}=1.
\end{align*}
Additionally, we have that for all $n\in \N$, there exist a measure $\hat{\nu}^n_\infty$ on $E_n^\varepsilon$, and $\gamma_n>0$, such that for all $f:E_n^\varepsilon\to\R$ and all $x\in E^\varepsilon_n$
\begin{equation}\label{eq:conv-non-explicite}
\left| e^{b_n t}M_t^n(f)(x) - h_n(x)\hat{\nu}^n_\infty(f) \right| \leqslant Ce^{-\gamma_n t}\|f\|_{\infty}.
\end{equation}
\end{lemma}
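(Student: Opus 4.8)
The plan is to obtain everything from the Perron--Frobenius theorem applied to the finite-dimensional sub-Markovian semi-group $M^n$. First I would write $M^n_t=e^{tL_n}$, where $L_n$ is the matrix indexed by $E^\varepsilon_n$ with off-diagonal entries $L_n(m,m\pm 2/n)=\lambda^n_\pm(m)\,\mathds{1}_{\{m\pm 2/n\in E^\varepsilon_n\}}$ and diagonal entries $L_n(m,m)=-\lambda^n_+(m)-\lambda^n_-(m)$, the defect in the row sums at the states neighbouring $\varepsilon$ encoding the killing. Since the rates $\lambda^n_\pm$ are strictly positive and $E^\varepsilon_n$ is an interval for the birth-and-death chain $m^n$, the restricted chain is irreducible, so $e^{tL_n}$ is an entrywise strictly positive matrix for every $t>0$ (equivalently, $L_n+\kappa\,\mathrm{Id}$ is nonnegative and irreducible for $\kappa$ large enough).

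Next I would invoke Perron--Frobenius: for fixed $t>0$, $e^{tL_n}$ has a simple eigenvalue equal to its spectral radius, carried by strictly positive left and right eigenvectors, and dominating the moduli of all other eigenvalues. By the semi-group property this top eigenvalue is of the form $e^{-b_n t}$; the right eigenvector may be taken independent of $t$ and normalised by $\|h_n\|_\infty=1$, which determines it uniquely and gives $M^n_t h_n=e^{-b_nt}h_n$. Substochasticity of $M^n_t$ forces its spectral radius to be $\leqslant 1$, hence $b_n\geqslant 0$; and $b_n=0$ is impossible, for it would yield $0<h_n(m)=\mathbb{E}_m\!\left[h_n(m^n_t)\mathds{1}_{\tau_n>t}\right]\leqslant\mathbb{P}_m(\tau_n>t)$, which tends to $0$ as $t\to\infty$ since the birth-and-death chain reaches $[-1,\varepsilon]$ almost surely. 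Thus $b_n>0$.

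For the quantitative convergence \eqref{eq:conv-non-explicite}, I would use finite-dimensional spectral theory. Since $-b_n$ is a simple and strictly dominant eigenvalue of $L_n$, there is a spectral gap $\gamma_n:=-b_n-\max\{\mathrm{Re}\,\lambda:\lambda\in\mathrm{Spec}(L_n)\setminus\{-b_n\}\}>0$, with associated rank-one spectral projector $\Pi_n f=h_n\,\hat\nu^n_\infty(f)$, where $\hat\nu^n_\infty$ is the left Perron eigenvector (a nonnegative measure) normalised so that $\hat\nu^n_\infty(h_n)=1$. Decomposing $e^{tL_n}=e^{-b_nt}\Pi_n+e^{tL_n}(\mathrm{Id}-\Pi_n)$ and bounding the second term by $Ce^{-(b_n+\gamma'_n)t}$ for any $0<\gamma'_n<\gamma_n$ (a slightly reduced gap absorbing the polynomial factors coming from possible Jordan blocks off the dominant eigenspace), then applying this to $f$ and multiplying by $e^{b_nt}$, gives exactly \eqref{eq:conv-non-explicite}, with $C$ and $\gamma_n$ depending on $n$ through $|E^\varepsilon_n|$ and the spectral gap — which is the non-explicit rate announced in the statement.

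I do not expect a genuine obstacle here: the argument is essentially bookkeeping around Perron--Frobenius on a finite matrix. The three points that each deserve a line of justification are the irreducibility of the restricted generator $L_n$ (so that the top eigenvalue is simple and the eigenvectors strictly positive), the strict positivity $b_n>0$ (which rests on the almost sure passage of $m^n$ to $[-1,\varepsilon]$), and the identification of the measure $\hat\nu^n_\infty$ appearing in \eqref{eq:conv-non-explicite} with the normalised left Perron eigenvector of $L_n$.
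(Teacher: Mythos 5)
Your proposal is correct and follows essentially the same route as the paper: both apply Perron--Frobenius to a shifted version of the finite sub-Markovian generator ($\Lambda^n + \overline{\lambda}^n I$ in the paper, $L_n+\kappa\,\mathrm{Id}$ in yours), identify $h_n$ and $b_n$ from the dominant eigenpair, and then invoke finite-dimensional spectral theory (spectral gap plus rank-one projector, with the usual care about Jordan blocks) to obtain \eqref{eq:conv-non-explicite}. The one place you diverge is in justifying $b_n>0$: the paper leaves this implicit in the Perron--Frobenius row-sum bound (one row of $\Lambda^n+\overline{\lambda}^n I$ sums strictly below $\overline{\lambda}^n$, forcing $\rho_n<\overline{\lambda}^n$), whereas you argue probabilistically via $h_n\leqslant 1$ and $\mathbb P_m(\tau_n>t)\to 0$; both are valid, and yours is arguably more explicit on this point.
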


The problem here is that Perron-Frobenius' theorem does not give any information on the spectral gap. In particular, one could a priori  have $\gamma_n\rightarrow 0$, and this would not yield the desired result. This is why we resort to an Harris-type theorem to get the more precise version~\eqref{eq:long-time-semi-group} below. Here, $\hat{\nu}^n_\infty$ is not a probability measure, but had we chosen the normalization $\hat{\nu}^n_\infty(h_n)=1$ instead of $\|h_n\|_{\infty}=1$, $\hat{\nu}^n_\infty$ would be the QSD.

\begin{proof}[Proof of Lemma~\ref{lem:existence-hn}]
Write 
\[
\varepsilon_n = \min E_n^\varepsilon, \qquad \overline \lambda^n = \sup_{m\in E^\varepsilon_n}\lambda_-^n(m) + \lambda_+^n(m).
\]
Let us denote $\Lambda^n$ the generator of $M^n_t$, namely the matrix defined by
\[
\Lambda^n_{m,m-\frac{2}{n}} = \lambda_-^n(m),\, m \neq \varepsilon_n,\qquad \Lambda^n_{m,m+\frac{2}{n}} = \lambda_+^n(m),\qquad \Lambda^n_{m,m} =-\left( \lambda_-^n(m) + \lambda_-^n(m)\right),
\]
and $\Lambda^n_{m,m'} = 0$ otherwise. The matrix $\Lambda^n$ corresponds to the generator of the semi-group $M^n$, and equal to $\mathcal A$ for all $m,m'>\varepsilon_n$. If $I_n$ denote the identity matrix, we have that $\Lambda^n + \overline \lambda^nI_n$ is a positive irreducible matrix, with all lines summing to at most  $\overline \lambda^n$. Hence, Perron-Frobenius yields that there exists 
\[
0< \rho_n\leqslant \overline \lambda^n, \qquad h_n:E^\varepsilon_n\to \R_+^*,\quad \text{s.t.}\quad (\Lambda^n + \overline \lambda^nI_n)h_n = \rho_nh_n,
\]
and $\rho_n$ corresponds to the spectral radius, and $h_n$ is unique up to normalization. In particular, writing $b_n = \overline \lambda^n - \rho_n$, we have that:
\[
\partial_t M_t^n(h_n) = M_t^n(\Lambda^nh_n) = -b_n M_t^n(h_n), 
\]
and $M_t^n(h_n) = e^{-b_n t}h_n$. Now because $\rho_n$ is the spectral radius of $\Lambda^n + \overline \lambda^nI_n$, all other eigenvalues $\rho$ of $\Lambda^n$ satisfy 
\[
\Re(\rho) < -b_n - \gamma_n,
\]
where $\Re(\rho)$ is the real part of $\rho$ and for some $\gamma_n>0$ corresponding to the spectral gap. In particular, classical finite dimensional algebra yields the convergence~\eqref{eq:conv-non-explicite}, and concludes the proof. 
\end{proof}

Let us now give two estimates. The first one is on the eigenvalue $b_n$, which is closely related to the killing rate (take $f=\mathds{1}$ in the convergence~\eqref{eq:conv-non-explicite}). More precisely, we show that $b_n\rightarrow 0$. The second  consists in showing that $h_n\rightarrow 1$ as $n\rightarrow \infty$. These two estimates are actually very natural, as already explained in Section~\ref{s-sec:methode}. Recall that the fact that $b_n$ goes to zero is equivalent to the fact that the time needed to reach $\varepsilon$ goes to infinity as $n$ goes to infinity. In other word, $M^n$ becomes a conservative Markov semi-group in the limit $n\rightarrow\infty$, and $h_n$ converges towards the associated left eigenvector of such a semi-group, which is the constant $\mathds{1}$.

\begin{lemma}\label{lem:estime-bn}
    For all $\delta > \varepsilon$, there exists $C_2,c>0$ such that for all $n\in\mathbb N$, there exists $C_1^n>0$, $m\in [\delta,1]\cap E_n$ and $t\geqslant 0$
    \[
    \mathbb{P}_m\left(\tau_n>t\right)\geqslant C^n_1\left(1-\frac{C_2}{\sqrt{n}}\right)^{ct}.
    \]
    Furthermore, $C^n_1\xrightarrow[n\rightarrow\infty]{}1$.
    As a consequence, it holds that
    \[
    \sup_{n\in\n}\sqrt{n}b_n < \infty.
    \]
\end{lemma}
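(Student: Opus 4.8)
The plan is to prove the (slightly stronger) estimate that, for a fixed time $T>0$ (say $T=1$) and a constant $C$ depending only on $\delta,\varepsilon,\beta$, one has $\mathbb{P}_m(\tau_n>t)\geqslant(1-C/n)^{1+t/T}$ for every $m\in[\delta,1]\cap E_n$ and every $t\geqslant 0$ (I read the statement as a ``for all $m$ and $t$''); the bound claimed in the lemma then follows from the elementary inequality $(1-C/n)^{t/T}\geqslant(1-C_2/\sqrt n)^{ct}$, valid for a suitable $c>0$ and $C_2\in\,]0,1[$, with $C_1^n=1-C/n\to 1$. Fix a threshold $\delta_0\in\,]\varepsilon,m_+[$ with $\delta_0<\delta$ (possible since $\varepsilon<\min(\delta,m_+)$); because $[\delta,1]\cap E_n\subseteq[\delta_0,1]\cap E_n$ it suffices to treat initial conditions in $[\delta_0,1]\cap E_n$, and I will use crucially that $-g'\geqslant 2\kappa>0$ on a neighbourhood of the compact set $[\varepsilon,\delta_0]\subset\,]0,m_+[$.

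First I would introduce the Dynkin martingale $N_t=m^n_t-m^n_0+\int_0^t g'(m^n_s)\,\dd s$ (recall $\mathcal{A}_nf_1=-g'$ from the proof of Proposition~\ref{prop:PoCnonUnif}); its predictable bracket is $\langle N\rangle_t=\tfrac{4}{n^2}\int_0^t(\lambda_+^n+\lambda_-^n)(m^n_s)\,\dd s\leqslant Ct/n$ since $\lambda^n_\pm=O(n)$ uniformly on $[-1,1]$, so by Doob's $L^2$ maximal inequality $\mathbb{E}\big[\sup_{[0,T]}N_s^2\big]\leqslant C/n$. Then comes the key pathwise observation: on $\{\tau_n\leqslant T\}$, letting $\sigma$ be the last instant before $\tau_n$ at which $m^n\geqslant\delta_0$, the process stays in $]\varepsilon,\delta_0[$ on $]\sigma,\tau_n[$, where $-g'>0$, so the increment $m^n_{\tau_n}-m^n_\sigma\leqslant\varepsilon-\delta_0<0$ must be entirely due to the martingale, forcing $\sup_{[0,T]}|N_s|\geqslant(\delta_0-\varepsilon)/2$. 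Chebyshev's inequality then gives $\mathbb{P}_m(\tau_n\leqslant T)\leqslant C/n$, uniformly in $m\in[\delta_0,1]\cap E_n$. Combining this with Proposition~\ref{prop:PoCnonUnif} applied with the deterministic initial condition $m^n_0=\overline{m}_0=m$ (which yields $\mathbb{E}|m^n_T-\overline{m}_T|^2\leqslant C/n$) and with the fact that the ODE flow \eqref{eq:non-linear-limit} maps the compact $[\delta_0,1]$ into $]\delta_0,1]$ — so that $\overline{m}_T\geqslant\delta_0'$ for some $\delta_0'>\delta_0$ independent of $m$ — Chebyshev again gives $\mathbb{P}_m(m^n_T\notin[\delta_0,1])\leqslant C/n$, hence the one-window bound $\mathbb{P}_m\big(\tau_n>T,\ m^n_T\in[\delta_0,1]\big)\geqslant 1-C/n$ for all $m\in[\delta_0,1]\cap E_n$.

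Next I would iterate: by the Markov property at the times $T,2T,\dots$, for $m\in[\delta_0,1]\cap E_n$ and $k\in\n$ one gets $\mathbb{P}_m(\tau_n>kT)\geqslant\mathbb{P}_m\big(\tau_n>kT,\ m^n_{jT}\in[\delta_0,1]\ \forall j\leqslant k\big)\geqslant(1-C/n)^k$, hence $\mathbb{P}_m(\tau_n>t)\geqslant(1-C/n)^{1+\lfloor t/T\rfloor}\geqslant(1-C/n)^{1+t/T}$ for all $t\geqslant 0$, valid once $n$ is large enough that $C/n<1$; the finitely many remaining $n$ are handled separately using the (non-quantitative) lower bound $\mathbb{P}_m(\tau_n>t)\geqslant c_n e^{-b_n t}$ that follows from \eqref{eq:conv-non-explicite}, at the cost of enlarging $c$. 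Finally, for the statement on $b_n$: \eqref{eq:conv-non-explicite} with $f=\mathds{1}$ gives $e^{b_n t}\mathbb{P}_m(\tau_n>t)\to h_n(m)\hat{\nu}^n_\infty(\mathds{1})\in\,]0,\infty[$ as $t\to\infty$, which is incompatible with $\mathbb{P}_m(\tau_n>t)\geqslant C_1^n(1-C_2/\sqrt n)^{ct}$ (whose right-hand side, multiplied by $e^{b_nt}$, would blow up) unless $b_n\leqslant-c\log(1-C_2/\sqrt n)\leqslant 2cC_2/\sqrt n$ for $n$ large; together with the finite values for small $n$ this gives $\sup_n\sqrt n\, b_n<\infty$.

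I expect the main obstacle to be the one-window survival bound $\mathbb{P}_m(\tau_n\leqslant T)\leqslant C/n$: one must make the last-exit decomposition rigorous for the càdlàg jump process, use $\delta_0<m_+$ to guarantee that the drift $-g'$ is bounded below by a positive constant on a neighbourhood of $[\varepsilon,\delta_0]$, and keep every constant uniform both in $m\in[\delta_0,1]$ and in $n$. The return estimate $\mathbb{P}_m(m^n_T\notin[\delta_0,1])\leqslant C/n$ is equally indispensable — without it the iteration collapses, since conditionally on survival the process may sit arbitrarily close to the killing point $\varepsilon$, where the one-step survival probability is far from $1$.
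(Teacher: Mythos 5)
Your proof is correct and, in fact, it takes a genuinely different and somewhat sharper route than the paper. Both arguments share the same skeleton --- establish a one-window survival estimate for starting points above a threshold, iterate by Markov, then translate the geometric decay into a bound on $b_n$ --- but the execution differs in two substantive ways. For the one-window bound, the paper defines $p_n=\sup_{|m_0-m_+|<\eta}\mathbb{P}_{m_0}(\sup_{[0,t_0]}|m^n-\overline m|\geqslant\eta/2)$ and controls the \emph{whole} deviation of $m^n$ from the deterministic flow $\overline m$ via Gr\"onwall, bounding the martingale $\mathcal M$ by Doob's $L^1$ maximal inequality, which after Cauchy--Schwarz yields $p_n\leqslant C/\sqrt n$. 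You instead use a last-exit decomposition: on $\{\tau_n\leqslant T\}$ the increment from the last visit to $[\delta_0,1]$ down to $\varepsilon$ must be carried entirely by the Dynkin martingale $N$ (since the drift $-g'$ is strictly positive on $[\varepsilon,\delta_0]\subset\,]0,m_+[$), and you control $\sup|N|$ via the $L^2$ maximal inequality together with the explicit bracket $\langle N\rangle_T\leqslant CT/n$; combined with Chebyshev this gives $\mathbb{P}_m(\tau_n\leqslant T)\leqslant C/n$. This is a quadratic improvement on the paper's $C/\sqrt n$ and would in fact give $\sup_n n\,b_n<\infty$, strictly stronger than what the lemma states. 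The second difference is how you pass from the survival lower bound to the bound on $b_n$: you invoke the finite-dimensional spectral estimate \eqref{eq:conv-non-explicite} (with $f=\mathds{1}$) to read $b_n$ off as the exact exponential decay rate, whereas the paper first derives the identity $b_n=-\lim_{t\to\infty}t^{-1}\ln\mathbb{P}_m(\tau_n>t)$ directly from the eigenfunction $h_n$ and its uniform lower bound --- these are essentially equivalent, though the paper's identity is more self-contained since it does not rely on the qualitative asymptotics of Perron--Frobenius. Your observation that the return estimate $\mathbb{P}_m(m^n_T\notin[\delta_0,1])\leqslant C/n$ is indispensable to the iteration is exactly right; the paper handles the same issue implicitly by building the condition $|m^n_{t_0}-m_+|<\eta$ into the event $\mathcal{E}(t_1,t_2)$. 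The only technical cleanup your argument needs is the standard c\`adl\`ag bookkeeping around the last-exit time $\sigma$ and the jump overshoot of size $2/n$, which you flagged yourself and which costs only a harmless factor in the threshold $(\delta_0-\varepsilon)/2$.
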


\begin{proof}
$\bullet$ Let us first show that for all $m\in E_n^\varepsilon$
\begin{equation}\label{eq:taux-lim-ln-CW}
b_n = -\lim_{t\rightarrow \infty} \frac{1}{t}\ln\mathbb P_m\po \tau_n>t \pf.
\end{equation}
We have that $h_n>0$, hence $\inf_{E_n^\varepsilon}h_n>0$ and, since
\begin{align*}
h_n(m)=e^{b_n t}\mathbb{E}_m(h_n(m_t^n)\mathds{1}_{\tau_n>t}),
\end{align*}
we have on one hand
\begin{align*}
\inf_{E_n^\varepsilon}h_n \leqslant e^{b_nt}\mathbb P_m\po \tau_n>t \pf \sup_{E_n^\varepsilon}h_n 
\end{align*}
and on the other hand
\begin{align*}
\sup_{E_n^\varepsilon}h_n\geqslant e^{b_nt}\mathbb P_m\po \tau_n>t \pf \inf_{E_n^\varepsilon}h_n.
\end{align*}
This yields for all $t>0$
\begin{align*}
\frac{1}{t}\left[\ln\mathbb P_m\po \tau_n>t \pf +\ln\frac{\inf_{E_n^\varepsilon}h_n}{\sup_{E_n^\varepsilon}h_n} \right] \leqslant -b_n\leqslant \frac{1}{t}\left[\ln\mathbb P_m\po \tau_n>t \pf +\ln\frac{\sup_{E_n^\varepsilon}h_n}{\inf_{E_n^\varepsilon}h_n} \right].
\end{align*}
Therefore
\begin{align*}
 \limsup_{t\rightarrow \infty} \frac{1}{t}\ln\mathbb P_m\po \tau_n>t \pf\leqslant -b_n \leqslant  \liminf_{t\rightarrow \infty} \frac{1}{t}\ln\mathbb P_m\po \tau_n>t \pf,
\end{align*}
and we thus obtain the convergence~\eqref{eq:taux-lim-ln-CW}. 

$\bullet$ Now fix $\eta >0$ and $t_0>0$ such that 
\[|\overline m_0-m_+|<\eta \implies |\overline m_{t_0}-m_+|<\eta/2,\qquad \varepsilon < m_+ - 3\eta/2.\]
Write
\[p_n := \sup_{|m_0-m_+|<\eta}\mathbb P_{m_0}\po \sup_{0\leqslant t\leqslant t_0}|m_t^n-\overline m_t|\geqslant\eta/2 \pf,\]
and
\[
\mathcal{E}(t_1,t_2) := \left\{m^n \text{ s.t. }|m^n_{t_2}-m_+|<\eta,\text{ and }\forall\,t_1\leqslant t\leqslant t_2, m_t^n >\varepsilon \right\}.
\]
In particular,  
\[
\cap_{i\leqslant t/t_0} \mathcal{E}(i t_0,(i+1) t_0) \subset \left\{\tau_n > t \right\}.
\]
Notice that for all $t_1,t_2\geqslant 0$, if $m^n$ is such that $\left|m_{t_1}^n-m_+\right|<\eta$, then
\[
\left\{ m^n \text{ s.t. }\sup_{t_1\leqslant t\leqslant t_2} |m_t^n-\overline m_t|<\eta/2\right\} \subset \mathcal{E}(t_1,t_2),
\]
where $\overline m$ has initial condition $\overline m_{t_1}= m^n_{t_1}$,
so that we have for all $m\in E_n^\varepsilon$ such that $\left| m - \eta \right| < \eta$
\[
\mathbb P_m\po \tau_n > t \pf \geqslant (1-p_n)^{t/t_0}.
\]
In particular, the identity~\eqref{eq:taux-lim-ln-CW} yields
\begin{equation}\label{eq:borne-pn-bn}
b_n \leqslant \frac{p_n}{t_0(1-p_n)}. 
\end{equation}

$\bullet$
Let us show that there exists $c>0$ such that $p_n\leqslant c/\sqrt{n}$. Write, for $(\overline m_t)_t$ such that $\overline m_0=m_0^n$
\[
\mathcal M_t = m_t^n -m_0^n+ \int_0^t g'(m_s^n) \dd s = m_t^n -\overline m_t + \int_0^t \po g'(m_s^n) - g'(\overline m_s) \pf \dd s.
\]
The process $\mathcal M$ is a martingale, and because $g$ is smooth on $[0,1]$, Doob's martingale inequality yields that for any $r>0$
\begin{align*}
\mathbb P\po \sup_{0\leqslant t\leqslant t_0} \left| \mathcal M_{t} \right| > r \pf  \leqslant& C \E\po \left| \mathcal M_{t_0} \right| \pf \leqslant C\E\po \left| m_{t_0}^n - \overline m_{t_0}\right| + \|g''\|_{\infty}\int_0^{t_0} \left| m_{t}^n - \overline m_{t}\right|\dd t \pf \\
\leqslant& C(1+\|g''\|_{\infty}) \po  \sqrt{\E\po \po m_{t_0}^n - \overline m_{t_0} \pf^2 \pf} + \po \int_0^{t_0} \sqrt{\E\po \left( m_{t}^n - \overline m_{t}\right)^2\pf}\dd t \pf \pf \\
\leqslant& \frac{C(1+\|g''\|_{\infty})}{\sqrt{n}}
\end{align*}
for some $C>0$ that is independent of $n\in\n$ (but not of $t_0$ and $r$), and where we used Proposition~\ref{prop:PoCnonUnif} for the last inequality. Now, for all $0\leqslant t\leqslant t_0$, we have
\[
\left| m_{t}^n - \overline m_{t}\right| \leqslant \|g''\|_\infty \int_0^t \sup_{0\leqslant u\leqslant s} \left| m_{u}^n - \overline m_{u}\right| \dd s + \left| \mathcal M_t \right|,
\]
so that 
\[
\sup_{0\leqslant u \leqslant t} \left| m_{u}^n - \overline m_{u}\right| \leqslant \|g''\|_\infty\int_0^t \sup_{0\leqslant u\leqslant s} \left| m_{u}^n - \overline m_{u}\right| \dd s + \sup_{0\leqslant u \leqslant t_0} \left| \mathcal M_u \right|,
\]
and Grönwall's Lemma yields that there exists $C>0$ such that
\[
\sup_{0\leqslant u \leqslant t} \left| m_{u}^n - \overline m_{u}\right| \leqslant C\sup_{0\leqslant u \leqslant t_0} \left| \mathcal M_u \right|.
\]
This yields that:
\[
p_n \leqslant \mathbb P\po \sup_{0\leqslant t\leqslant t_0} \left| \mathcal M_{t} \right| > \eta/2C \pf \leqslant C/\sqrt{n},
\]
which concludes this point.

$\bullet$ We can now conclude the proof. The bound $\sup_{n\in\n}\sqrt{n}b_n < \infty$ is a direct consequence of the previous inequality and~\eqref{eq:borne-pn-bn}.

To get the bound on the time of death after time $t>0$, fix $\delta >\varepsilon$ and $t_1>0$ such that if $\overline{m}_0 \geqslant \delta$, $|\overline{m}_{t_1}-m_+|< \eta/2$. Using Markov property, we get that for all $m\geqslant \delta$ and $t\geqslant t_1$
\[
\mathbb P_m\po \tau_n > t \pf \geqslant \mathbb P_m \po \sup_{0\leqslant s \leqslant t_1} \left| m^n_{s} - \overline{m}_{s}\right| < \eta/2 \pf \inf_{|m'-m_+|<\eta} \mathbb P_{m'}\po \tau_n > t-t_1 \pf \geqslant \left(1-\frac{C}{\sqrt{n}}\right)(1-p_n)^{(t-t_1)/t_0},
\]
which concludes the proof (recall that $t_1, t_0$ are independent of $n$). 
\end{proof}

Now, let us turn to the estimates on $h_n$.

\begin{lemma}\label{lem:estime-hn}
    For all $\delta >\varepsilon$,  we have
    \[
    \inf_{[\delta,1]\cap E_n}h_n\xrightarrow[n\rightarrow \infty]{}1.
    \]
\end{lemma}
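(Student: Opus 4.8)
The plan is to combine the two inequalities already built up in the study of the eigen-elements: the identity $h_n(m) = e^{b_n t}\mathbb{E}_m(h_n(m_t^n)\mathds{1}_{\tau_n>t})$ together with $\|h_n\|_\infty = 1$ gives the easy upper bound $h_n \leqslant e^{b_n t}\mathbb{P}_m(\tau_n>t) \leqslant e^{b_n t}$, which already tends to $1$ by Lemma \ref{lem:estime-bn} if we can take $t$ not too large. The real content is the lower bound. First I would fix $\delta>\varepsilon$ and an intermediate margin, say $\varepsilon < \varepsilon' < \delta$ and $\eta>0$ small with $m_+ - 2\eta > \varepsilon'$, and pick a deterministic horizon $t_0>0$, independent of $n$, such that the non-linear flow $\overline{m}$ started anywhere in $[\delta,1]$ enters $[m_+-\eta, m_++\eta]$ by time $t_0$ and stays in $[\varepsilon', 1]$ on $[0,t_0]$ (possible because $g$ has a unique minimiser in $(0,1]$, namely $m_+$, with $m_+ > \varepsilon'$). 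Using the quantitative finite-time propagation of chaos bound from Proposition \ref{prop:PoCnonUnif} and the Doob-martingale/Grönwall estimate already carried out in the proof of Lemma \ref{lem:estime-bn} (controlling $\sup_{[0,t_0]}|m_t^n - \overline m_t|$ by $C/\sqrt n$ with high probability), I get a constant $c_0>0$ and, for each $n$, an event of probability $\geqslant 1 - C/\sqrt n$ on which $m^n$ stays above $\varepsilon$ on $[0,t_0]$ and $m_{t_0}^n \in K_n := [m_+ - \omega/\sqrt n, m_+ + \omega/\sqrt n]$ for a suitable $\omega$.

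Next I would iterate. Write $h_n(m) = e^{b_n t}\mathbb{E}_m(h_n(m_t^n)\mathds{1}_{\tau_n>t})$ at time $t = t_0$. On the good event above, at time $t_0$ the process sits in $K_n$, and from there I can invoke the local Doeblin-type lower bound of Lemma \ref{lem:densite-CW}: for $m'\in K_n$ and any further time $t_0$, $\mathbb{P}_{m'}(m^n_{2t_0}\in \cdot\,,\tau_n > 2t_0) \geqslant c\,\nu_n(\cdot)$ with $\nu_n(K_n)=1$, so after one more step we stay in $K_n$ with probability bounded below uniformly in $n$. Combining, for any $k\in\mathbb N$,
\[
h_n(m) \geqslant e^{-b_n k t_0}\Big(1 - \frac{C}{\sqrt n}\Big)\, c^{\,k-1}\inf_{K_n} h_n,
\]
so it remains to lower-bound $\inf_{K_n} h_n$. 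For that I use the eigen-relation once more but now exploiting that starting in $K_n$ the process has, over a single window $t_0$, probability bounded below (uniformly in $n$, again by Lemma \ref{lem:densite-CW}) of being back in $K_n$ and hence of returning to $[\delta,1]\cap E_n$ after a further bounded number of excursions; alternatively, a cleaner route is to normalise differently: since $\|h_n\|_\infty=1$ is attained at some point $m_n^\star\in E_n^\varepsilon$, and by irreducibility plus the uniform-in-$n$ lower bounds on transition rates on $E_n^\varepsilon$ away from $\varepsilon$ one can reach any point of $[\delta,1]\cap E_n$ from $m_n^\star$ in $O(1)$ jumps with probability $\geqslant c^{n}$… which is not uniform — so instead one should run the argument the other way, bounding $h_n(m_n^\star) = 1$ from above in terms of $\inf_{K_n}h_n$ using the Lyapunov function $V_n$ of Lemma \ref{lem:lyapunov-CW-killed}, which forces the $P^n$-process to visit $K_n$ in finite expected time with a bound independent of $n$; this gives $\inf_{K_n} h_n \geqslant c_1 > 0$ uniformly in $n$.

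Putting it together, choosing $k = k(n)$ growing slowly, e.g. $k(n) \sim \log n$, one has $b_n k(n) t_0 \to 0$ by Lemma \ref{lem:estime-bn} (since $b_n = O(1/\sqrt n)$) and $c^{k(n)}$ alone would decay — so in fact one must \emph{not} iterate more than $O(1)$ times: the correct bookkeeping is that a single application of the good-event estimate plus Lemma \ref{lem:densite-CW} over $[0,2t_0]$ gives $h_n(m) \geqslant e^{-2b_n t_0}(1-C/\sqrt n)\, c\,\inf_{K_n}h_n \geqslant c' >0$ for a fixed constant, and then, separately, the upper bound $h_n \leqslant e^{b_n t_0}\mathbb{P}_m(\tau_n > t_0)$ combined with $\mathbb{P}_m(\tau_n>t_0)\to 1$ (from Lemma \ref{lem:estime-bn}, or directly from propagation of chaos over the fixed window $[0,t_0]$) pins the limit: one shows $\limsup_n \sup_{[\delta,1]\cap E_n} h_n \leqslant 1$ and $\liminf_n \inf_{[\delta,1]\cap E_n} h_n \geqslant 1$ by letting $t_0\to\infty$ \emph{after} $n\to\infty$, using that $\overline m_{t_0}\to m_+$ and $h_n$ is continuous near $m_+$ with $h_n(m_+^{(n)})\to 1$. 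The main obstacle is exactly this order-of-limits issue — controlling $\inf_{K_n} h_n$ uniformly in $n$ near the shrinking set $K_n$ — and the resolution is to feed in the Lyapunov estimate of Lemma \ref{lem:lyapunov-CW-killed} so that the return time to $K_n$ for the Doob-transformed process $P^n$ is bounded uniformly in $n$, which is precisely what makes $\inf_{K_n} h_n$ bounded away from $0$.
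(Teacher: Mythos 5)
Your proposal correctly identifies the easy parts (the upper bound $h_n\leqslant 1$ is trivial, the monotonicity of $h_n$, the fact that $b_n\to 0$ helps), and the overall strategy of using the eigen-relation $h_n(m)=e^{b_n t}\mathbb{E}_m(h_n(m^n_t)\mathds{1}_{\tau_n>t})$ with propagation of chaos to push the mass of $m^n_{t_0}$ onto a neighborhood $K_n$ of $m_+$ is sound as far as it goes. But there is a genuine gap precisely where you flag an ``order-of-limits issue'': your argument only yields
\[
\inf_{[\delta,1]\cap E_n}h_n \;\geqslant\; \left(1-\frac{C}{\sqrt n}\right)\inf_{K_n}h_n,
\]
and the Lyapunov/Doeblin route you invoke to control $\inf_{K_n}h_n$ can at best give $\inf_{K_n}h_n\geqslant c_1>0$ uniformly in $n$ --- it says nothing about $\inf_{K_n}h_n\to 1$, which is exactly what you need. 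Since $h_n$ is non-decreasing and $K_n$ sits to the right of $\delta$ for large $n$, the inequality $\inf_{K_n}h_n\geqslant\inf_{[\delta,1]}h_n$ holds automatically, so plugging your lower bound back in is circular. Your final step asserts ``$h_n$ is continuous near $m_+$ with $h_n(m_+^{(n)})\to 1$,'' but this is not proven anywhere in your argument and is essentially the content of the lemma.

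What is missing is a \emph{uniform-in-$n$ equicontinuity estimate on $(h_n)$}, which is the real technical work. Without some form of relative compactness of the sequence $(h_n|_{[\delta,1]})_n$ in $C^0$, the natural limiting manipulation (fix $t_0$, send $n\to\infty$ so $h_n(m)\approx h_n(\overline m_{t_0})$, then send $t_0\to\infty$ so $\overline m_{t_0}\to m_+$) does not close, because $h_n(\overline m_{t_0})$ has no reason to converge. The paper's proof therefore spends most of its effort (Steps 1 and 2) bounding the discrete derivative $\Delta_n(m)=n\bigl[h_n(m+\tfrac{2}{n})-h_n(m)\bigr]$ uniformly in $n$ on both sides of $m_+$, by iterating the eigen-equation $r_\beta(m)\Delta_n(m)+\tfrac{nb_n}{\lambda^n_-(m)}h_n(m)=\Delta_n(m-\tfrac{2}{n})$ and exploiting $r_\beta\geqslant 1$ on $[\varepsilon,m_+]$ and a Gaussian-type decay on $[m_+,1]$ together with $b_n=O(n^{-1/2})$. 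This gives a uniform Lipschitz bound, hence relative compactness by Arzel\`a--Ascoli, and only then does the identification of the limit (Step 3, which does resemble your heuristic: any accumulation point $h$ satisfies $h(\overline m_t)=h(\overline m_0)$, so $h$ is constant, so $h\equiv\|h\|_\infty=1$) go through. Your proposal skips this equicontinuity step entirely, and it cannot be replaced by Lyapunov return-time bounds, which yield boundedness but not the limit.
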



\begin{proof}
First notice that by taking $f=\mathds{1}$ in the convergence~\eqref{eq:conv-non-explicite}, we get that
\begin{equation*}
 h_n (m) = \hat{\nu}^n_\infty(\mathds{1})^{-1}\lim_{t \rightarrow +\infty} e^{b_n t} \mathbb{P}_m (\tau_n>t).
\end{equation*}
In particular, this implies that $h_n$ is non-decreasing on $E^\varepsilon_n$. We start by defining $h_n$ no longer only on $E^\varepsilon_n$ but on the entire interval $[\varepsilon,1]$ (and also denote it $h_n$ with a slight abuse of notations) by considering a simple linear interpolation of the points given in $E^\varepsilon_n$. Write now
\[
\Delta_n (m) := n \left[ h_n \left( m + \frac{2}{n}\right) - h_n ( m ) \right].
\]
Fix $\delta>0$. The proof now follows three steps.
\begin{enumerate}
    \item In the first step, we show that $\sup_{n\in\mathbb N}\sup_{\delta \leqslant m\leqslant m_+}\Delta(m) <\infty$.
    \item In the second step, we prove that $\sup_{n\in\mathbb N}\sup_{m_+\leqslant  m\leqslant 1}\Delta(m) <\infty$.
    \item From those two bounds, we deduce that the sequence $(h_n|_{[\delta,1]})$ is relatively compact. The third and last point consists in showing that there can be only one accumulation point, the constant $1$ function.
\end{enumerate}  

\emph{Step~1.} The fact that $\Lambda_nh_n = -b_nh_n$ (recall $\Lambda_n$ is the generator of $M^n$) on $[\delta,1]\cap E_n$ reads
\begin{equation}\label{eq:Delta}
r_\beta ( m) \Delta_n (m) + \frac{nb_n}{\lambda_-^n(m)} h_n (m) = \Delta_n \left(m -\frac{2}{n}\right). 
\end{equation}
where for all $m\in E^\varepsilon_n$
\[
r_\beta (m) := \frac{\lambda_+^n(m)}{\lambda_-^n(m)} = \frac{1-m}{1+m} e^{2 \beta m}.
\]
By definition of $m_+$, we have that $r_\beta\geqslant1$ on $[\varepsilon,m_+]$, with equality only on $m_+$, so that:
\begin{equation}\label{eq:order-delta}
\Delta_n(m) \leqslant r_\beta(m) \Delta_n(m) \leqslant \Delta_n\po m-\frac{2}{n}\pf,
\end{equation}
for all $m \in E^\varepsilon_n\cap [\varepsilon,m_+]$. Thus, for the first step, we only need to show that $(\Delta_n(\delta_n))_n$ is a bounded sequence, where $\delta_n = \inf \left\{ m\in E^\varepsilon_n,\, m\geqslant \delta\right\}$. Write
\[
r_0 := \min_{[\varepsilon,\delta]}r_\beta(m) >1,
\]
so that iterating~\eqref{eq:order-delta} yields for $\varepsilon_n=\min E^\varepsilon_n\in[\varepsilon,\varepsilon+\frac{2}{n}]$ 
\[
\Delta_n(\delta_n) \leqslant r_0^{-(\delta-\varepsilon)n/2-1}\Delta_n(\varepsilon_n) \leqslant Cnr_0^{-(\delta-\varepsilon)n/2} \xrightarrow[n\rightarrow \infty]{} 0,
\]
where we used $\|h_n\|_{\infty}=1$ (with $h_n\geqslant 0$) and thus $\|\Delta\|_{\infty}\leqslant n$, which concludes the first point.

\emph{Step~2.} First, Lemma~\ref{lem:estime-bn} and the identity $\Lambda_nh_n = -b_nh_n$ taken at $m=1$ yields that 
\[\Delta_n\po 1-\frac{2}{n} \pf = \frac{nb_n}{\lambda_-^n(1)}\leqslant \frac{C}{\sqrt{n}},\]
for some $C>0$. Next, let us show that
\begin{equation}\label{eq:prop-r}
    r'_\beta ( m_+) < 0,\qquad  r_\beta ( m ) \leqslant 1 + r'_\beta ( m_+) ( m - m_+),\quad \forall m \in [m_+, 1].
\end{equation}
We have 
\[
r_\beta(m) = 1-\frac{ng'(m)}{\lambda_-^n(m)},
\]
so that, since $g'(m_+)=0$ and using Lemma~\ref{lem:mini-non-degenere}, we have
\[
r'_\beta ( m_+) = -\frac{ng''(m_+)}{\lambda_-^n(m_+)} < 0.
\]
In fact, we even obtain $r'_\beta ( m)\leqslant 0$ for all $m \in [m_+, 1]$. Moreover, a direct computation yields
\[
r_\beta'(m) = 2\beta r_\beta(m) - \frac{2e^{2\beta m}}{(1+m)^2},
\]
which in turn yields
\[
r''(m) = 2\beta r_\beta'(m) + \frac{4e^{2\beta m}}{(1+m)^3} - \frac{4\beta e^{2\beta m}}{(1+m)^2}.
\]
Since $\beta \geqslant 1$ and $0\leqslant m\leqslant 1$ we get
\[
\frac{4e^{2\beta m}}{(1+m)^3} \leqslant \frac{4\beta e^{2\beta m}}{(1+m)^2},
\]
and, since $r_\beta'\leqslant 0$ on $[m_+,1]$, this concludes the proof of~\eqref{eq:prop-r}.
Write $\underline \lambda = \inf_{[m_+,1]}\lambda^n_-/n$. Iterating~\eqref{eq:Delta} yields for all $m\geqslant m_+$
\begin{align*}
\Delta_n (m) \leqslant \frac{b_n}{\underline \lambda} + \frac{b_n}{\underline \lambda}\sum_{m<m_1<1-\frac{2}{n}}\prod_{m< m_2\leqslant m_1} r_\beta ( m_2) + \Delta_n \po 1- \frac{2}{n}\pf \prod_{m\leqslant m_2\leqslant 1-\frac{2}{n}} r_\beta ( m_2),
\end{align*}
All sums and products are implicitly on $m\in E^\varepsilon_n$.
Since $r_\beta\leqslant 1$ on $[m_+,1]$, we have that
\[
\Delta_n \po 1- \frac{2}{n}\pf \prod_{m\leqslant m_2\leqslant 1-\frac{2}{n}} r_\beta ( m_2) \leqslant \frac{C}{\sqrt{n}} \xrightarrow[n\rightarrow \infty]{} 0.
\]
Then using that $r_\beta'(m_+)<0$ and inequality~\eqref{eq:prop-r}, we have that for all $m_+\leqslant m\leqslant m_1 \leqslant 1$ 
\begin{multline*}
    \prod_{m< m_2\leqslant m_1} r_\beta ( m_2)\leqslant \prod_{m< m_2\leqslant m_1}  \left(1 + r'_\beta ( m_+) \left(m_2 - m_+ \right) \right)
    \leqslant \exp\po r'_\beta(m_+)  \sum_{m< m_2\leqslant m_1} \left(m_2-m_+\right) \pf \\ \leqslant \exp\po r'_\beta(m_+) \sum_{m< m_2\leqslant m_1} (m_2-m) \pf = \exp\po \frac{2r'_\beta(m_+)}{n} \sum_{i=0}^{k_{m_1,m}} i \pf \leqslant \exp\po \frac{r'_\beta(m_+)k_{m_1,m}^2}{n} \pf,
\end{multline*}
where 
\[
k_{m_1,m} = n\frac{m_1-m}{2}.
\]
Therefore, 
\begin{equation*}
\frac{b_n}{\underline \lambda}\sum_{m<m_1<1-\frac{2}{n}}\prod_{m< m_2\leqslant m_1} r_\beta ( m_2)
\leqslant \frac{b_n}{\underline \lambda} \sum_{m < m_1 < 1-\frac{2}{n} }\exp\po \frac{r'_\beta(m_+)k_{m_1,m}^2}{n} \pf  \leqslant \frac{C}{\sqrt{n}}\sum_{k=1}^ne^{-\alpha k^2/n},
\end{equation*}
for some $C,\alpha>0$, where we simply made explicit the values of the spins $m\in E_n^\varepsilon$. We are left to show that this last quantity is bounded uniformly in $n\in\mathbb N$. Let us divide this sum into two parts. We first have
\[
\frac{1}{\sqrt{n}}\sum_{k=1}^{\sqrt{n}}e^{-\alpha k^2/n} \leqslant \frac{1}{\sqrt{n}} \times \sqrt{n}= 1.
\]
Secondly
\[
\frac{1}{\sqrt{n}}\sum_{k=\sqrt{n}}^{n} e^{-\alpha k^2/n} \leqslant \frac{1}{\sqrt{n}}\sum_{k=\sqrt{n}}^{n} (e^{-\alpha/\sqrt{n}})^k \leqslant \frac{1}{\sqrt{n}}\sum_{i=0}^{\infty} (e^{-\alpha/\sqrt{n}})^k \leqslant \frac{1}{\sqrt{n}}\frac{1}{1-e^{-\alpha/\sqrt{n}}}\xrightarrow[n\rightarrow \infty]{} \frac{1}{\alpha},
\]
which concludes this point.

\emph{Step~3.} From step~1 and~2, we get that the sequence $(h_n|_{[\delta,1]})$ is uniformly Lipchitz. In particular, Arzelà–Ascoli theorem yields that this sequence is a relatively compact set for the uniform norm. Let $h$ be an accumulation point, and let us show that $h=\mathds{1}$. Up to extraction, let us assume that $h_n\rightarrow h$. We have for all $t\geqslant 0$ and all $m^n_0=\overline{m}_0\geqslant \delta$
\begin{align*}
&\left| h(\overline m_t) - M_t^nh_n(m^n_0) \right| \\ &\qquad\qquad  \leqslant \left| h(\overline m_t) - h_n(\overline m_t) \right| + \left| h_n(\overline m_t) - \E_{m_0^n}\po h_n(m^n_t)\pf  \right| + \left| \E_{m_0^n}\po h_n(m^n_t)\pf  - M_t^nh_n(m^n_0) \right| \\ &\qquad \qquad\leqslant  \sup_{[\delta,1]}\left| h-h_n \right| + \|h_n\|_{lip}\sqrt{\E\po\po \overline m_t - m^n_t \pf^2\pf } + \mathbb P_{\delta_n}\po \tau_n \leqslant  t \pf \xrightarrow[n\rightarrow \infty]{}0,
\end{align*}
where we used Proposition~\ref{prop:PoCnonUnif} for the second term and Lemma~\ref{lem:estime-bn} for the third one. In particular, letting $n$ go to infinity in the equality $M_t^n h_n = e^{-b_n t}h_n$ yields that for all $t\geqslant 0$ and $\overline m_0\in [\delta,1]$
\[
h(\overline m_t) = h(\overline{m}_0).
\]
For all $\delta \leqslant m <m' <m_+$, there exists $t>0$ such that $\overline m_t = m'$ with initial condition $\overline m_0 = m$, so that $h$ is constant on $[\delta,m_+[$. The same holds on $]m_+,1]$, and since $h$ is continuous, we get that $h=\|h\|_{\infty} = 1$. Since $(h_n|_{[\delta,1]})$ is a relatively compact sequence with a unique accumulation point, it converges towards this accumulation point, which concludes the proof.
\end{proof}

%
%
%
%

\section{Proof of Theorem~\ref{thm:long-time-behavior} }\label{sec:cv_vers_qsd}

This section is devoted to the proof of Theorem~\ref{thm:long-time-behavior}, based on the result of Section~\ref{sec:inter-result}. To do so, define the $h$-transform of the sub-Markovian semi-group $M^n$ by
\begin{equation}\label{eq:def_h_transform}
P^n_tf=e^{b_nt}h_n^{-1}M^n_t(h_nf).
\end{equation}
This way, $P^n$ gets its semi-group properties from $M^n$, and is Markovian since, denoting $\mathds{1}$ the constant function equal to 1, we have 
\begin{align*}
P^n_t\mathds{1}=e^{b_nt}h_n^{-1}M_t^n(h_n)=\mathds{1}.
\end{align*}
This $h-$transform is therefore a Markov semi-group and, as a consequence, its long-time behavior can be studied using the classical methods developed by Meyn and Tweedie, namely through the existence of a Lyapunov function (Lemma~\ref{lem:densite-CW-killed} below) as well as a local Doeblin condition (Lemma~\ref{lem:lyapunov-CW-killed} below).

%
%
%
%

\subsection{The Doeblin and Lyapunov conditions}\label{s-sec:Lyapu-density-semi-group}

We provide here the necessary conditions to apply Harris theorem on the $h-$transform. The first one is the creation of density, which is a direct consequence of Lemma~\ref{lem:densite-CW}. Recall
\[
\delta_mP^n_\tau(m') := P^n_{\tau}\po \mathds 1_{\{m'\}} \pf (m).
\]


\begin{lemma}\label{lem:densite-CW-killed}
Let $\omega>0$ and recall from Lemma~\ref{lem:densite-CW}  the definition
\begin{align*}
K_n = \left[m_+-\frac{\omega}{\sqrt{n}},m_++\frac{\omega}{\sqrt{n}}\right]\cap E^\varepsilon_n.
\end{align*}
For all $\tau> 0$, there exist $c>0$ and $n_0\in\mathbb{N}$ such that for all $n\geqslant n_0$, there exists a probability measure $\nu_n$ on $E^\varepsilon_n$ such that for any $m'\in E^\varepsilon_n$ 
\begin{equation*}
    \inf_{m\in K_n}\delta_mP^n_\tau(m')\geqslant c \nu_n(m').
\end{equation*}
\end{lemma}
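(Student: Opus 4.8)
The plan is to deduce Lemma~\ref{lem:densite-CW-killed} directly from Lemma~\ref{lem:densite-CW} by unwinding the definition of the $h$-transform and using the estimates on the eigen-elements from Lemmas~\ref{lem:estime-bn}~and~\ref{lem:estime-hn}. Recall that, by definition~\eqref{eq:def_h_transform}, for $m\in K_n$ and $m'\in E^\varepsilon_n$ we have
\[
\delta_mP^n_\tau(m') = e^{b_n\tau}h_n(m)^{-1}M^n_\tau\po h_n\mathds 1_{\{m'\}}\pf(m) = e^{b_n\tau}h_n(m)^{-1}h_n(m')\,\mathbb P_m\po m^n_\tau = m',\ \tau_n>\tau \pf.
\]
So the task reduces to producing a lower bound on each of the three factors: $e^{b_n\tau}\geqslant 1$ trivially since $b_n>0$; the factor $h_n(m)^{-1}$ is bounded below since $\|h_n\|_\infty = 1$, so $h_n(m)^{-1}\geqslant 1$; and for the last factor we apply Lemma~\ref{lem:densite-CW} with a sufficiently small $\omega$ to get $\mathbb P_m(m^n_\tau = m',\ \tau_n>\tau)\geqslant c_0\nu_n(\{m'\})$ uniformly over $m\in K_n$, where $\nu_n$ is the probability measure provided there (note $K_n$ here and in Lemma~\ref{lem:densite-CW} coincide). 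The only genuinely missing piece is a lower bound on $h_n(m')$ that does not collapse to $0$; this is where Lemma~\ref{lem:estime-hn} enters.

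The key steps, in order, would be: first, fix $\tau>0$ and apply Lemma~\ref{lem:densite-CW} with the same parameter $\omega$ to obtain the constant $c_0 = c_0(\tau,\omega)>0$ and the family of probability measures $(\nu_n)_{n}$ with $\nu_n(K_n)=1$; second, observe the identity above and reduce to lower bounding $h_n(m')\mathbb P_m(m^n_\tau=m',\tau_n>\tau)$; third, split the measure $\nu_n$ according to whether the target point $m'$ lies in a region where $h_n$ is bounded below. Since $\nu_n$ is supported on $K_n\subset[\delta,1]\cap E_n$ for $\delta = m_+/2$ (say) and all $n$ large enough, we have $m'\in[\delta,1]\cap E_n$ whenever $\nu_n(\{m'\})>0$, and Lemma~\ref{lem:estime-hn} gives $\inf_{[\delta,1]\cap E_n}h_n\to 1$; hence there is $n_0$ such that $h_n(m')\geqslant 1/2$ for all $n\geqslant n_0$ and all $m'$ in the support of $\nu_n$. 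Combining, for $n\geqslant n_0$ and all $m\in K_n$, $m'\in E^\varepsilon_n$,
\[
\delta_mP^n_\tau(m')\geqslant 1\cdot 1\cdot h_n(m')\,c_0\nu_n(\{m'\})\geqslant \frac{c_0}{2}\,\nu_n(\{m'\}),
\]
which is the claim with $c = c_0/2$ and the same $\nu_n$.

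I do not anticipate a serious obstacle here: the lemma is essentially a bookkeeping consequence of the already-established $h$-transform identity and the two ingredients (the Doeblin estimate of Lemma~\ref{lem:densite-CW} and the convergence $h_n\to 1$ on compacts of Lemma~\ref{lem:estime-hn}). The one point that needs a little care — and the reason the statement only claims existence of some $n_0$ rather than all $n$ — is precisely that $h_n$ could a priori be small near $\varepsilon$, so one must make sure the density is created on a set ($K_n$, a neighborhood of $m_+$) that stays uniformly away from the killing boundary $\varepsilon$; this is exactly why Lemma~\ref{lem:densite-CW} was phrased with $\nu_n(K_n)=1$ and why one invokes Lemma~\ref{lem:estime-hn} with a fixed $\delta\in]\varepsilon,m_+[$. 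A minor secondary check is that $e^{b_n\tau}\geqslant 1$ (true since $b_n>0$), so this factor only helps.
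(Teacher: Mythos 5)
Your proof is correct and follows essentially the same route as the paper: unwind the $h$-transform identity, drop the benign factors $e^{b_n\tau}\geqslant 1$ and $h_n(m)^{-1}\geqslant 1$, apply Lemma~\ref{lem:densite-CW} to the killed transition probability, and use Lemma~\ref{lem:estime-hn} to bound $h_n(m')$ from below on $K_n$ for $n$ large. The only (cosmetic) difference is that the paper reweights the Doeblin measure by $h_n$ into a new probability $\nu_n(m')=h_n(m')\tilde\nu_n(m')/\tilde\nu_n(h_n)$ and then lower-bounds $\tilde\nu_n(h_n)$, whereas you keep the original measure from Lemma~\ref{lem:densite-CW} and absorb the $h_n(m')\geqslant 1/2$ bound into the constant; both yield a valid minorization.
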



\begin{proof}[Proof of Lemma~\ref{lem:densite-CW-killed}]
Fix $\tau>0$. By definition, 
\begin{align*}
\delta_mP^n_\tau(m')=P^n_\tau\mathds{1}_{\{m'\}}(m)=e^{b_n\tau}h_n^{-1}(m)M^n_\tau(h_n\mathds{1}_{\{m'\}})(m).
\end{align*}
Lemma~\ref{lem:densite-CW} yields that there exists $c>0$ such that for all $n\in\mathbb{N}$ there exists $\tilde{\nu}_n$ such that for all $m,m'\in K_n$
\begin{equation*}
M^n_\tau(h_n\mathds{1}_{\{m'\}})(m) = h_n(m')\mathbb P_m\po m_\tau^n = m',\tau_n > \tau \pf \geqslant c \tilde \nu_n(m').
\end{equation*}
Therefore, using that $b_n\geqslant 0$ and $h_n\leqslant 1$
\begin{equation}\label{eq:int_densite_transform_0}
\delta_mP^n_\tau(m')\geqslant ce^{b_n\tau}h_n^{-1}(m)h_n(m')\tilde \nu_n(m')\geqslant ch_n(m')\tilde \nu_n(m') = c\tilde\nu_n(h_n) \nu_n(m'),
\end{equation}
where $\nu_{n}$ is a probability measure defined by
\begin{equation}\label{eq:int_densite_transform_1}
\nu_n(m'):=\frac{h_n(m')\tilde \nu_n(m')}{\tilde\nu_n(h_n)}.
\end{equation}
To conclude the proof, it only remains to show that $\liminf_n \nu_n(h_n) >0$. Fix $\varepsilon < \delta < m_+$. Lemma~\ref{lem:estime-hn} yields
\[
\inf_{m\in[\delta,1]}h_n(m)\rightarrow1.
\]
In particular, since $h_n(m)>0$ in $[\delta,1]$, there exists $\tilde{c}>0$ independent of $n$ such that $h_n\geqslant \tilde{c}\mathds{1}_{[\delta,1]}$. For $n$ large enough, $K_n\subset [\delta,1]$, and thus
\begin{equation}\label{eq:int_densite_transform_2}
\tilde \nu_n\left(h_n\right)\geqslant \tilde{c}\tilde \nu_n([\delta,1])\geqslant \tilde{c}\tilde \nu_n(K_n)=\tilde{c},
\end{equation}
which concludes the proof.
\end{proof}

The second condition is the existence of a Lyapunov function, which we define using the underlying potential $g$ (recall its definition from~\eqref{eq:pot-CW}).


\begin{lemma}\label{lem:lyapunov-CW-killed}
    For any $\tau>0$, there exist $b>0$, $0<a<1$, and $n_0\in\N$ (depending only on $\tau$, $\beta$ and $\varepsilon$) such that for all $n\geqslant n_0$ and $m\in E^\varepsilon_n$
    \begin{equation*}
    P^n_\tau V_n(m) \leqslant aV_n(m) + \frac{b}{n},
    \end{equation*}
    with $V_n:=h_n^{-1}g:E^\varepsilon_n\mapsto \mathbb{R}_+$.
\end{lemma}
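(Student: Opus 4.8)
The plan is to exploit the fact that $g$ itself is an "almost Lyapunov function" for the non-conditioned process $m^n$, and then transfer this to the $h$-transform $P^n$ using the eigen-estimates $h_n\to 1$ and $\sqrt n\,b_n$ bounded from Lemmas 3.5 and 3.6. First I would record the action of the generator $\mathcal A_n$ on $g$: since $\mathcal A g(m) = -(g'(m))^2 \leqslant 0$, and the discrete correction is $\mathcal A_n g(m) = \mathcal A g(m) + O(1/n)$ uniformly on $[-1,1]$ (the second-order term is $\tfrac{2}{n^2}\bigl(\lambda^n_+(m) + \lambda^n_-(m)\bigr)g'' + \dots = O(1/n)$ because $\lambda^n_\pm = O(n)$ and $g$ is smooth), I get $\mathcal A_n g(m) \leqslant -(g'(m))^2 + C/n$ for some $C$ independent of $n$. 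On the region where $g$ is bounded away from its minimum, say $g(m)\geqslant\delta$, the convexity-type structure of $g$ away from $m_+$ (it has a single critical point in $(\varepsilon,1]$, namely $m_+$, with $g''(m_+)>0$ by Lemma 3.4) gives $(g'(m))^2 \geqslant \kappa\, g(m)$ for some $\kappa>0$ on $E^\varepsilon_n$; this is where one must be slightly careful near the death boundary $\varepsilon$, but since $g'(\varepsilon)<0$ strictly and $g$ has no critical point in $[\varepsilon,m_+)$, the bound $(g')^2\gtrsim g$ holds on all of $[\varepsilon,1]$. Hence $\mathcal A_n g \leqslant -\kappa g + C/n$, which by Grönwall / Dynkin gives $M^n_\tau g(m) \leqslant \mathbb E_m\bigl(g(m^n_\tau)\mathds 1_{\tau_n>\tau}\bigr) \leqslant e^{-\kappa\tau} g(m) + \tfrac{C}{\kappa n}$ (dropping the indicator only helps, since $g\geqslant 0$).

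Next I would convert this bound on $M^n$ into a bound on $P^n_\tau V_n$ with $V_n = h_n^{-1}g$. By definition $P^n_\tau V_n(m) = e^{b_n\tau} h_n^{-1}(m)\, M^n_\tau(h_n V_n)(m) = e^{b_n\tau} h_n^{-1}(m)\, M^n_\tau g(m)$, since $h_n V_n = g$. Therefore
\begin{equation*}
P^n_\tau V_n(m) = e^{b_n\tau} h_n^{-1}(m)\, M^n_\tau g(m) \leqslant e^{b_n\tau} h_n^{-1}(m)\Bigl( e^{-\kappa\tau} g(m) + \frac{C}{\kappa n}\Bigr).
\end{equation*}
Now I use $b_n \leqslant C'/\sqrt n \to 0$, so $e^{b_n\tau} \leqslant 1 + o(1) \leqslant 2$ for $n\geqslant n_0(\tau)$, and more precisely $e^{b_n\tau}e^{-\kappa\tau}\leqslant e^{-\kappa\tau/2} < 1$ for $n$ large. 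The first term becomes $e^{b_n\tau}e^{-\kappa\tau} h_n^{-1}(m) g(m) = e^{b_n\tau}e^{-\kappa\tau} V_n(m) \leqslant a V_n(m)$ with $a := e^{-\kappa\tau/2}\in(0,1)$, for $n\geqslant n_0$. For the second term I need $h_n^{-1}(m)$ bounded uniformly, which is the delicate point: $h_n$ is only known to converge to $1$ on $[\delta,1]$ for $\delta>\varepsilon$ (Lemma 3.7), not near $\varepsilon$ where $h_n(m)$ may degenerate. However, near $\varepsilon$ one has $g(m)$ bounded (by $\|g\|_\infty$), so $V_n(m) = h_n^{-1}(m)g(m)$ could be large there — but that is fine, because on $[\varepsilon,\delta]$ one can instead argue directly that $\mathcal A_n g \leqslant -\kappa g$ already forces contraction, and the $h_n^{-1}$ appearing multiplies only the $C/n$ term, which I can bound using $h_n(m) = e^{b_n t}\mathbb E_m(h_n(m^n_t)\mathds 1_{\tau_n>t})$ together with the lower bound $\mathbb P_m(\tau_n>t)\geqslant C_1^n(1-C_2/\sqrt n)^{ct}$ from Lemma 3.6, giving $\inf_{E^\varepsilon_n} h_n \geqslant c_n$ with $c_n$ possibly shrinking but with $n^{-1}c_n^{-1}$ still $o(1)$; alternatively, and more cleanly, absorb the region $\{g(m)\leqslant R/n\}$-type estimate. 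The cleanest route: write $h_n^{-1}(m)\tfrac{C}{\kappa n} \leqslant \tfrac{C}{\kappa}\cdot\tfrac{1}{n\inf_{E^\varepsilon_n}h_n}$ and show the denominator $n\inf h_n \to \infty$.

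\textbf{Main obstacle.} The genuinely delicate step is controlling $h_n^{-1}$ on all of $E^\varepsilon_n$, including near the killing boundary $\varepsilon$, since Lemma 3.7 only gives $h_n\to 1$ on $[\delta,1]$. I expect the resolution to be that near $\varepsilon$ the drift $-g'$ points strongly into the domain (as $g'(\varepsilon)<0$), so $h_n$ does not actually degenerate badly there and in fact $\inf_{E^\varepsilon_n} h_n$ is bounded below by a constant times some negative power of $n$ that is killed by the $1/n$ prefactor — or, by refining the argument of Lemma 3.7, that $\inf_{E^\varepsilon_n} h_n$ stays bounded away from $0$ uniformly. Once that is in hand, the inequality $P^n_\tau V_n \leqslant aV_n + b/n$ follows with $a = e^{-\kappa\tau/2}$ and $b$ depending only on $\kappa$, $\tau$, $\beta$, $\varepsilon$. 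A secondary technical point is justifying the Dynkin/Grönwall passage from the infinitesimal inequality $\mathcal A_n g \leqslant -\kappa g + C/n$ to the finite-time inequality $M^n_\tau g \leqslant e^{-\kappa\tau}g + C/(\kappa n)$ rigorously on the killed semigroup, which is routine since $g$ is bounded and $E^\varepsilon_n$ is finite.
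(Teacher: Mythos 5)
Your overall strategy coincides with the paper's up to the point you flag as delicate, and it is precisely at that point that the proof has a genuine gap. You correctly establish $\mathcal A g = -(g')^2 \leqslant -\gamma g$ on $[\varepsilon,1]$ (using Lemma~\ref{lem:mini-non-degenere}), bound $\mathcal A_n g \leqslant \mathcal A g + C/n$, integrate to $M^n_\tau g(m)\leqslant e^{-\gamma\tau}g(m)+C/(\gamma n)$, and handle the factor $e^{b_n\tau}$ via Lemma~\ref{lem:estime-bn}. What you are then left with is the additive term $e^{b_n\tau}h_n^{-1}(m)\,C/(\gamma n)$, and you rightly observe that $h_n^{-1}(m)$ need not be uniformly bounded as $m$ approaches $\varepsilon$: Lemma~\ref{lem:estime-hn} only gives $h_n\to 1$ on $[\delta,1]$ for $\delta>\varepsilon$, and the paper itself later remarks (in the proof of Theorem~\ref{thm:long-time-behavior}) that $h_n(\varepsilon_n)$ may tend to $0$. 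Your proposed fallbacks --- that $\inf_{E^\varepsilon_n}h_n$ stays uniformly bounded below, or that $n\inf h_n\to\infty$ --- are not established in the paper and are not obviously true, so this is a real hole, not a technicality one can wave away.

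The paper's fix is a localization trick you do not supply. Fix $m_0\in(\varepsilon,m_+)$. For $m\in[\varepsilon,m_0]$ one has $g(m)\geqslant g(m_0)>0$ (since $g'\leqslant 0$ on $[\varepsilon,m_+]$), so once $n$ is large enough that
\begin{equation*}
\frac{C}{\gamma n}\leqslant\bigl(e^{-\gamma\tau/2}-e^{-\gamma\tau}\bigr)g(m_0),
\end{equation*}
the additive $C/(\gamma n)$ is absorbed into the contraction on $[\varepsilon,m_0]$, i.e.\ $e^{-\gamma\tau}g(m)+C/(\gamma n)\leqslant e^{-\gamma\tau/2}g(m)$ there. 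This produces
\begin{equation*}
M^n_\tau g(m)\leqslant e^{-\gamma\tau/2}g(m)+\frac{C}{\gamma n}\,\mathds{1}_{m\in[m_0,1]},
\end{equation*}
and now the additive term is multiplied only by $h_n^{-1}(m)$ for $m\in[m_0,1]$, where Lemma~\ref{lem:estime-hn} \emph{does} give $\inf_{[m_0,1]\cap E_n}h_n\to 1$. Pushing through the Doob transform exactly as you do then gives $P^n_\tau V_n\leqslant aV_n+b/n$ with $a=e^{-\gamma\tau/4}$ for $n\geqslant n_1$, with no need to control $h_n$ near $\varepsilon$ at all. In short: do not try to bound $h_n^{-1}$ globally; instead absorb the $O(1/n)$ term into the contraction on the region where $g$ is bounded away from zero, which is precisely the region where $h_n^{-1}$ may misbehave.
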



\begin{proof}[Proof of Lemma~\ref{lem:lyapunov-CW-killed}]
We have that $\A g = -(g')^2$. Both the drift $g'$ and the potential $g$ have a unique zero in $[\varepsilon,1]$, which is $m_+$, and from Lemma~\ref{lem:mini-non-degenere}, $g''(m_+)> 0$. 
Therefore
\begin{align*}
\frac{(g'(m))^2}{g(m)}\xrightarrow[m\rightarrow m_+]{}2g''(m_+)>0,
\end{align*}
and thus, by continuity on $[\varepsilon,1]$, there thus exists $\gamma>0$ such that for all $m\in[\varepsilon,1]$
\begin{align*}
\frac{(g'(m))^2}{g(m)}\geqslant \gamma.
\end{align*}
This yields 
\begin{align*}
\A g(m) \leqslant -\gamma g(m).
\end{align*}
Now, for all $\C^2$ function $f$, we have from a Taylor expansion
\[
\|\A f-\A_n f\|_{\infty} \leqslant \frac{4e^{\beta}\|f''\|_{\infty}}{n},
\]
and thus for $m\in E_n^\varepsilon$ and since $g\geqslant0$, 
\[
\Lambda^ng(m) \leqslant  \A_n g(m) \leqslant \A g(m) + \frac{4e^{\beta}\|g''\|_{\infty}}{n} \leqslant -\gamma g(m) + \frac{4e^{\beta}\|g''\|_{\infty}}{n},
\]
where recall that $\Lambda^n$ is the generator of the semi-group $M^n$.
From Kolmogorov equation we then obtain
\begin{equation}\label{eq:lya_non_cons}
    M_{\tau}^n g (m) \leqslant e^{-\gamma\tau}g(m) + \frac{4e^{\beta}\|g''\|_{\infty}}{\gamma n} .
\end{equation}
Consider now $\varepsilon<m_0<m_+$ and, given $\tau>0$, consider $n_0\in\n$ such that for $n\geqslant n_0$ we have
\begin{align*}
\frac{4e^{\beta}\|g''\|_{\infty}}{\gamma n}\leqslant (e^{-\frac{\gamma }{2} \tau}-e^{-\gamma  \tau})g(m_0).
\end{align*}
Since for $\varepsilon<m<m_0$ we have $g(m)\geqslant g(m_0)$ (as, recall, for $\beta>1$ and $m\in[\varepsilon,m_+]$ we have $g'(m)\leqslant0$), plugging the above estimate back into \eqref{eq:lya_non_cons}, we get that for $n\geqslant n_0$
\begin{align*}
M_{\tau}^n g  (m)\leqslant e^{-\frac{\gamma \tau}{2}}g(m) + \frac{4e^{\beta}\|g''\|_{\infty}}{\gamma n}\mathds{1}_{m\in [m_0,1]} .
\end{align*}
Going back to the Doob-transform yields
\begin{equation*}
    P^n_\tau (h_n^{-1}g)(m) \leqslant e^{-\frac{\gamma \tau}{2}} e^{b_n\tau}  (h_n^{-1}g)(m) + e^{b_n\tau}h_n^{-1}(m)\frac{4e^{\beta}\|g''\|_{\infty}}{\gamma n}\mathds{1}_{m\in [m_0,1]}.
\end{equation*}
Lemmas~\ref{lem:estime-bn}~and~\ref{lem:estime-hn} yield $b_n\xrightarrow[n\rightarrow \infty]{}0$ and $\inf_{m\in[m_0,1]\cap E_n} h_n(m)\xrightarrow[n\rightarrow \infty]{} 1$ respectively, so that there exists $n_1\geqslant n_0$ such that for $n\geqslant n_1$
\begin{equation*}
    P^n_\tau (h_n^{-1}g)(m) \leqslant e^{-\frac{\gamma \tau}{4}} (h_n^{-1}g)(m) + \frac{8e^{\beta}\|g''\|_{\infty}}{\gamma n}=: a(h_n^{-1}g)(m) + \frac{b}{n}.
\end{equation*}
We now consider $V_n=h_n^{-1}g$ which, since we chose $\varepsilon\in\mathbb{R}\setminus\mathbb{Q}$ to have $h_n>0$ on $E_n\cap[\varepsilon,1]$, is well defined and satisfies for all $m\in[\varepsilon, 1]$, $V_n(m)\geqslant 0$ with equality if and only if $m=m_+$. This yields the result.
\end{proof}

%
%
%
%

\subsection{Contraction of the associated semi-group}\label{s-sec:proof-theorems}

We may now prove Theorem~\ref{thm:long-time-behavior} using Lemmas~\ref{lem:densite-CW-killed}~and~\ref{lem:lyapunov-CW-killed}. Define for a given $\xi>0$ and a nonnegative function $V$ on a space $E$
\begin{align*}
\|f\|_{L^\infty(\xi,V,E)}=\sup_{x\in E}\frac{|f(x)|}{1+ \xi V(x)},
\end{align*}
as well as the weighted total variation distance
\begin{equation}\label{eq:def_TV_weighted}
d^{TV}_{\xi,V,E}(\mu,\nu)=\sup_{f:\|f\|_{L^\infty(\xi,V,E)}\leqslant1}\int_E f(x)(\mu-\nu)(dx).
\end{equation}
Both notations are quite heavy, as we choose to insist on the parameters $\xi$, $V$ and $E$. We do so since, in the way we use them, they depend on $n$ and tracking this dependence on the number of particles is of major importance in this work. Note also that we have for all $\mu,\nu\in\mathcal{P}(E)$
\begin{align*}
    d_{TV}(\mu,\nu)\leqslant d^{TV}_{\xi,V,E}(\mu,\nu)\leqslant 2+ \xi \left(\mu(V)+\nu(V)\right).
\end{align*}
We use the following result (adapted and written in the finite case for simplicity) from \cite{HM11}.


\begin{proposition}[Theorems 1.2 and 1.3 from \cite{HM11}]\label{prop:yathhm}
    Let $\mathcal{P}$ be a Markov transition kernel on a finite space $\mathds{X}$. Assume
    \begin{itemize}
        \item There exists a function $V:\mathds{X}\mapsto [0,\infty[$ and constants $K\geqslant0$ and $\gamma\in]0,1[$ such that for all $x\in \mathds{X}$
        \begin{align*}
        \mathcal{P}V(x)\leqslant \gamma V(x)+K.
        \end{align*}
        \item There exists a constant $\alpha\in]0,1[$ and a probability measure $\nu$ such that for all $y\in \mathds{X}$
        \begin{align*}
        \inf_{x\in\mathcal{C}} \delta_x\mathcal{P}(y)\geqslant \alpha \nu(y),
        \end{align*}
        with $\mathcal{C}=\left\{x\in\mathds{X}\ :\ V(x)\leqslant R\right\}$ for some $R>\frac{2K}{1-\gamma}$.
    \end{itemize}
    Then $\mathcal{P}$ admits a unique invariant measure $\mu^*$. Furthermore, denoting
    \begin{align*}
    \alpha_0\in]0,\alpha[,\qquad \gamma_0\in\left]\gamma+\frac{2K}{R},1\right[,\qquad \xi=\frac{\alpha_0}{K},\qquad \bar{\alpha}=\max\left(1-(\alpha-\alpha_0),\frac{2+R\xi\gamma_0}{2+R\xi}\right),
    \end{align*}
    we have for any probability measure $\mu,\nu$ on $\mathds{X}$
    \begin{align*}
    d^{TV}_{\xi,V,\mathds{X}}(\mu \mathcal{P},\nu \mathcal{P})\leqslant \bar{\alpha}d^{TV}_{\xi,V,\mathds{X}}(\mu,\nu).
    \end{align*}
\end{proposition}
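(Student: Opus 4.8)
The plan is to run the Hairer--Mattingly coupling argument, which in the finite setting is completely elementary. The first step is to recognise $d^{TV}_{\xi,V,\mathds X}$ as a Kantorovich--Rubinstein dual. On the finite space $\mathds X$, set
\[
d(x,y) = \1_{\{x\neq y\}}\po 2 + \xi V(x) + \xi V(y)\pf .
\]
One checks that $d$ is a genuine distance (the triangle inequality is automatic from the ``$2+\cdots$'' shape) and that a function $f$ is $1$-Lipschitz for $d$ precisely when $f$ minus a suitable constant satisfies $\|\cdot\|_{L^\infty(\xi,V,\mathds X)}\leqslant 1$; since constants do not affect $\mu-\nu$, this gives $d^{TV}_{\xi,V,\mathds X}(\mu,\nu)=\mathcal W_d(\mu,\nu):=\inf_{\pi\in\Pi(\mu,\nu)}\int d\,\dd\pi$. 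Because optimal couplings can be glued, the announced contraction reduces to the one-step pointwise estimate
\[
\mathcal W_d\po\delta_x\mathcal P,\delta_y\mathcal P\pf \;\leqslant\; \bar\alpha\, d(x,y),\qquad \forall\,x,y\in\mathds X .
\]

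To prove this I would distinguish two regimes according to the size of $s:=V(x)+V(y)$ (one may assume $x\neq y$). If $s\geqslant R$, so that at least one of $x,y$ lies outside $\mathcal C$, use only the drift: under the \emph{independent} coupling of $\delta_x\mathcal P$ and $\delta_y\mathcal P$,
\[
\mathcal W_d\po\delta_x\mathcal P,\delta_y\mathcal P\pf \leqslant 2 + \xi\po \mathcal PV(x)+\mathcal PV(y)\pf \leqslant 2 + \xi\po\gamma s + 2K\pf ,
\]
and since $\xi K=\alpha_0<1$ the map $s\mapsto(2+\xi\gamma s+2\xi K)/(2+\xi s)$ is decreasing, so $\gamma_0>\gamma+2K/R$ bounds the ratio on $[R,\infty)$ by its value at $R$, namely $(2+R\xi\gamma_0)/(2+R\xi)$. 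If $s<R$, so both $x,y\in\mathcal C$, use the minorization: it forces $\delta_x\mathcal P$ and $\delta_y\mathcal P$ to be couplable so as to coincide with probability at least $\alpha$, along which, keeping the marginals correct,
\[
\mathcal W_d\po\delta_x\mathcal P,\delta_y\mathcal P\pf \leqslant 2(1-\alpha) + \xi\po \mathcal PV(x)+\mathcal PV(y)\pf \leqslant 2(1-\alpha) + \xi\po\gamma s+2K\pf .
\]
Dividing by $d(x,y)=2+\xi s$ gives a monotone function of $s\in[0,R]$, hence maximised at an endpoint: at $s=0$ it equals exactly $1-\alpha+\xi K=1-(\alpha-\alpha_0)$ (this is precisely why one sets $\xi=\alpha_0/K$), and at $s=R$ it is, again by $\gamma_0>\gamma+2K/R$, at most $(2+R\xi\gamma_0)/(2+R\xi)$. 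In all cases the ratio is $\leqslant\bar\alpha$.

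Combining the two regimes yields the one-step contraction, hence $d^{TV}_{\xi,V,\mathds X}(\mu\mathcal P,\nu\mathcal P)\leqslant\bar\alpha\, d^{TV}_{\xi,V,\mathds X}(\mu,\nu)$; since $\mathcal P(\mathds X)$ is compact, hence complete for $d^{TV}_{\xi,V,\mathds X}$, Banach's fixed point theorem provides the unique invariant probability measure $\mu^*$, and the quantitative convergence follows by iteration. The only delicate point of the whole argument is the bookkeeping in the two ``boundary'' computations ($s=0$ with the minorization, $s=R$ with the drift), which is exactly what dictates the choices $\xi=\alpha_0/K$ and $\gamma_0\in\,]\gamma+2K/R,1[$; everything else — the duality identification, the gluing of couplings, the fixed point — is routine on the finite state space considered here. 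Alternatively, one may invoke Theorems~1.2 and~1.3 of \cite{HM11} verbatim, the restriction to a finite $\mathds X$ merely trivialising the measurability and compactness considerations.
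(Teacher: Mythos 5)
Your proposal is mathematically correct and is exactly the Hairer--Mattingly argument. Note, however, that the paper does not actually prove this proposition: it is quoted verbatim (in a finite-state specialization) from Theorems~1.2 and~1.3 of \cite{HM11}, with no proof supplied. So there is no in-paper argument to compare against; what you have done is reconstruct the cited reference's proof, which is a valid and useful check.

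Two small remarks on the write-up. First, the parenthetical ``so that at least one of $x,y$ lies outside $\mathcal C$'' in the case $s:=V(x)+V(y)\geqslant R$ is not strictly true: one can have $V(x)=V(y)=R/2$ with both points in $\mathcal C$. This does not affect the argument, since in that regime you only use the Lyapunov drift (valid everywhere) and never invoke the minorization; it would be cleaner simply to drop that gloss. Second, your duality identification is correct but worth spelling out a touch more: a function $f$ is $1$-Lipschitz for $d(x,y)=\1_{\{x\neq y\}}(2+\xi V(x)+\xi V(y))$ if and only if for all $x,y$ one has $f(x)-1-\xi V(x)\leqslant f(y)+1+\xi V(y)$, which is precisely the solvability condition for the existence of a constant $c$ with $\|f-c\|_{L^\infty(\xi,V,\mathds X)}\leqslant 1$; since adding a constant does not change $\int f\,\dd(\mu-\nu)$ for probability measures, the two dual formulations agree. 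With those cosmetic points addressed, the two-regime computation (monotone ratio on $[R,\infty)$ dominated by its value at $R$; monotone ratio on $[0,R]$ dominated by an endpoint, the $s=0$ endpoint giving $1-(\alpha-\alpha_0)$ and the $s=R$ endpoint giving $(2+R\xi\gamma_0)/(2+R\xi)$) is exactly right, as is the gluing of couplings and the Banach fixed-point conclusion on the finite, hence complete, space.
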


We may now prove Theorem~\ref{thm:long-time-behavior}.


\begin{proof}[Proof of Theorem~\ref{thm:long-time-behavior}]
Fix $\tau>0$.

\textbf{$\bullet$ Applying Proposition~\ref{prop:yathhm}.} 
Let us see that $P^n_\tau$ satisfies the assumptions of Proposition~\ref{prop:yathhm}. The first assumption is obtained via $V_n$ defined in Lemma~\ref{lem:lyapunov-CW-killed}, with $\gamma=a$ and $K=\frac{b}{n}$. For the second assumption, we set $R=\frac{3b}{(1-a)n}>\frac{2K}{1-\gamma}$, and notice that 
\begin{align*}
    h_n^{-1}(m)g(m)\leqslant \frac{3b}{(1-a)n} \quad \implies\quad g(m)\leqslant \frac{3b}{(1-a)n}
    \implies\quad (m-m_+)^2\leqslant \frac{C}{n},
\end{align*}
for some constant $C>0$ depending only on $a$, $b$ and the function $g$. We may thus use Lemma~\ref{lem:densite-CW-killed}, and choose $\omega>0$ large enough so that
\begin{align*}
    V_n(m)\leqslant R\quad \implies\quad  m\in K_n.
\end{align*}
Hence, we satisfy the second assumption of Proposition~\ref{prop:yathhm} with $\alpha=c$ and $\nu$ given in Lemma~\ref{lem:densite-CW-killed}. We therefore obtain the existence of a unique invariant measure $\tilde{\nu}_\infty^n$ on $E_n^{\varepsilon}$ for $P^n_\tau$. We now consider 
\begin{align*}
    \alpha_0=\frac{c}{2},\quad \gamma_0=\frac{3+a}{4},\quad \xi_n=\frac{cn}{2b},\quad \bar{\alpha}=\max\left(1-\frac{c}{2}, \frac{16(1-a)+3c(3+a)}{16(1-a)+12c}\right)<1,
\end{align*}
where we write $\xi_n$ in order to insist on the dependence on $n$ (whereas the other parameters $\alpha_0$, $\gamma_0$ and $\bar{\alpha}$ are independent of $n$). From now on, for the sake of conciseness, write 
\[
d_n:=d^{TV}_{\xi_n,V_n, E^\varepsilon_n}.
\]
We obtain that for any probability measures $\mu$ and $\nu$ on $E_n^{\varepsilon}$
\begin{align*}
    d_n(\mu P^n_\tau ,\nu P^n_\tau )\leqslant \bar{\alpha} d_n( \mu, \nu).
\end{align*}

\textbf{$\bullet$ Contraction for all $t>0$.} Fix now $\eta\in]\varepsilon,m_+[$. 
For any $t\geqslant 0$ and any initial point $m^n_0\in[\eta,1]\cap E^\varepsilon_n$, we have
\begin{align*}
    d_n(\delta_{m^n_0} P^n_t ,\tilde{\nu}^n_\infty )\leqslant& d_n\left(\left(\delta_{m^n_0} P^n_{t-\left \lfloor \frac{t}{\tau}\right\rfloor \tau}\right) P^n_{\left \lfloor \frac{t}{\tau}\right\rfloor \tau}, \delta_{m^n_0} P^n_{\left \lfloor \frac{t}{\tau}\right\rfloor \tau}  \right)+d_n\left(\delta_{m^n_0} P^n_{\left \lfloor \frac{t}{\tau}\right\rfloor \tau} ,\tilde{\nu}^n_\infty \right)\\
    \leqslant& \bar{\alpha}^{\left \lfloor \frac{t}{\tau}\right\rfloor } d_n\left( \delta_{m^n_0} P^n_{t-\left \lfloor \frac{t}{\tau}\right\rfloor \tau}, \delta_{m^n_0}\right)+\bar{\alpha}^{\left \lfloor \frac{t}{\tau}\right\rfloor } d_n\left(\delta_{m^n_0},\tilde{\nu}^n_\infty \right).
\end{align*}
At this stage, one could try and bound 
\begin{align*}
d_n(\delta_{m^n_0} P^n_t ,\tilde{\nu}^n_\infty )\leqslant& \frac{4(1+\xi_n \|V_n\|_{\infty})}{\bar{\alpha}}e^{\frac{\ln(\bar{\alpha})}{\tau} t},
\end{align*}
but the main issue would then be the term $\|V_n\|_{\infty}$, since $V_n=h_n^{-1}g$ and, denoting $\varepsilon_n$ the left-most point in $E^\varepsilon_n$, we might have $h_n(\varepsilon_n)\xrightarrow[n\rightarrow \infty]{} 0$. We therefore need to be more careful. Note that we have
\begin{align*}
d_n\left(\delta_{m^n_0},\tilde{\nu}^n_\infty \right)\leqslant&2+ \xi_n \left(V_n(m^n_0)+\tilde{\nu}^n_\infty(V_n)\right).
\end{align*}
We now bound $V_n(m^n_0)$, $\tilde{\nu}^n_\infty(V_n)$ and $\delta_{m^n_0}P^n_{t-\left \lfloor \frac{t}{\tau}\right\rfloor \tau}(V_n)$ uniformly in $n$ and $t$.
Using Lemma~\ref{lem:estime-hn} and the fact that $m^n_0\geqslant \eta>\varepsilon$ for all $n\in\n$, we have 
\begin{align*}
\limsup_n V_n(m^n_0)\leqslant \sup_{[\eta,1]}g\leqslant \|g\|_\infty.
\end{align*}
In particular, $V_n(m^n_0)$ is bounded uniformly in $n$. Then, we use Lemma~\ref{lem:lyapunov-CW-killed} to get that, for any $k\in\mathbb{N}\setminus\{0\}$
\begin{align*}
P^n_{k\tau}(V_n)(m^n_0)\leqslant  P^n_{(k-1)\tau}\left(aV_n(m^n_0)+\frac{b}{n}\right)\leqslant a^k V_n(m^n_0)+\frac{b}{n}\frac{1-a^k}{1-a}\leqslant V_n(m^n_0) +\frac{b}{(1-a)n}.
\end{align*}
In particular, again since $V_n(m^n_0)$ is bounded uniformly in $n$, we obtain that $P^n_{k\tau}(V_n)(m^n_0)$ is upper-bounded uniformly in $k$ and $n$, say by a constant $C_\infty$. Since, $\|\xi_nV_n\|_{L^\infty(\xi_n, V_n, E^\varepsilon_n)}\leqslant 1$, we obtain
\begin{align*}
\xi_n\left(\tilde{\nu}^n_\infty(V_n)-C_\infty\right)\leqslant \xi_n\left|P^n_{k\tau}(V_n)(m^n_0)-\tilde{\nu}^n_\infty(V_n)\right|\leqslant d_n\left(\delta_{m^n_0}P^n_{k\tau}, \tilde{\nu}^n_\infty\right)\xrightarrow[k\rightarrow\infty]{}0.
\end{align*}
In particular, there exists a constant, also denoted $C_\infty$, such that for all $n\in\n$
\begin{equation}\label{eq:borne_unif_V_infty}
   \tilde{\nu}^n_\infty(V_n) \leqslant C_\infty.
\end{equation}
Note that, again using Lemma~\ref{lem:lyapunov-CW-killed} (noticing that the parameter $b$ is independent of $\tau$), we can also bound $\delta_{m^n_0}P^n_{s}(V_n)$ uniformly in $n$ and $s\in[0,1]$. In particular, we now obtain that there exist some constants $C,c>0$ (depending only on $\varepsilon$, $\eta$, $\tau$ and $g$) such that for all $n\in\n$, all $t\geqslant 0$ and all $m^n_0\in[\eta,1]$
\begin{equation*}
   d_n\left(\delta_{m^n_0}P^n_t,\tilde{\nu}^n_\infty \right)\leqslant Cne^{-ct}.
\end{equation*}
Thus, for any function $f:E^\varepsilon_n\to\R$
\begin{equation*}
\left| P_t^nf(m^n_0) - \tilde{\nu}^n_\infty(f) \right| \leqslant Cne^{-c t} \left\|\frac{f}{1+\xi_nV_n}\right\|_{\infty}.
\end{equation*}

\textbf{$\bullet$ Conclusion.} We now wish to obtain a similar result on the non-conservative semi-group $M^n$. Changing $f$ into $h_n^{-1}f$ yields for all $m\in E^\varepsilon_n\cap[\eta,1]$
\begin{equation*}
\left| e^{b_n t}M_t^n(f)(m) - h_n(m)\tilde{\nu}^n_{\infty}(h_n^{-1}f) \right| \leqslant Cne^{-ct}h_n(m)\left\|\frac{h_n^{-1}f}{1+\xi_nV_n}\right\|_{\infty}. 
\end{equation*}
Define, for all $m\in E^\varepsilon_n$, the probability measure
\begin{equation*}
\nu^n_{\infty}(m)=\frac{h_n^{-1}(m)\tilde{\nu}^n_{\infty}(m)}{\tilde{\nu}^n_{\infty}(h_n^{-1})}.
\end{equation*}
This measure is such that for any function $f:E^\varepsilon_n\to\R$ we have
\begin{align*}
\nu^n_{\infty}(f)=\sum_{m\in E^\varepsilon_n}f(m)\nu^n_{\infty}(m)=\sum_{m\in E^\varepsilon_n}\frac{f(m)h_n^{-1}(m)\tilde{\nu}^n_{\infty}(m)}{\tilde{\nu}^n_{\infty}(h_n^{-1})}=\frac{\tilde{\nu}^n_{\infty}(fh_n^{-1})}{\tilde{\nu}^n_{\infty}(h_n^{-1})},
\end{align*}
which yields in particular $\nu^n_{\infty}(h_n)\tilde{\nu}^n_{\infty}(h_n^{-1})=1$.
This implies that
\begin{equation}\label{eq:long-time-semi-group}
\left| M_t^n(f)(m) - e^{-b_n t}h_n(m)\frac{\nu^n_{\infty}(f)}{\nu^n_{\infty}(h_n)} \right| \leqslant Cne^{-ct}h_n(m)e^{-b_n t}\left\|\frac{h_n^{-1}f}{1+\xi_nV_n}\right\|_{\infty}. 
\end{equation}
Notice that this inequality is more precise than \eqref{eq:conv-non-explicite}. Now, write
\begin{align*}
M^n_t(\mathds{1})(m)=\|h_n\|_\infty M^n_t(\mathds{1})(m)\geqslant M^n_th_n(m)=e^{-b_nt}h_n(m),
\end{align*}
and thus for all bounded $f:[0,1]\to \R$
\begin{align}
\left|\mathbb{E}_m\left(f(m_t^n)\Big|\tau_n>t\right)-\nu^n_\infty(f)\right| =& \left| \frac{M^n_t(f)(m)}{M^n_t(\mathds{1})(m)} -  \nu^n_\infty(f)\right|\nonumber \\ 
=& \frac{1}{M^n_t(\mathds{1})(m)}\left| M^n_t(f)(m) - M^n_t(\mathds{1})(m)\nu^n_\infty(f) \right|\nonumber \\ 
\leqslant& e^{b_n t}h_n^{-1}(m) \left| M^n_t(f)(m) - e^{-b_n t}h_n(m)\frac{\nu^n_\infty(f)}{\nu^n_{\infty}(h_n)} \right|\nonumber \\
&+e^{b_n t}h_n^{-1}(m) \nu^n_\infty(f)\left| h_n(m)e^{-b_n t}\frac{1}{\nu^n_{\infty}(h_n)} - M^n_t(\mathds{1})(m) \right|\nonumber\\ 
 \leqslant& Cne^{-c t}\left\|\frac{h_n^{-1}f}{1+\xi_nh_n^{-1}g}\right\|_{\infty} +Cne^{-c t} \|f\|_{\infty} \left\|\frac{h_n^{-1}}{1+\xi_nh_n^{-1}g}\right\|_{\infty}\label{eq:unif_en_n_pour_f_particulier},
\end{align}
where this last inequality was obtained thanks to \eqref{eq:long-time-semi-group}. Note that, again since $h_n$ converges to $1$ uniformly in all compact sets included in $]\varepsilon,m_+[$ and since $g$ is explicit and only reaches $0$ at $m_+$, there exists a constant $C>0$, independent of $n$, such that
\begin{align*}
\left\|\frac{h_n^{-1}}{1+\xi_nh_n^{-1}g}\right\|_{\infty}=\left\|\frac{1}{h_n+\xi_ng}\right\|_{\infty} \leqslant C.
\end{align*}
We thus obtain that there exists $C, c>0$ such that for all $f$ satisfying $\|f\|_{\infty}\leqslant 1$, and for all $m\in[\eta,1]\cap E^\varepsilon_n$, we have
\begin{align*}
\left|\mathbb{E}_m\left(f(m_t^n)\Big|\tau_n>t\right)-\nu^n_\infty(f)\right| \leqslant Cne^{-ct},
\end{align*}
which is the desired result. 
\end{proof}

%
%
%
%

\section{Propagation of Chaos}\label{sec:PoC}

The goal of this section is to prove our main theorem, Theorem~\ref{thm:final_result}, building upon Theorem~\ref{thm:long-time-behavior}. To this end, we need two intermediate propagation of chaos results. Recall that $\mathcal{L}(m^n_t)$ denotes the law of $m^n_t$, and  $\mathcal{L}(\bar{m}_t)$ the law of $\overline{m}_t$. Then, consider the two following estimates

\begin{enumerate}
\item \textbf{Short time control.} There exist $c_1, C_1>0$ such that for any $n\in\mathbb{N}$, any Lipschitz continuous function $f:[0,1]\to\R$, and any $t\geqslant 0$
\begin{align}\label{eq:non-unif-poc}
&\left|\E_{\mathcal{L}(m^n_0)}\left(f(m_t^n)|\tau_n>t\right)-\E_{\mathcal{L}(\bar{m}_0)} f(\overline{m}_t)\right|\nonumber\\
&\qquad\qquad\leqslant C_1\frac{\|f\|_{\textrm{lip}} }{\mathbb{E}_{\mathcal{L}(m^n_0)}\left(\mathds{1}_{\tau_n>t}\right)}\left( e^{c_1t}\mathcal{W}_2(\mathcal{L}(m^n_0),\mathcal{L}(\bar{m}_0))+\frac{e^{c_1t}}{\sqrt{n}}+1-\mathbb{E}_{\mathcal{L}(m^n_0)}\left(\mathds{1}_{\tau_n>t}\right)\right).
\end{align}
\item \textbf{Intermediate control.} Let $\eta\in]\varepsilon,m_+[$. There exist $c_2, c_3, C_2>0$ such that for any $n\in\mathbb{N}$, for any $m^n_0,\overline{m}_0\in[\eta,1]$, any Lipschitz continuous function $f:[0,1]\to\R$ and any $t\geqslant 0$
\begin{align}
&\left|\E_{m^n_0}\left(f(m_t^n)|\tau_n>t\right)-f(\overline{m}_t)\right|\nonumber\\
&\qquad\qquad\leqslant C_2\|f\|_{\textrm{lip}}e^{c_3\frac{t}{\sqrt{n}}}\left( e^{-c_2t}|m^n_0-\overline{m}_0|+\frac{1}{\sqrt{n}}+e^{-c_2t}+1-e^{-c_3\frac{t}{\sqrt{n}}}\right).\label{eq:control_int}
\end{align}
\end{enumerate}

Let us provide some comments on these bounds. Using Theorem~\ref{thm:long-time-behavior}, we show that the QSD $\nu_{\infty}^n$ is close to $\delta_{m_+}$, as $n$ goes to infinity (see \eqref{eq:les_deux_sont_proches} below). This implies that, uniformly in $t\gtrsim n^{\alpha}$ for some $\alpha>0$, $\E\left(f(m_t^n)|\tau_n>t\right)$ is close to $\E f(\overline{m}_t)$ because both quantities are close to their respective limit. Then, in \eqref{eq:non-unif-poc}, we consider initial distributions that are such that $\mathbb{E}_{\mathcal{L}(m^n_0)}\left(\mathds{1}_{\tau_n>t}\right)\gtrsim e^{-\frac{ct}{\sqrt{n}}}$ (this is direct if $\mathcal{L}(m^n_0)$ is the QSD, see also Lemma~\ref{lem:estime-bn} in the case $\mathcal{L}(m^n_0)$ is a Dirac mass). The bound~\eqref{eq:non-unif-poc}, which is a direct consequence of Proposition~\ref{prop:PoCnonUnif}, can therefore be used in order to prove convergence uniformly in $t\lesssim \ln(n)$. It now remains to use \eqref{eq:control_int}  to deal with times  $\ln(n) \lesssim t\lesssim n^{\alpha}$. This control consists in a modification of the usual propagation of chaos proof, using the convexity of the underlying potential $g$ in an interval around $m_+$ and a Lyapunov type condition that tends to bring the processes in said interval. The bound~\eqref{eq:control_int}  is therefore a result akin to \textit{generation of chaos}: for the error $\left|\E\left(f(m_t^n)|\tau_n>t\right)-f(\overline{m}_t)\right|$ to be small enough, independently of the initial condition, it is sufficient to consider both $n$ and $t$ large enough (with $t$ still smaller than $\sqrt{n}$ in our case).

%
%
%
%

\begin{remark}
All constants given in Theorem~\ref{thm:long-time-behavior} and controls~\ref{eq:non-unif-poc}~and~\ref{eq:control_int}, i.e. $c,C$, $c_1,c_2,c_3$ and $C_1,C_2$, can be explicitly computed and depend only on $\beta$, $\eta$ and $\varepsilon$. For the sake of conciseness, and because we would not learn much from their explicit value, we choose not to write them.
\end{remark}

\subsection{Propagation of chaos for the killed process}\label{sec:preuve_estimee_1}

\begin{lemma}
    Estimate~\eqref{eq:non-unif-poc} holds.
\end{lemma}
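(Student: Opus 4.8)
The plan is to decompose the target quantity into a conditioned term and an unconditioned propagation-of-chaos term, exploiting the identity that relates conditional and unconditional expectations via the survival probability. First I would write, for a Lipschitz $f$ with (without loss of generality) $\|f\|_{\mathrm{lip}}\leqslant 1$,
\[
\E_{\mathcal{L}(m^n_0)}\!\left(f(m^n_t)\mid\tau_n>t\right) = \frac{\E_{\mathcal{L}(m^n_0)}\!\left(f(m^n_t)\mathds 1_{\tau_n>t}\right)}{\E_{\mathcal{L}(m^n_0)}\!\left(\mathds 1_{\tau_n>t}\right)},
\]
and then split
\[
\E_{\mathcal{L}(m^n_0)}\!\left(f(m^n_t)\mid\tau_n>t\right) - \E_{\mathcal{L}(\bar m_0)}f(\bar m_t)
= \underbrace{\frac{\E\!\left(f(m^n_t)\mathds 1_{\tau_n>t}\right) - \E\!\left(f(m^n_t)\right)}{\E\!\left(\mathds 1_{\tau_n>t}\right)}}_{\text{(I)}}
+ \underbrace{\E\!\left(f(m^n_t)\right)\!\left(\frac{1}{\E(\mathds 1_{\tau_n>t})}-1\right)}_{\text{(II)}}
+ \underbrace{\E\!\left(f(m^n_t)\right) - \E_{\mathcal{L}(\bar m_0)}f(\bar m_t)}_{\text{(III)}}.
\]
(All expectations with $m^n$ are under $m^n_0\sim\mathcal L(m^n_0)$.)

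For term (III), this is exactly unconditioned finite-time propagation of chaos: taking an optimal $\mathcal W_2$-coupling of $\mathcal L(m^n_0)$ and $\mathcal L(\bar m_0)$ and using that $f$ is $1$-Lipschitz together with Jensen, $|\text{(III)}| \leqslant \E|m^n_t-\bar m_t|\leqslant \sqrt{\E|m^n_t-\bar m_t|^2}\leqslant e^{ct}(\mathcal W_2(\mathcal L(m^n_0),\mathcal L(\bar m_0)) + C/\sqrt n)$ by Proposition~\ref{prop:PoCnonUnif}; renaming $c_1:=c$ and absorbing $C$ into $C_1$ gives the $e^{c_1 t}\mathcal W_2 + e^{c_1 t}/\sqrt n$ contribution. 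For terms (I) and (II), I would bound $f$ crudely: since $f$ is $1$-Lipschitz on $[0,1]$, $\|f\|_\infty \leqslant |f(m_+)| + 1$, but to keep only $\|f\|_{\mathrm{lip}}$ on the right-hand side I would instead center $f$, replacing $f$ by $f - f(\bar m_t)$ (which changes neither side of~\eqref{eq:non-unif-poc}) — wait, that shifts the unconditioned term too; cleaner is to note the left side is unchanged under $f\mapsto f+\text{const}$, and pick the constant so that $f(\overline m_t)=0$, whence on the support $[0,1]$ one has $|f|\leqslant \mathrm{diam}[0,1]\cdot\|f\|_{\mathrm{lip}} = \|f\|_{\mathrm{lip}}$. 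Then $|\text{(I)}| \leqslant \frac{\|f\|_{\mathrm{lip}}\,\E(\mathds 1_{\tau_n\leqslant t})}{\E(\mathds 1_{\tau_n>t})} = \|f\|_{\mathrm{lip}}\frac{1-\E(\mathds 1_{\tau_n>t})}{\E(\mathds 1_{\tau_n>t})}$ and $|\text{(II)}| \leqslant \|f\|_{\mathrm{lip}}\frac{1-\E(\mathds 1_{\tau_n>t})}{\E(\mathds 1_{\tau_n>t})}$ as well, which are both of the announced form $\leqslant C_1\|f\|_{\mathrm{lip}}\frac{1}{\E(\mathds 1_{\tau_n>t})}\big(1-\E(\mathds 1_{\tau_n>t})\big)$.

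Collecting the three bounds, dividing the whole thing by $\E(\mathds 1_{\tau_n>t})\leqslant 1$ (which only enlarges the right-hand side) to uniformize the prefactor, and choosing $C_1$ large enough, yields exactly~\eqref{eq:non-unif-poc}. The only mildly delicate point is the bookkeeping around the centering of $f$ and making sure the constant $c_1$ from Proposition~\ref{prop:PoCnonUnif} is the same one appearing in the statement; no genuine obstacle is expected, since this estimate is, as the text says, "a direct consequence of Proposition~\ref{prop:PoCnonUnif}."
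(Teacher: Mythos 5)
Your proof is correct and follows essentially the same route as the paper: express the conditional expectation as a ratio, split off the unconditioned propagation-of-chaos term (controlled exactly as you say via Proposition~\ref{prop:PoCnonUnif}), and bound the remaining killing terms by $\|f\|_{\mathrm{lip}}$ using the invariance of both sides under $f\mapsto f+\mathrm{const}$ together with $\E(\mathds 1_{\tau_n>t})\leqslant 1$. The only cosmetic slip is that you should center $f$ at a fixed point of $[0,1]$ (the paper uses $f(0)$) rather than at $\overline m_t$, which is random when $\overline m_0$ is; the resulting bound $\|f-f(0)\|_\infty\leqslant\|f\|_{\mathrm{lip}}$ is what you wanted, and the rest of your argument is unchanged.
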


\begin{proof}
Let $f:[0,1]\to\R$ be Lipschitz continuous, and $m_0^n, \overline m_0$ be random initial conditions. We have (without writing the dependency of the expectation on the initial condition for simplicity)
\begin{align*}
&\left|\E\left(f(m_t^n)|\tau_n>t\right)-\E f(\overline{m}_t)\right|\\
&\qquad=\frac{1}{\E(\mathds{1}_{\tau_n>t})}\left|\E\left(f(m_t^n)\mathds{1}_{\tau_n>t}\right)- \E f(\overline{m}_t)\E(\mathds{1}_{\tau_n>t}) \right|\\
&\qquad=\frac{1}{\E (\mathds{1}_{\tau_n>t})}\left|\E f(m_t^n)-\mathbb{E} f(\overline{m}_t)-\left(\E \left(f(m_t^n)\mathds{1}_{\tau_n\leqslant t}\right)-\E f(\overline{m}_t)\E \left(\mathds{1}_{\tau_n\leqslant t}\right)\right)\right|\\
&\qquad\leqslant \frac{1}{\E (\mathds{1}_{\tau_n>t})}\left|\E f(m_t^n)-\mathbb{E} f(\overline{m}_t)\right|\\
&\qquad\qquad+ \frac{1}{\E (\mathds{1}_{\tau_n>t})}\left|\E \left(f(m_t^n)\mathds{1}_{\tau_n\leqslant t}\right)-\E f(\overline{m}_t)\E \left(\mathds{1}_{\tau_n\leqslant t}\right)\right|.
\end{align*}
First, notice that we have
\begin{multline*}
\left|\E f(m_t^n)-\mathbb{E} f(\overline{m}_t)\right|
\leqslant \|f\|_{\textrm{lip}} \sup_{\|g\|_{\textrm{lip}}\leqslant 1}\left| \E\left(g(m_t^n)-g(\overline{m}_t)\right)\right| \leqslant \|f\|_{\textrm{lip}}\mathcal{W}_1\left(\mathcal{L}(m_t^n), \mathcal{L}(\overline{m}_t)\right)\\  \leqslant \|f\|_{\textrm{lip}}\mathcal{W}_2\left(\mathcal{L}(m_t^n), \mathcal{L}(\overline{m}_t)\right) 
\leqslant \|f\|_{\textrm{lip}} e^{ct}\left(\mathcal{W}_2\left(\mathcal{L}(m^n_0), \mathcal{L}(\overline{m}_0)\right)+\frac{C}{\sqrt{n}}\right),
\end{multline*}
where we used \eqref{eq:PoC_pas_unif} of Proposition~\ref{prop:PoCnonUnif} for the last inequality. Then, we also have
\begin{align*}
    \left|\E\left(f(m_t^n)\mathds{1}_{\tau_n\leqslant t}\right)-\E f(\overline{m}_t)\E \left(\mathds{1}_{\tau_n\leqslant t}\right)\right|
    &=\left|\E \left((f(m_t^n)-f(0))\mathds{1}_{\tau_n\leqslant t}\right)-\E \left(f(\overline{m}_t)-f(0)\right)\E \left(\mathds{1}_{\tau_n\leqslant t}\right)\right|\\
    & \leqslant \E \left(\left|f(m_t^n)-f(0)\right|\mathds{1}_{\tau_n\leqslant t}\right)+\E \left|f(\overline{m}_t)-f(0)\right|\E \left(\mathds{1}_{\tau_n\leqslant t}\right)\\
    & \leqslant 2\|f\|_{\textrm{lip}}\mathbb{E} (\mathds{1}_{\tau_n\leqslant t}).
\end{align*}
Hence the result.
\end{proof}

%
%
%
%

\subsection{An auxiliary process}\label{sec:aux_proc}

The goal of this section is to establish \eqref{eq:control_int}. To do so, we start by constructing two processes, $\mu^n$ and $\overline{\mu}$, which mostly behave like $m^n$ and $\overline{m}$ and for which we prove not a result of uniform in time propagation of chaos, but of \textit{generation of chaos}: convergence holds for $n$ and $t$ large enough, even if the initial conditions do not converge to one another as $n\rightarrow\infty$. Consider the jump rates for $\mu^n_t$ 
\begin{align*}
    m\mapsto m-\frac{2}{n}\quad \text{with rate}\quad \tilde{\lambda}^n_-(m)=\left\{\begin{array}{ll}\lambda^n_-(m)&\text{if }m\geqslant \varepsilon,\\
    \lambda^n_-(\varepsilon)\frac{m}{\varepsilon}&\text{if }m\in[0,\varepsilon]\text{ and }n\text{ even},\\
    \lambda^n_-(\varepsilon)\frac{m-\frac{1}{n}}{\varepsilon-\frac{1}{n}}&\text{if }m\in[0,\varepsilon]\text{ and }n\text{ odd},
    \end{array}\right.
\end{align*}
and
\begin{align*}
    m\mapsto m+\frac{2}{n}\quad \text{with rate}\quad \tilde{\lambda}^n_+(m)=\left\{\begin{array}{ll}\lambda^n_+(m)&\text{if }m\geqslant \varepsilon,\\ \lambda^n_+(\varepsilon)+\left(\tilde{\lambda}_-^n(m)-\lambda^n_-(\varepsilon)\right)&\text{if }m\in[0,\varepsilon].\end{array}\right.
\end{align*}
These rates are chosen such that, provided $m_0^n=\mu_0^n$, a trivial coupling ensures $m_t^n=\mu_t^n$ while $t<\tau_n$. Furthermore, by construction, $\mu^n_t\in[0,1]$ almost surely, as $\tilde{\lambda}^n_-(0)=0$ (resp. $\tilde{\lambda}^n_-(\frac{1}{n})=0$)  if $n$ is even (resp. odd) and thus $0$ (resp. $\frac{1}{n}$) is the left most point of $E^n$  that can be reached by this process. We define its generator by
\begin{align*}
\mathcal{B}_nf(m)=\tilde{\lambda}^n_+(m)\left(f\left(m+\frac{2}{n}\right)-f(m)\right)+\tilde{\lambda}^n_-(m)\left(f\left(m-\frac{2}{n}\right)-f(m)\right).
\end{align*}
When $n\rightarrow\infty$, we may easily check that this generator converges towards
\begin{align*}
\mathcal{B}f(m)=-U'(m)f'(m),
\end{align*}
with
\begin{equation}\label{eq:def_U}
U'(m)=-\frac{2}{n}\left(\tilde{\lambda}^n_+(m)-\tilde{\lambda}^n_-(m)\right)=\left\{\begin{array}{ll}g'(m)&\text{if }m\geqslant \varepsilon,\\ g'(\varepsilon)&\text{if }m\in[0,\varepsilon].\end{array}\right.
\end{equation}
We may then define $\overline{\mu}_t$ as the solution to the ODE
\begin{align*}
    \frac{d}{dt}\overline{\mu}_t=-U'(\overline{\mu}_t).
\end{align*}
Again, provided $\overline{m}_0=\overline{\mu}_0\geqslant \epsilon$, we have  $\overline{m}_t=\overline{\mu}_t$ for all $t\geqslant 0$. We now define $U$ as the primitive of $U'$ such that $U(m_+)=0$, thus ensuring $U=g$ on $[\varepsilon,1]$. See Figure~\ref{fig:potentiel_U}. Let us start by proving a Lyapunov condition


\begin{figure}
    	\centering
	\includegraphics[width=0.5\linewidth]{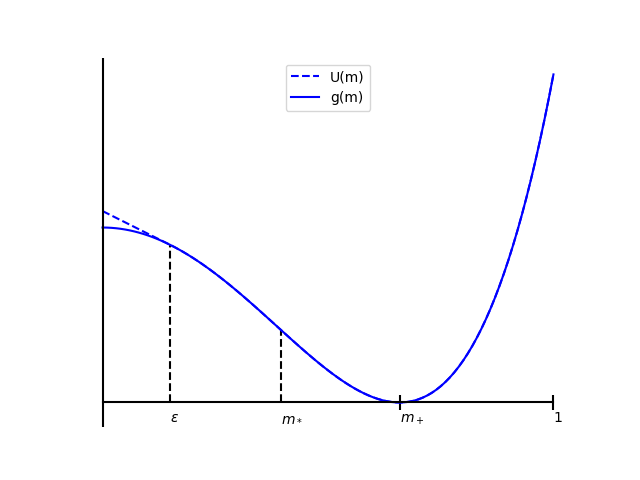}
	\caption{Potential $U$ defined in \eqref{eq:def_U} for $\beta=1.2$. It consists in a modification of $g$ on $[0,\varepsilon]$. Furthermore, we consider $m_*$ a point such that $g''(m)\geqslant g''(m_*)>0$ for all $m\in[m_*,1]$, thus defining a set on which the underlying potential is strictly convex.}
 \label{fig:potentiel_U}
\end{figure}


\begin{lemma}\label{lem:lya_aux}
    There exist $c, C>0$ such that we have for all $t\geqslant 0$ and all $n\in\mathbb{N}$
    \begin{equation}\label{eq:lya_aux}
        \mathbb{E}\left(U(\mu_t^n)\right)\leqslant e^{-ct}\mathbb{E}\left(U(\mu_0^n)\right)+\frac{C}{n},
    \end{equation}
    as well as
    \begin{equation}\label{eq:lya_aux_nl}
        U(\overline{\mu}_t)\leqslant e^{-ct}U(\overline{\mu}_0).
    \end{equation}
\end{lemma}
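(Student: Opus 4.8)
The plan is to follow the proof of Lemma~\ref{lem:lyapunov-CW-killed} almost verbatim, with $g$ replaced by $U$; the one genuinely new point is that $U$ is only $C^1$ (not $C^2$) at the gluing point $\varepsilon$, which forces a small adjustment in the Taylor control of the discrete generator.

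\emph{Step 1: a functional inequality for $U$.} First I would show that there is $c>0$ with $(U'(m))^2\geqslant c\,U(m)$ for all $m\in[0,1]$. Recall $U$ vanishes only at $m_+$, where it coincides with $g$; by Lemma~\ref{lem:mini-non-degenere} we have $g''(m_+)>0$, hence $(U'(m))^2/U(m)\to 2g''(m_+)>0$ as $m\to m_+$. Away from $m_+$ one has $U(m)>0$ and $U'(m)\neq 0$ — on $[\varepsilon,1]$ because $m_+$ is the unique critical point of $g$ there, and on $[0,\varepsilon]$ because $U'\equiv g'(\varepsilon)\neq 0$. Thus $m\mapsto(U'(m))^2/U(m)$ is continuous on the compact set $[0,1]$ (with the removable singularity at $m_+$ filled in) and strictly positive, so it is bounded below. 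In particular $\mathcal{B}U=-(U')^2\leqslant -cU$ on $[0,1]$.

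\emph{Step 2: the deterministic bound~\eqref{eq:lya_aux_nl}.} Differentiating along the flow, $\frac{d}{dt}U(\overline{\mu}_t)=U'(\overline{\mu}_t)\frac{d}{dt}\overline{\mu}_t=-(U'(\overline{\mu}_t))^2\leqslant -c\,U(\overline{\mu}_t)$, and Grönwall's lemma yields $U(\overline{\mu}_t)\leqslant e^{-ct}U(\overline{\mu}_0)$.

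\emph{Step 3: the stochastic bound~\eqref{eq:lya_aux}.} The function $U'$ is continuous on $[0,1]$, equal to the constant $g'(\varepsilon)$ on $[0,\varepsilon]$ and to the smooth function $g'$ on $[\varepsilon,1]$, hence globally Lipschitz with $\|U'\|_{\textrm{lip}}\leqslant\|g''\|_{\infty}<\infty$; therefore the second-order Taylor inequality $\bigl|U(m\pm\tfrac2n)-U(m)\mp\tfrac2n U'(m)\bigr|\leqslant \tfrac{2}{n^2}\|U'\|_{\textrm{lip}}$ holds for every $m$, even though $U\notin C^2$. Combining this with the identity $U'(m)=-\tfrac2n(\tilde{\lambda}^n_+(m)-\tilde{\lambda}^n_-(m))$ from~\eqref{eq:def_U} (so the first-order terms sum exactly to $-(U'(m))^2$) and with the uniform bound $\tilde{\lambda}^n_\pm(m)\leqslant Cn$ on $[0,1]$, one gets
\[
\mathcal{B}_nU(m)=-(U'(m))^2+R_n(m),\qquad \sup_{m\in E_n\cap[0,1]}|R_n(m)|\leqslant\frac{C}{n},
\]
hence $\mathcal{B}_nU(m)\leqslant -c\,U(m)+C/n$ by Step 1. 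Since $\mu^n$ lives on the finite set $E_n\cap[0,1]$ with bounded jump rates and $U$ is bounded, Dynkin's formula gives $\frac{d}{dt}\mathbb{E}(U(\mu^n_t))=\mathbb{E}(\mathcal{B}_nU(\mu^n_t))\leqslant -c\,\mathbb{E}(U(\mu^n_t))+C/n$, and Grönwall's lemma yields~\eqref{eq:lya_aux}.

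The only subtle step is Step 3, and only mildly: the lack of $C^2$ regularity at $\varepsilon$ is harmless since the Taylor remainder is controlled by $\|U'\|_{\textrm{lip}}$, which is finite. (The boundary evaluations $U(0-\tfrac2n)$ and $U(1+\tfrac2n)$, which would fall outside $[0,1]$, never enter, as the corresponding rates $\tilde{\lambda}^n_-$ at the left endpoint and $\tilde{\lambda}^n_+=\lambda^n_+(1)=0$ at $m=1$ vanish.) Everything else is a routine rerun of the estimates already carried out for $g$ on $[\varepsilon,1]$ in Lemma~\ref{lem:lyapunov-CW-killed}.
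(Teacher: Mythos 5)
Your proof is correct and follows essentially the same route as the paper: the functional inequality $(U')^2\geqslant cU$ on $[0,1]$, Grönwall for the deterministic flow, and a Taylor comparison $\mathcal{B}_nU\leqslant\mathcal{B}U+C/n$ followed by Grönwall for the jump process. The paper already records, without elaboration, that the error constant depends on $\|U'\|_{\textrm{lip}}$ rather than $\|U''\|_\infty$ — precisely the point you make explicit in Step~3 to handle the mere $C^1$-regularity of $U$ at $\varepsilon$.
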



\begin{proof}
We have $\mathcal{B}U(m)=-(U'(m))^2$ and we can easily check that there exists $c>0$ such that for all $m\in[0,1]$
\begin{align*}
    \frac{\left|U'(m)\right|^2}{U(m)}\geqslant c.
\end{align*}
Therefore $\mathcal{B}U(m)\leqslant -cU(m)$, i.e. $$
\frac{d}{dt}U(\overline{\mu}_t)\leqslant -cU(\overline{\mu}_t),$$
and thus by Gronwall's lemma we obtain \eqref{eq:lya_aux_nl}. Then, we have 
\begin{align*}
\mathcal{B}_nU(m)\leqslant\mathcal{B}U(m)+\frac{\C}{n}\leqslant -cU(m)+\frac{C}{n},
\end{align*}
where $C$ is a constant depending only on $\|U'\|_{\textrm{lip}}$, $\varepsilon$ and $\beta$. Finally, like previously, Gronwall's lemma yields \eqref{eq:lya_aux}.
\end{proof}


\begin{lemma}\label{lem:presque_poc_unif}
There exist $c,C>0$ such that for all $t\geqslant 0$, all $n\geqslant 0$ and all $\mu_0^n,\overline{\mu}_0\in[0,1]$, we have
\begin{align*}
    \mathcal{W}_2(\mathcal{L}(\mu_t^n),\delta_{\overline{\mu}_t})\leqslant e^{-ct}\left(|\mu_0^n-\overline{\mu}_0|+C\right)+\frac{C}{\sqrt{n}}.
\end{align*}
\end{lemma}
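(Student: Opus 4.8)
The plan is to bound $\phi(t):=\mathbb{E}\bigl(|\mu_t^n-\overline{\mu}_t|^2\bigr)$ directly, observing that $\mathcal{W}_2(\mathcal{L}(\mu_t^n),\delta_{\overline{\mu}_t})=\sqrt{\phi(t)}$ because the infimum in the definition of $\mathcal{W}_2$ is attained trivially when one marginal is a Dirac mass. Testing the generator $\mathcal{B}_n$ against $m\mapsto m$ and $m\mapsto m^2$, one gets, exactly as in the proof of Proposition~\ref{prop:PoCnonUnif} but with $\mathcal{A}_n,g$ replaced by $\mathcal{B}_n,U$, that $\mathcal{B}_n(m\mapsto m)=-U'$ (no error term, by \eqref{eq:def_U}) and $\mathcal{B}_n(m\mapsto m^2)(m)=-2mU'(m)+\tfrac{4}{n^2}\bigl(\tilde\lambda_+^n(m)+\tilde\lambda_-^n(m)\bigr)$, the last term being $\leqslant C/n$ uniformly since $\tilde\lambda_\pm^n=O(n)$ on $[0,1]$; together with $\dot{\overline{\mu}}_t=-U'(\overline{\mu}_t)$ this yields
\begin{equation*}
\frac{d}{dt}\phi(t)=-2\,\mathbb{E}\bigl[(U'(\mu_t^n)-U'(\overline{\mu}_t))(\mu_t^n-\overline{\mu}_t)\bigr]+R_n(t),\qquad |R_n(t)|\leqslant\frac{C}{n}.
\end{equation*}
Since $U'$ is not globally monotone this does not close the estimate; the key is to exploit the strict convexity of $U$ near its unique minimiser $m_+$. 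Fix $m_*<m_+$ as in Figure~\ref{fig:potentiel_U}, so that $U''=g''\geqslant g''(m_*)>0$ on $[m_*,1]$, and set $\delta_*:=\tfrac12\inf_{[0,m_*]}U>0$ (positive since $U$ vanishes only at $m_+$).

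Next I would isolate a deterministic time $t_1$, depending only on $U$ (hence on $\beta,\varepsilon$) and not on $n$, such that $\overline{\mu}_t\in[m_*,1]$ for all $t\geqslant t_1$: by \eqref{eq:lya_aux_nl} we have $U(\overline{\mu}_t)\leqslant e^{-ct}\|U\|_\infty$, and $\{U\leqslant\delta_*\}\subset(m_*,1]$, so $t_1\geqslant c^{-1}\ln(\|U\|_\infty/\delta_*)$ works. For $t\geqslant t_1$ I would split the expectation according to whether $\mu_t^n\geqslant m_*$ or not: on $\{\mu_t^n\geqslant m_*\}$ both points lie in $[m_*,1]$, where $(U'(x)-U'(y))(x-y)\geqslant g''(m_*)(x-y)^2$; on $\{\mu_t^n<m_*\}$ I bound crudely by $(U'(x)-U'(y))(x-y)\geqslant-\|U''\|_\infty(x-y)^2\geqslant-\|U''\|_\infty$, and control the probability of this event via \eqref{eq:lya_aux} and Markov's inequality, using $\{\mu_t^n<m_*\}\subset\{U(\mu_t^n)\geqslant 2\delta_*\}$ to get $\mathbb{P}(\mu_t^n<m_*)\leqslant(2\delta_*)^{-1}\bigl(e^{-ct}\|U\|_\infty+C/n\bigr)$. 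Combining these gives $\frac{d}{dt}\phi(t)\leqslant-2g''(m_*)\phi(t)+C\bigl(e^{-ct}+\tfrac1n\bigr)$ for all $t\geqslant t_1$.

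To conclude, on the bounded interval $[0,t_1]$ a crude Grönwall estimate from $\frac{d}{dt}\phi(t)\leqslant 2\|U''\|_\infty\phi(t)+C/n$ gives $\phi(t_1)\leqslant C\bigl(|\mu_0^n-\overline{\mu}_0|^2+\tfrac1n\bigr)$ with $C$ depending on $t_1$; then variation of constants on $[t_1,\infty)$ against the contraction inequality above yields $\phi(t)\leqslant Ce^{-c't}\bigl(|\mu_0^n-\overline{\mu}_0|^2+1\bigr)+C/n$ for $t\geqslant t_1$ (the $e^{-ct}$ source term integrates to one of the form $Ce^{-c''t}$, up to a harmless polynomial factor when the two rates coincide, absorbed into a slightly smaller exponent). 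Taking square roots, using $\sqrt{a+b+c}\leqslant\sqrt a+\sqrt b+\sqrt c$, and enlarging $C$ so the resulting bound also covers $t\in[0,t_1]$ (where $e^{-ct}$ is bounded below by a positive constant, which absorbs the crude bound on $\phi$), one obtains $\mathcal{W}_2(\mathcal{L}(\mu_t^n),\delta_{\overline{\mu}_t})\leqslant e^{-ct}\bigl(|\mu_0^n-\overline{\mu}_0|+C\bigr)+C/\sqrt n$ for all $t\geqslant0$ and $n\in\mathbb{N}$. The main obstacle is the second step: extracting genuine exponential contraction from a non-convex potential. Resolving it by waiting until the deterministic flow has entered the convex well around $m_+$ and then paying a decaying price for the rare event that $\mu^n$ has not yet followed is exactly what makes the statement a generation-of-chaos result rather than a uniform-in-time propagation-of-chaos one, and is why a floor of size $1/\sqrt n$ (rather than $0$) remains.
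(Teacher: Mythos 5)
Your argument is correct and rests on the same two ingredients as the paper: exploit the strict convexity of $U$ on $[m_*,1]$ to get contraction, and control the probability of $\mu_t^n$ sitting to the left of $m_*$ via Markov's inequality and the Lyapunov estimate of Lemma~\ref{lem:lya_aux}. The one genuine structural difference is how you handle the deterministic trajectory: you introduce a fixed entry time $t_1$ after which $\overline{\mu}_t$ is guaranteed to lie in $[m_*,1]$, run a crude Gr\"onwall bound on $[0,t_1]$ and a contraction on $[t_1,\infty)$, and stitch the two regimes together at the end. The paper avoids this split entirely by treating the deterministic indicator exactly as the stochastic one, namely $\mathds{1}_{\overline{\mu}_t<m_*}\leqslant U(\overline{\mu}_t)/U(m_*)\leqslant e^{-\tilde{c}t}U(\overline{\mu}_0)/U(m_*)$ via~\eqref{eq:lya_aux_nl}, which produces a single differential inequality $\frac{d}{dt}\mathbb{E}|\mu_t^n-\overline{\mu}_t|^2\leqslant -2c\,\mathbb{E}|\mu_t^n-\overline{\mu}_t|^2 + Ce^{-\tilde{c}t} + C/n$ valid for all $t\geqslant 0$ that is then solved in one shot. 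Both routes close; the paper's is marginally leaner since it needs no stitching step and keeps the coefficient in front of $|\mu_0^n-\overline{\mu}_0|$ equal to $1$ automatically, while yours picks up a multiplicative factor from the crude Gr\"onwall bound on $[0,t_1]$ which must then be absorbed into the additive constant $C$ using $|\mu_0^n-\overline{\mu}_0|\leqslant 1$ and the independence of $t_1$ from $n$, as you correctly indicate.
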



\begin{proof}
Define $m_*\in]\varepsilon,m_+[$ such that there exists some $c>0$ such that $U''(m)=g''(m)\geqslant c$ for all $m\geqslant m_*$. Such a point always exists (see Figure~\ref{fig:potentiel_U}) as a simple calculation ensures that $g'''(m)>0$ for $m>0$, with additionally $g''(m_+)>0$ by Lemma~\ref{lem:mini-non-degenere}. Let us compute
\begin{align*}
\frac{d}{dt}\mathbb{E}\left|\mu_t^n-\overline{\mu}_t\right|^2\leqslant&-2\mathbb{E}\left((U'(\mu_t^n)-U'(\overline{\mu}_t)(\mu_t^n-\overline{\mu}_t)\right)+\frac{8e^\beta}{n}\\
=&-2\mathbb{E}\left((U'(\mu_t^n)-U'(\overline{\mu}_t)(\mu_t^n-\overline{\mu}_t)\mathds{1}_{\mu_t^n\geqslant m_*\text{ and }\overline{\mu}_t\geqslant m_*}\right)\\
&-2\mathbb{E}\left((U'(\mu_t^n)-U'(\overline{\mu}_t)(\mu_t^n-\overline{\mu}_t)\mathds{1}_{\mu_t^n\leqslant m_*\text{ or }\overline{\mu}_t\leqslant m_*}\right)\\
&+\frac{8e^\beta}{n}.
\end{align*}
First, since $U''(m)\geqslant c>0$ for all $m\geqslant m_*$, we have
\begin{align*}
&-2\mathbb{E}\left((U'(\mu_t^n)-U'(\overline{\mu}_t)(\mu_t^n-\overline{\mu}_t)\mathds{1}_{\mu_t^n\geqslant m_*\text{ and }\overline{\mu}_t\geqslant m_*}\right)\\
&\qquad\qquad\leqslant -2c \mathbb{E}\left(\left|\mu_t^n-\overline{\mu}_t\right|^2\mathds{1}_{\mu_t^n\geqslant m_*\text{ and }\overline{\mu}_t\geqslant m_*}\right)\\
&\qquad\qquad= -2c \mathbb{E}\left|\mu_t^n-\overline{\mu}_t\right|^2 
+2c \mathbb{E}\left(\left|\mu_t^n-\overline{\mu}_t\right|^2\mathds{1}_{\mu_t^n\leqslant m_*\text{ or }\overline{\mu}_t\leqslant m_*}\right).
\end{align*}
Likewise, since $\|U'\|_{\textrm{lip}} < \infty$, we have 
\begin{align*}
-2\mathbb{E}\left((U'(\mu_t^n)-U'(\overline{\mu}_t)(\mu_t^n-\overline{\mu}_t)\mathds{1}_{\mu_t^n\leqslant m_*\text{ or }\overline{\mu}_t\leqslant m_*}\right)\leqslant 2\|U'\|_{\textrm{lip}} \mathbb{E}\left(\left|\mu_t^n-\overline{\mu}_t\right|^2\mathds{1}_{\mu_t^n\leqslant m_*\text{ or }\overline{\mu}_t\leqslant m_*}\right).
\end{align*}
It now remains to compute
\begin{align*}
\mathbb{E}\left(\left|\mu_t^n-\overline{\mu}_t\right|^2\mathds{1}_{\mu_t^n\leqslant m_*\text{ or }\overline{\mu}_t\leqslant m_*}\right)\leqslant \mathbb{E}\left(\mathds{1}_{\mu_t^n\leqslant m_*}\right)+\mathds{1}_{\overline{\mu}_t\leqslant m_*}.
\end{align*}
Note that $\{m\leqslant m_*\}\implies\{U(m)\geqslant U(m_*)\}$ and thus, thanks to Lemma~\ref{lem:lya_aux}, there exist $\tilde{c}, \tilde{C}>0$ such that
\begin{align*}
    \mathbb{E}\left(\mathds{1}_{\mu_t^n\leqslant m_*}\right)\leqslant \mathbb{E}\left(\frac{U(\mu_t^n)}{U(m_*)}\right)\leqslant e^{-\tilde{c}t}\frac{U(\mu_0^n)}{U(m_*)}+\frac{\tilde{C}}{U(m_*)n}.
\end{align*}
Likewise
\begin{align*}
\mathds{1}_{\overline{\mu}_t\leqslant m_*}\leqslant \frac{U\left(\overline{\mu}_t\right)}{U(m_*)}\leqslant e^{-\tilde{c}t}\frac{U\left(\overline{\mu}_0\right)}{U(m_*)}.
\end{align*}
Finally we get
\begin{align*}
    \frac{d}{dt}\mathbb{E}\left|\mu_t^n-\overline{\mu}_t\right|^2\leqslant& -2c\mathbb{E}\left|\mu_t^n-\overline{\mu}_t\right|^2+\frac{1}{n}\left(8e^{\beta}+2\frac{\tilde{C}(c+\|U'\|_{\textrm{lip}})}{U(m_*)}\right)\\
    &+2e^{-\tilde{c}t}\left(c+\|U'\|_{\textrm{lip}}\right)\left(\frac{U(\mu_0^n)}{U(m_*)}+\frac{U\left(\overline{\mu}_0\right)}{U(m_*)}\right).
\end{align*}
Note that, because $U$ is bounded and $U(m_*)>0$, this actually means that there exists $c,\tilde{c},C>0$ such that
\begin{align*}
\frac{d}{dt}\mathbb{E}\left|\mu_t^n-\overline{\mu}_t\right|^2\leqslant& -2c\mathbb{E}\left|\mu_t^n-\overline{\mu}_t\right|^2+e^{-\tilde{c}t}C+\frac{C}{n},
\end{align*}
and, up to modifying the constant and losing some precision, we can assume that $2c\neq \tilde{c}$. Computing the derivative of the function
\begin{align*}
    t\mapsto e^{2ct}\left(\mathbb{E}\left|\mu_t^n-\overline{\mu}_t\right|^2-\frac{C}{2c-\tilde{c}}e^{-\tilde{c}t}-\frac{C}{2cn}\right)
\end{align*}
we obtain that it is non increasing, and hence the result.
\end{proof}


\begin{proof}[Proof of bound~\eqref{eq:control_int}]
We may now prove \eqref{eq:control_int}. Similarly as in Section~\ref{sec:preuve_estimee_1}, let $f:[0,1]\to\R$ be a Lipschitz test function and compute
\begin{align}
\left|\E_{m^n_0}\left(f(m_t^n)|\tau_n>t\right)-f(\overline{m}_t)\right|=&\frac{1}{\E_{m^n_0}(\mathds{1}_{\tau_n>t})}\left|\E_{m^n_0}\left(\left(f(m_t^n)-f(\overline{m}_t)\right)\mathds{1}_{\tau_n>t}\right)\right|\nonumber\\
=&\frac{1}{\E_{m^n_0}(\mathds{1}_{\tau_n>t})}\left|\E_{m^n_0}\left(\left(f(\mu_t^n)-f(\overline{\mu}_t)\right)\mathds{1}_{\tau_n>t}\right)\right|\nonumber\\
\leqslant& \frac{1}{\E_{m^n_0}(\mathds{1}_{\tau_n>t})}\left|\E_{m^n_0}\left(f(\mu_t^n)-f(\overline{\mu}_t)\right)\right|\nonumber\\
&+ \frac{1}{\E_{m^n_0}(\mathds{1}_{\tau_n>t})}\left|\E_{m^n_0}\left(\left(f(\mu_t^n)-f(\overline{\mu}_t)\right)\mathds{1}_{\tau_n\leqslant t}\right)\right|.\label{eq:gen_chaos_int_0}
\end{align}
Using Lemma~\ref{lem:presque_poc_unif} we get
\begin{equation}
\left|\E_{m^n_0}\left(f(\mu_t^n)-f(\overline{\mu}_t)\right)\right|
\leqslant \|f\|_{\textrm{lip}}\mathcal{W}_2\left(\mathcal{L}(\mu_t^n), \delta_{\overline{\mu}_t}\right)\nonumber
\leqslant \|f\|_{\textrm{lip}} e^{-ct}\left(\left|m_0^n-\overline{m}_0\right|+C\right)+\frac{C}{\sqrt{n}},\label{eq:gen_chaos_int_1}
\end{equation}
Then, we also have 
\begin{align}
\left|\E_{m^n_0}\left(\left(f(\mu_t^n)-f(\overline{\mu}_t)\right)\mathds{1}_{\tau_n\leqslant t}\right)\right|\leqslant \E_{m^n_0}\left(\left|f(\mu_t^n)-f(\overline{\mu}_t)\right|\mathds{1}_{\tau_n\leqslant t}\right)
\leqslant \|f\|_{\textrm{lip}}\mathbb{P}_{m^n_0}(\tau_n\leqslant t).\label{eq:gen_chaos_int_2}
\end{align}
It now remains to control $\mathbb{P}_{m^n_0}\left(\tau_n>t\right)=\E_{m^n_0}(\mathds{1}_{\tau_n>t})$.
By definition of $h_n$
\begin{align*}
\E_{m^n_0}(\mathds{1}_{\tau_n>t})=M^n_t\mathds{1}(m^n_0)\geqslant M^n_th_n(m^n_0) = e^{-b_nt}h_n(m^n_0).
\end{align*}
By Lemma~\ref{lem:estime-hn}, since $m^n_0\in[\eta,1]$, there exists $C$ independent of $n$ and $m^n_0$ (but not of $\eta$) such that $h_n(m^n_0)\geqslant C$. Then, by Lemma~\ref{lem:estime-bn}, there exist $c>0$ such that $0<b_n\leqslant\frac{c}{\sqrt{n}}$. Therefore, 
\begin{equation}\label{eq:gen_chaos_int_3}
\E_{m^n_0}(\mathds{1}_{\tau_n>t})=\mathbb{P}_{m^n_0}\left(\tau_n>t\right)\geqslant Ce^{-c\frac{t}{\sqrt{n}}}.
\end{equation}
Plugging~\eqref{eq:gen_chaos_int_1},~\eqref{eq:gen_chaos_int_2}~and~\eqref{eq:gen_chaos_int_3} back into~\eqref{eq:gen_chaos_int_0} yields~\eqref{eq:control_int} and hence concludes the proof.
\end{proof}

%
%
%
%

\subsection{Proof of Theorem~\ref{thm:final_result}}

We now prove the main result, Theorem~\ref{thm:final_result}.

\begin{proof}[Proof of Theorem~\ref{thm:final_result}] 

\textbf{$\bullet$ Long-time convergence of $\overline{m}_t$.} Let us start by showing that there exist $C,c>0$ such that for all $t\geqslant 0$
\begin{equation}\label{eq:conv-limit-depart-qsd}
\left|\mathbb{E}_{\nu^n_\infty}(f(\overline{m}_t))-f(m_+)\right|\leqslant C \|f\|_{\textrm{lip}} e^{-ct}.
\end{equation}
In fact, for any initial condition $\overline{m}_0\in[\varepsilon,1]$, we have
\begin{align*}
\left|f(\overline{m}_t)-f(m_+)\right|\leqslant C  \|f\|_{\textrm{lip}}e^{-ct}.
\end{align*}
The proof is very similar to that of Lemmas~\ref{lem:lya_aux}~and~\ref{lem:presque_poc_unif}.
Let us prove the second point, as integrating over $\nu^n_\infty$ would then yield the result. In fact, since
\begin{align*}
    \left|f(\overline{m}_t)-f(m_+)\right|\leqslant \|f\|_{\textrm{lip}}\left|\overline{m}_t-m_+\right|,
\end{align*}
we only have to control the difference between $\overline{m}_t$ and $m_+$. Define $m_*\in]\varepsilon,m_+[$ such that there exists some $c>0$ such that $g''(m)\geqslant c$ for all $m\geqslant m_*$. We have
\begin{align*}
    \frac{d}{dt}\left|\overline{m}_t-m_+\right|^2=&-2g'(\overline{m}_t)(\overline{m}_t-m_+)\\
    =&-2\left(g'(\overline{m}_t)-g'(m_+)\right)\left(\overline{m}_t-m_+\right)\mathds{1}_{\overline{m}_t\geqslant m_*}-2g'(\overline{m}_t)\left(\overline{m}_t-m_+\right)\mathds{1}_{\overline{m}_t< m_*}\\
    \leqslant& -2c |\overline{m}_t-m_+|^2\mathds{1}_{\overline{m}_t\geqslant m_*}+2\|g'\|_\infty\mathds{1}_{\overline{m}_t< m_*}\\
    \leqslant& -2c |\overline{m}_t-m_+|^2+2(c+\|g'\|_\infty)\mathds{1}_{\overline{m}_t< m_*}.
\end{align*}
We then have for some $c'>0$, similarly as Lemma~\ref{lem:lya_aux}
\begin{align*}
    \mathds{1}_{\overline{m}_t< m_*}\leqslant \frac{g(\overline{m}_t)}{g(m_*)}\leqslant e^{-c't}\frac{g(\overline{m}_0)}{g(m_*)}.
\end{align*}
We may now conclude like we concluded Lemma~\ref{lem:presque_poc_unif}, that is by using the fact that the function
\begin{align*}
t\longrightarrow e^{2ct}\left(\left|\overline{m}_t-m_+\right|^2-\frac{2(c+\|g'\|_\infty)g(\overline{m}_0)}{g(m_*)(2c-c')}e^{-c't}\right),
\end{align*}
is non-increasing, which concludes the proof of~\eqref{eq:conv-limit-depart-qsd}.

\textbf{$\bullet$ Convergence of $\nu^n_\infty$ to $\delta_{m_+}$.} Consider a Lipschitz continuous function $f:[0,1]\mapsto \mathbb{R}$. Because $[0,1]$ is compact, we know that $f$ bounded. Let us prove that the quasi-stationary distribution $\nu^n_\infty$ is close to $\delta_{m_+}$. Consider $m_0^n$ distributed according to $\nu^n_\infty$. We have for all $t\geqslant 0$
\begin{align}
\left|\nu^n_\infty(f)-f(m_+)\right|=&\left|\mathbb{E}_{\nu^n_\infty}\left(f(m_0^n)\Big|\tau_n>0\right)-f(m_+)\right|\nonumber\\
=&\left|\mathbb{E}_{\nu^n_\infty}\left(f(m_t^n)\Big|\tau_n>t\right)-f(m_+)\right|\text{ since }\nu^n_\infty\text{ is (quasi-)stationary}.\nonumber\\
\leqslant&\left|\mathbb{E}_{\nu^n_\infty}\left(f(m_t^n)\Big|\tau_n>t\right)-\mathbb{E}_{\nu^n_\infty}(f(\overline{m}_t))\right|\nonumber\\
&+\left|\mathbb{E}_{\nu^n_\infty}(f(\overline{m}_t))-f(m_+)\right|,\label{eq:preuve_final_int_1}
\end{align}
where $\overline{m}_t$ is considered with initial condition $m_0^n$. First, by estimate \eqref{eq:non-unif-poc}
\begin{equation}\label{eq:preuve_final_int_2}
\left|\mathbb{E}_{\nu^n_\infty}\left(f(m_t^n)\Big|\tau_n>t\right)-\mathbb{E}_{\nu^n_\infty}(f(\overline{m}_t))\right|\leqslant C_1\frac{\|f\|_{\textrm{lip}} }{\mathbb{E}_{\nu^n_\infty}\left(\mathds{1}_{\tau_n>t}\right)}\left(\frac{e^{c_1t}}{\sqrt{n}}+1-\mathbb{E}_{\nu^n_\infty}\left(\mathds{1}_{\tau_n>t}\right)\right).
\end{equation}
Because $\nu^n_{\infty}$ is a QSD, we have that $\mathbb{E}_{\nu^n_\infty}\left(\mathds{1}_{\tau_n>t}\right)=e^{-b_nt}$. See for instance \cite[Proposition~2]{MV12} for a proof of this fact. Hence, there exists $c_2>0$ such that $\mathbb{E}_{\nu^n_\infty}\left(\mathds{1}_{\tau_n>t}\right)\geqslant e^{-c_2\frac{t}{\sqrt{n}}}$. Plugging \eqref{eq:conv-limit-depart-qsd} and \eqref{eq:preuve_final_int_2} back into \eqref{eq:preuve_final_int_1}, we obtain the existence of $c_1, c_2, c_3, C>0$ such that for all $n$ and all $t\geqslant 0$
\begin{align*}
\left|\nu^n_\infty(f)-f(m_+)\right|\leqslant C \|f\|_{\textrm{lip}} \left(e^{c_2\frac{t}{\sqrt{n}}}\left(1-e^{-c_2\frac{t}{\sqrt{n}}}\right)+\frac{e^{c_2\frac{t}{\sqrt{n}}+c_1t}}{\sqrt{n}}+e^{-c_3t}\right).
\end{align*}
Choosing $t=\frac{1}{2(c_1+c_2+c_3)}\ln(n)$ we get
\begin{align*}
    \frac{e^{c_2\frac{t}{\sqrt{n}}+c_1t}}{\sqrt{n}}+e^{-c_3t}\leqslant \frac{e^{(c_1+c_2)t}}{\sqrt{n}}+e^{-c_3t}\leqslant n^{-\frac{c_3}{2(c_1+c_2+c_3)}}+n^{-\frac{1}{2}+\frac{c_1+c_2}{2(c_1+c_2+c_3)}}=2n^{-\frac{c_3}{2(c_1+c_2+c_3)}},
\end{align*}
and
\begin{align*}
e^{c_2\frac{t}{\sqrt{n}}}\left(1-e^{-c_2\frac{t}{\sqrt{n}}}\right)\leqslant \frac{c_2te^{c_2\frac{t}{\sqrt{n}}}}{\sqrt{n}}\leqslant \frac{c_2 e^{\frac{c_2}{2(c_1+c_2+c_3)}}}{2(c_1+c_2+c_3)}\frac{\ln(n)}{\sqrt{n}}.
\end{align*}
Therefore, there exist $\alpha, C>0$ such that
\begin{equation}\label{eq:les_deux_sont_proches}
    |\nu^n_\infty(f)-f(m_+)|\leqslant \frac{C\|f\|_{\textrm{lip}}}{n^\alpha}.
\end{equation}

\textbf{$\bullet$ Conclusion.} Fix $\eta\in]\varepsilon,m_+[$, $\overline{m}_0=m_0^n\in[\eta,1]$, and set $t_1=\frac{1}{4c_1}\ln(n)$ and $t_2=\frac{n^{1/4}}{c_3}$. For $t\leqslant t_1$, \eqref{eq:non-unif-poc} yields, using \eqref{eq:gen_chaos_int_3}, that there exist $c_1,c_2, C_1>0$ such that
\[
\left|\E_{m^n_0}\left(f(m_t^n)|\tau_n>t\right)-f(\overline{m}_t)\right|\leqslant C_1\|f\|_{\textrm{lip}} e^{c_2\frac{t}{\sqrt{n}}}\left( \frac{e^{c_1t}}{\sqrt{n}}+1-e^{-c_2\frac{t}{\sqrt{n}}}\right) \leqslant \tilde C_1\frac{\|f\|_{\textrm{lip}}}{n^{\alpha_1}},
\]
for some $\tilde C_1,\alpha_1>0$. For $t_1\leqslant t \leqslant t_2$, \eqref{eq:control_int} and \eqref{eq:gen_chaos_int_3} imply  that there exist $c_3,c_4, C_2>0$ such that
\[
\left|\E_{m^n_0}\left(f(m_t^n)|\tau_n>t\right)-f(\overline{m}_t)\right|\leqslant C_2\|f\|_{\textrm{lip}}e^{c_3\frac{t}{\sqrt{n}}}\left(\frac{1}{\sqrt{n}}+e^{-c_4t}+1-e^{-c_3\frac{t}{\sqrt{n}}}\right)\leqslant \tilde C_2\frac{\|f\|_{\textrm{lip}}}{n^{\alpha_2}},
\]
for some $\tilde C_2,\alpha_1>0$. Finally, for $t\geqslant t_2$, Theorem~\ref{thm:long-time-behavior} and  \eqref{eq:les_deux_sont_proches} yield that there exist $c_5, c_6, C_3, C_4, C_5>0$ such that
\begin{align*}
    &\left|\E_{m^n_0}\left(f(m_t^n)|\tau_n>t\right)-f(\overline{m}_t)\right|\\
    &\hspace{2cm}\leqslant \left|\E_{m^n_0}\left(f(m_t^n)|\tau_n>t\right)-\nu^n_\infty(f)\right|
    +\left|\nu^n_\infty(f)-f(m_+)\right| +\left|f(m_+)-f(\overline{m}_t)\right|\\
    &\hspace{2cm}\leqslant C_3 \|f\|_{\infty} ne^{-c_5t}+\frac{C_4\|f\|_{\textrm{lip}}}{n^\alpha}+ C_5 \|f\|_{\textrm{lip}}e^{-c_6t} \\
    &\hspace{2cm}\leqslant\tilde{C}_3 \frac{\|f\|_{\infty} + \|f\|_{\textrm{lip}}}{n^{\alpha_3}},
\end{align*}
for some $\tilde{C}_3,\alpha_3>0$, which concludes the proof.

\end{proof}

Note that, in the previous proof, a better control of each individual constant and a more precise choice of $t_1$ and $t_2$ could be done in order to optimize the convergence rate in $n$ (optimize, that is, within the limits of the method we use: we do not claim, nor do we believe, that we could achieve the sharpest possible rate of convergence). We choose, for the sake of simplicity and conciseness, to not carry out this program.

\paragraph{Acknowledgements.}
L.J.  is supported by the grant n°200029-21991311 from the Swiss National Science Foundation. P.L.B. is a postdoc at IHES under the  Huawei Young Talents Program.
The two authors would like to sincerely thank Louis-Pierre Chaintron for the many enlightening discussions around this topic, particularly for the proof of Lemma~\ref{lem:estime-hn}.

\bibliographystyle{plain}
\bibliography{biblio}

\begin{thebibliography}{10}

\bibitem{AKT23}
Ashot Aleksian, Aline Kurtzmann, and Julian Tugaut.
\newblock Exit-problem for a class of non-{Markov} processes with path
  dependency.
\newblock Preprint, {arXiv}:2306.08706 [math.{PR}] (2023), 2023.

\bibitem{BCGM22}
Vincent Bansaye, Bertrand Cloez, Pierre Gabriel, and Aline Marguet.
\newblock A non-conservative {H}arris ergodic theorem.
\newblock {\em J. Lond. Math. Soc. (2)}, 106(3):2459--2510, 2022.

\bibitem{Bas19}
Kaveh Bashiri.
\newblock On the metastability in three modifications of the {I}sing model.
\newblock {\em Markov Process. Related Fields}, 25(3):483--532, 2019.

\bibitem{Bas20}
Kaveh Bashiri.
\newblock On the long-time behaviour of {M}c{K}ean-{V}lasov paths.
\newblock {\em Electron. Commun. Probab.}, 25:Paper No. 52, 14, 2020.

\bibitem{BM21}
Kaveh Bashiri and Georg Menz.
\newblock Metastability in a continuous mean-field model at low temperature and
  strong interaction.
\newblock {\em Stochastic Process. Appl.}, 134:132--173, 2021.

\bibitem{BG16}
Alessandra Bianchi and Alexandre Gaudilli\`ere.
\newblock Metastable states, quasi-stationary distributions and soft measures.
\newblock {\em Stochastic Process. Appl.}, 126(6):1622--1680, 2016.

\bibitem{BdH15}
Anton Bovier and Frank den Hollander.
\newblock {\em Metastability}, volume 351 of {\em Grundlehren der
  mathematischen Wissenschaften [Fundamental Principles of Mathematical
  Sciences]}.
\newblock Springer, Cham, 2015.
\newblock A potential-theoretic approach.

\bibitem{BEGK01}
Anton Bovier, Michael Eckhoff, V\'eronique Gayrard, and Markus Klein.
\newblock Metastability in stochastic dynamics of disordered mean-field models.
\newblock {\em Probab. Theory Related Fields}, 119(1):99--161, 2001.

\bibitem{BEGK02}
Anton Bovier, Michael Eckhoff, V\'eronique Gayrard, and Markus Klein.
\newblock Metastability and low lying spectra in reversible {M}arkov chains.
\newblock {\em Comm. Math. Phys.}, 228(2):219--255, 2002.

\bibitem{BM02}
Anton Bovier and Francesco Manzo.
\newblock Metastability in {G}lauber dynamics in the low-temperature limit:
  beyond exponential asymptotics.
\newblock {\em J. Statist. Phys.}, 107(3-4):757--779, 2002.

\bibitem{BMP21}
Anton Bovier, Saeda Marello, and Elena Pulvirenti.
\newblock Metastability for the dilute {C}urie-{W}eiss model with {G}lauber
  dynamics.
\newblock {\em Electron. J. Probab.}, 26:Paper No. 47, 38, 2021.

\bibitem{CGOV84}
Marzio Cassandro, Antonio Galves, Enzo Olivieri, and Maria~Eul\'alia Vares.
\newblock Metastable behavior of stochastic dynamics: a pathwise approach.
\newblock {\em J. Statist. Phys.}, 35(5-6):603--634, 1984.

\bibitem{CD22-1}
Louis-Pierre Chaintron and Antoine Diez.
\newblock Propagation of chaos: a review of models, methods and applications.
  {I}. {M}odels and methods.
\newblock {\em Kinet. Relat. Models}, 15(6):895--1015, 2022.

\bibitem{CD22-2}
Louis-Pierre Chaintron and Antoine Diez.
\newblock Propagation of chaos: a review of models, methods and applications.
  {II}. {A}pplications.
\newblock {\em Kinet. Relat. Models}, 15(6):1017--1173, 2022.

\bibitem{CLRW24}
Fan Chen, Yiqing Lin, Zhenjie Ren, and Songbo Wang.
\newblock Uniform-in-time propagation of chaos for kinetic mean field
  {L}angevin dynamics.
\newblock {\em Electron. J. Probab.}, 29:Paper No. 17, 43, 2024.

\bibitem{CDP12}
Francesca Collet and Paolo Dai~Pra.
\newblock The role of disorder in the dynamics of critical fluctuations of mean
  field models.
\newblock {\em Electron. J. Probab.}, 17:no. 26, 40, 2012.

\bibitem{CK17}
Francesca Collet and Richard~C. Kraaij.
\newblock Dynamical moderate deviations for the {C}urie-{W}eiss model.
\newblock {\em Stochastic Process. Appl.}, 127(9):2900--2925, 2017.

\bibitem{DgLLpN20}
Giacomo Di~Ges\`u, Tony Leli\`evre, Dorian Le~Peutrec, and Boris Nectoux.
\newblock The exit from a metastable state: concentration of the exit point
  distribution on the low energy saddle points, part 1.
\newblock {\em J. Math. Pures Appl. (9)}, 138:242--306, 2020.

\bibitem{DEGZ20}
Alain Durmus, Andreas Eberle, Arnaud Guillin, and Raphael Zimmer.
\newblock An elementary approach to uniform in time propagation of chaos.
\newblock {\em Proc. Amer. Math. Soc.}, 148(12):5387--5398, 2020.

\bibitem{Ell85}
Richard~S. Ellis.
\newblock {\em Entropy, large deviations, and statistical mechanics}, volume
  271 of {\em Grundlehren der mathematischen Wissenschaften [Fundamental
  Principles of Mathematical Sciences]}.
\newblock Springer-Verlag, New York, 1985.

\bibitem{EN78}
Richard~S. Ellis and Charles~M. Newman.
\newblock The statistics of {C}urie-{W}eiss models.
\newblock {\em J. Statist. Phys.}, 19(2):149--161, 1978.

\bibitem{FRS21}
Gr{\'e}goire Ferr{\'e}, Mathias Rousset, and Gabriel Stoltz.
\newblock More on the long time stability of {Feynman}-{Kac} semigroups.
\newblock {\em Stoch. Partial Differ. Equ., Anal. Comput.}, 9(3):630--673,
  2021.

\bibitem{FV17}
Sacha Friedli and Yvan Velenik.
\newblock {\em Statistical Mechanics of Lattice Systems: A Concrete
  Mathematical Introduction}.
\newblock Cambridge University Press, 2017.

\bibitem{GLBM22}
Arnaud Guillin, Pierre Le~Bris, and Pierre Monmarch\'e.
\newblock Convergence rates for the {V}lasov-{F}okker-{P}lanck equation and
  uniform in time propagation of chaos in non convex cases.
\newblock {\em Electron. J. Probab.}, 27:Paper No. 124, 44, 2022.

\bibitem{GM21}
Arnaud Guillin and Pierre Monmarch\'e.
\newblock Uniform long-time and propagation of chaos estimates for mean field
  kinetic particles in non-convex landscapes.
\newblock {\em J. Stat. Phys.}, 185(2):Paper No. 15, 20, 2021.

\bibitem{HM11}
Martin Hairer and Jonathan~C. Mattingly.
\newblock Yet another look at {H}arris' ergodic theorem for {M}arkov chains.
\newblock In {\em Seminar on {S}tochastic {A}nalysis, {R}andom {F}ields and
  {A}pplications {VI}}, volume~63 of {\em Progr. Probab.}, pages 109--117.
  Birkh\"auser/Springer Basel AG, Basel, 2011.

\bibitem{JM22}
Lucas Journel and Pierre Monmarch{\'e}.
\newblock Convergence of a particle approximation for the quasi-stationary
  distribution of a diffusion process: uniform estimates in a compact soft
  case.
\newblock {\em ESAIM, Probab. Stat.}, 26:1--25, 2022.

\bibitem{JM25}
Lucas Journel and Pierre Monmarch{\'e}.
\newblock Uniform convergence of the {Fleming}-{Viot} process in a hard killing
  metastable case.
\newblock Preprint, {arXiv}:2207.02030 [math.{PR}] (2022), 2022.

\bibitem{Kac56}
Mark Kac.
\newblock Foundations of kinetic theory.
\newblock In {\em Proceedings of the {T}hird {B}erkeley {S}ymposium on
  {M}athematical {S}tatistics and {P}robability, 1954--1955, vol. {III}}, pages
  171--197. University of California Press, Berkeley-Los Angeles, Calif., 1956.

\bibitem{Kal02}
Olav Kallenberg.
\newblock {\em Foundations of modern probability}.
\newblock Probability and its Applications (New York). Springer-Verlag, New
  York, second edition, 2002.

\bibitem{Kra16}
Richard Kraaij.
\newblock Large deviations for finite state {M}arkov jump processes with
  mean-field interaction via the comparison principle for an associated
  {H}amilton-{J}acobi equation.
\newblock {\em J. Stat. Phys.}, 164(2):321--345, 2016.

\bibitem{LLF23}
Daniel Lacker and Luc Le~Flem.
\newblock Sharp uniform-in-time propagation of chaos.
\newblock {\em Probab. Theory Related Fields}, 187(1-2):443--480, 2023.

\bibitem{LMS21}
Claudio Landim, Diego Marcondes, and Insuk Seo.
\newblock A resolvent approach to metastability.
\newblock {\em J. Eur. Math. Soc.}, 2023, published online first.

\bibitem{DgLLpN22}
Tony Leli\`evre, Dorian Le~Peutrec, and Boris Nectoux.
\newblock The exit from a metastable state: concentration of the exit point
  distribution on the low energy saddle points, part 2.
\newblock {\em Stoch. Partial Differ. Equ. Anal. Comput.}, 10(1):317--357,
  2022.

\bibitem{LLP10}
David~A. Levin, Malwina~J. Luczak, and Yuval Peres.
\newblock Glauber dynamics for the mean-field {I}sing model: cut-off, critical
  power law, and metastability.
\newblock {\em Probab. Theory Related Fields}, 146(1-2):223--265, 2010.

\bibitem{McK66}
Henry~P. McKean, Jr.
\newblock A class of {M}arkov processes associated with nonlinear parabolic
  equations.
\newblock {\em Proc. Nat. Acad. Sci. U.S.A.}, 56:1907--1911, 1966.

\bibitem{MV12}
Sylvie M\'el\'eard and Denis Villemonais.
\newblock Quasi-stationary distributions and population processes.
\newblock {\em Probab. Surv.}, 9:340--410, 2012.

\bibitem{Mon23}
Pierre Monmarch{\'e}.
\newblock Elementary coupling approach for non-linear perturbation of {Markov}
  processes with mean-field jump mechanisms and related problems.
\newblock {\em ESAIM, Probab. Stat.}, 27:278--323, 2023.

\bibitem{MR24}
Pierre Monmarch{\'e} and Julien Reygner.
\newblock Local convergence rates for {Wasserstein} gradient flows and
  {McKean}-{Vlasov} equations with multiple stationary solutions.
\newblock Preprint, {arXiv}:2404.15725 [math.{AP}] (2024), 2024.

\bibitem{Mel96}
Sylvie Méléard.
\newblock Asymptotic behaviour of some interacting particle systems;
  {M}c{K}ean-{V}lasov and {B}oltzmann models.
\newblock In {\em Probabilistic models for nonlinear partial differential
  equations ({M}ontecatini {T}erme, 1995)}, volume 1627 of {\em Lecture Notes
  in Math.}, pages 42--95. Springer, Berlin, 1996.

\bibitem{Sch24}
Katharina Schuh.
\newblock Global contractivity for {L}angevin dynamics with
  distribution-dependent forces and uniform in time propagation of chaos.
\newblock {\em Ann. Inst. Henri Poincar\'e{} Probab. Stat.}, 60(2):753--789,
  2024.

\bibitem{Szn91}
Alain-Sol Sznitman.
\newblock Topics in propagation of chaos.
\newblock In {\em \'{E}cole d'\'{E}t\'{e} de {P}robabilit\'{e}s de
  {S}aint-{F}lour {XIX}---1989}, volume 1464 of {\em Lecture Notes in Math.},
  pages 165--251. Springer, Berlin, 1991.

\bibitem{Vil09}
C\'edric Villani.
\newblock {\em Optimal transport}, volume 338 of {\em Grundlehren der
  mathematischen Wissenschaften [Fundamental Principles of Mathematical
  Sciences]}.
\newblock Springer-Verlag, Berlin, 2009.
\newblock Old and new.

\bibitem{Z25}
Shao-Qin Zhang.
\newblock Local convergence near equilibria for distribution dependent {SDEs}.
\newblock Preprint, {arXiv}:2501.04313 [math.{PR}] (2025), 2025.

\end{thebibliography}
\end{document}